\theoremstyle{definition}
\newtheorem{Def}{Definition}[section]
\theoremstyle{plain}
\newtheorem{prop}[Def]{Proposition}
\theoremstyle{plain}
\newtheorem{cor}[Def]{Corollary}
\theoremstyle{plain}
\newtheorem{thm}[Def]{Theorem}
\theoremstyle{plain}
\newtheorem{lemma}[Def]{Lemma}
\theoremstyle{remark}
\newtheorem{oss}[Def]{Remark}
\theoremstyle{remark}
\newtheorem{esempio}[Def]{Example}
\theoremstyle{remark}
\theoremstyle{definition}
\theoremstyle{plain}
\theoremstyle{plain}
\theoremstyle{plain}
\theoremstyle{plain}
\theoremstyle{remark}
\theoremstyle{remark}
\theoremstyle{plain}
\newtheorem{propintro}{Proposition}
\theoremstyle{plain}
\newtheorem{thmintro}[propintro]{Theorem}
\DeclareMathOperator{\Hom}{Hom}
\DeclareMathOperator{\Hh}{H}
\newcommand{\id}{\text{id}}
\newcommand{\ide}{\emph{id}}
\newcommand{\pico}[1]{\text{Pic}(\overline{#1})}
\newcommand{\ev}[0]{\text{ev}}
\newcommand{\Addresses}{{
  \bigskip
  \footnotesize

  \textsc{Dipartimento di Matematica, Universit\`a di Pisa, Italy}\par\nopagebreak
  \textit{E-mail address}: \texttt{mattia.pirani@phd.unipi.it}
}}
\title{Non-Surjectivity of the Universal Torsor Evaluation Map for Homogeneous Spaces}
\date{}
\author{Mattia Pirani}
\begin{document}

\maketitle

\vspace{-0.7cm}

\begin{abstract}
Let $K$ be a field of characteristic zero, let $G$ be a connected linear $K$-algebraic group, and let $H$ be a connected closed subgroup of $G$. Let $X_c$ be a smooth compactification of $X=G/H$, and let $Y\overset{}{\longrightarrow}X_c$ be the universal $S$-torsor with trivial fibre over the class of the identity of $G$. Colliot-Thélène and Kunyavski\u{\i} have shown that $S$ is a flasque torus, and that the evaluation map $X_c(K)\overset{}{\longrightarrow} \Hh^1(K,S)$, induced by the universal torsor, is surjective when the field $K$ is `good'; and the same is true when we restrict the evaluation map to the $K$-points of $X$. In this article, we establish that in cases where the field is not `good', surjectivity may fail when the domain is $X(K)$. We provide two concrete examples: one over a field of cohomological dimension $2$ and the other over an arithmetic field, such as $\mathbb Q_7((t))$.
\end{abstract}

\vspace{0.5cm}

\section*{Introduction}

Let $K$ be a field of characteristic zero, and let $X$ be a (smooth) $K$-algebraic variety, which means a geometrically integral, separated scheme of finite type over $K$. In \cite[Section 14]{MR833513}, Manin defined an equivalence relation on the $K$-points of $X$, called $R$-equivalence. Given two points $x, y \in X(K)$, we say that $x$ and $y$ are $R$-equivalent if there exists a sequence of points $x=x_1,x_2,\dots, x_{n-1}, x_n=y \in X(K) $ such that for all $i=1,\dots, n-1$, there is rational map
\begin{equation*}
        \begin{tikzpicture}[baseline= (a).base]
        \node[scale=1] (a) at (0,0){
        \begin{tikzcd}
       \mathbb P^1_k\arrow[r, dashed, "\phi_i"] & X 
       \end{tikzcd}};
        \end{tikzpicture}
\end{equation*}
such that $\text{Im}\left(\phi_i(\mathbb A^1_k(k))\right)\ni x_i, x_{i+1}$. One way to study the set $X(K)/R$ of classes of $R$-equivalence of $X$ is to find a suitable $S$-torsor $Y{\longrightarrow} X$ with $S$ a flasque torus. Indeed, the map induced by evaluation on $K$-points factors through $R$-equivalence:
\begin{equation*}
        \begin{tikzpicture}[baseline= (a).base]
        \node[scale=1] (a) at (0,0){
        \begin{tikzcd}
       X(K)/R \arrow[r, "\ev"] & \Hh^1(K,S) 
       \end{tikzcd}};
        \end{tikzpicture}
\end{equation*}
according to \cite[Corollary 2.6]{MR878473}. If we can prove that the map is injective and/or surjective, then we obtain an upper and/or lower bound on the number of $R$-equivalence classes. Furthermore, the injectivity is particularly interesting when the first Galois cohomology group $\Hh^1(K,S)$ is finite, for example when $K$ is a number field or a $l$-adic field (see \cite[Section 3.2]{MR2034644}). 

We fix a smooth compactification $X_c$ of $X$, whose existence is guaranteed by a classical theorem of Hironaka (see \cite{MR0199184}). Since $S$ is flasque, the $S$-torsor extends to $Y'{\longrightarrow} X_c$, by \cite[Theorem 2.2]{MR878473}, therefore there is the following commutative diagram:
\begin{equation*}
        \begin{tikzpicture}[baseline= (a).base]
        \node[scale=1] (a) at (0,0){
        \begin{tikzcd}
       X(K)/R \arrow[r, "\ev"]\arrow[d, "i"'] & \Hh^1(K,S). \\
       X_c(K)/R \arrow[ru, "\ev_c"']
       \end{tikzcd}};
        \end{tikzpicture}
\end{equation*}
The question about injectivity and/or surjectivity can be extended to all these maps. 

Colliot-Thélène and Sansuc studied this problem in the case where $X$ is an algebraic torus $T$. The key ingredient for their proof is the existence of flasque resolutions:
\begin{equation*}
        \begin{tikzpicture}[baseline= (a).base]
        \node[scale=1] (a) at (0,0){
        \begin{tikzcd}
       1 \arrow[r] & S\arrow[r] & E\arrow[r] & T\arrow[r] & 1,
       \end{tikzcd}};
        \end{tikzpicture}
\end{equation*}
where $S$ is a flasque torus and $E$ is a quasi-trivial one (see \cite[Lemma 1.1]{MR0364203} or \cite[Lemma 3]{requiv} for the existence). Then, the connecting morphism in Galois cohomology, induces a commutative diagram like the one above. They proved that the evaluation map $\text{ev}$ is an isomorphism in \cite[Theorem 2]{requiv}, and that the map $i$ is bijective in \cite[Proposition 13]{requiv}.

Later, these results were generalised for a connected linear $K$-algebraic group $G$. Firstly, in \cite[Proposition 1.8]{MR2067129}, Gille proved that $i$ is a bijection. Then, Colliot-Thélène extended flasque resolutions to connected linear $k$-algebraic groups, in \cite[Proposition-Definition 3.1]{MR2404747}. He defined a flasque resolution as an exact sequence 
\begin{equation*}
        \begin{tikzpicture}[baseline= (a).base]
        \node[scale=1] (a) at (0,0){
        \begin{tikzcd}
       1 \arrow[r] & S\arrow[r] & \Tilde{G}\arrow[r] & G\arrow[r] & 1
       \end{tikzcd}};
        \end{tikzpicture}
\end{equation*}
where $S$ is a flasque torus and $\Tilde{G}$ is a quasi-trivial, linear $K$-algebraic group, which means that the maximal toric quotient $\Tilde{G}^{\text{tor}}$ is quasi-trivial and the semi-simple part of $\Tilde{G}/\Tilde{G}^u$ is simply connected. He called a field $K$ `good' if it satisfies the following properties:
\begin{itemize}
    \item the cohomological dimension of $K$ is at most $2$,
    \item for all field extensions of $K$ index and period coincide for of all central simple algebras,
    \item the set $\Hh^1(K,G')$ is trivial for all quasi-trivial, linear $K$-algebraic groups $G'$.
\end{itemize}
Some examples of `good' fields are totally imaginary number fields, $l$-adic fields and $\mathbb C((x,y))$. Colliot-Thélène proved, in \cite[Theorem 8.4]{MR2404747}, that the evaluation map $\text{ev}$ is bijective when the field $K$ is `good'. 

Now consider the case of a homogeneous space $X=G/H$ under a connected linear $K$-algebraic group $G$ with connected stabilizer $H$. In \cite[Theorem 5.1]{MR2237268}, Colliot-Thélène and Kunyavski\u{\i} established that $\pico{X_c}$ is a flasque torus, extending previous results on algebraic tori, due to Voskresenski\u{\i} (see \cite[Section 4.6]{MR1634406}), and on connected linear $K$-algebraic groups, due to Borovoi and Kunyavski\u{\i} (see \cite[Theorem 3.2]{MR2054399}). Let $S$ be the flasque torus with character module $\pico{X_c}$. By \cite[Section 2.0 and Proposition 2.2.8]{sketch}, there exists a universal $S$-torsor $Y'{\longrightarrow}X_c$ with trivial fibre over the class of the identity, denoted as $[1_G]$. This torsor defines an evaluation map $\text{ev}_c$ like the one above. In \cite[Theorem 6.1]{MR2237268}, Colliot-Thélène and Kunyavski\u{\i} proved that this evaluation map $\text{ev}_c$ is surjective when the field $K$ is `good'. Following their proof, we can deduce that a `stronger' version of their theorem holds, in the sense that, under the same assumption of `good' field for $K$, even the evaluation map $\text{ev}$ is surjective.

The main results of this paper consist of constructing examples of homogeneous spaces for which the evaluation map $\text{ev}$ is non-surjective.

In Section \ref{dim2} we proceed with the construction of the first example. We do this over a field $K$ of cohomological dimension $2$. Furthermore, for this homogeneous space, the map is injective:

\begin{thmintro}
\label{thm1}
There exist a field $K$ of cohomological dimension $2$ that admits a central simple algebra of index at least $p^2$ and period $p$, and a homogeneous space $X=G/H$ defined over $K$ which satisfies the following property: if $Y{\longrightarrow} X$ is the restriction of the universal $S$-torsor over $X_c$ with trivial fibre over $[1_G]$, then the evaluation map 
$$X(K)/R\overset{\emph{ev}}{\longrightarrow} \Hh^1(k,S)$$
is injective, but not surjective. 
\end{thmintro}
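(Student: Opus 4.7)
The plan is to exploit the one feature distinguishing a cohomological dimension~$2$ field from a ``good'' one: the existence of a central simple algebra whose period is strictly smaller than its index. The strategy is to produce a homogeneous space $X = G/H$ whose flasque torus $S$ receives a class encoding such an algebra $A$, and then to show that the image of $\ev$ necessarily avoids this class, while remaining injective.

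First, I would fix the base field. Take $K$ of cohomological dimension~$\leq 2$ admitting a central simple algebra $A$ of period $p$ and index~$\geq p^2$; classical iterated function field or power series constructions (Merkurjev-type) provide many such~$K$, with $A$ given by an explicit symbol algebra. Choose a convenient étale $K$-algebra $L$ associated to $A$, for instance a splitting field of a cyclic presentation. For the space, a natural template is to let $H$ be a torus built from $L/K$ (say a norm-one torus, or a variant whose flasque resolution has $\Hh^1$ receiving $[A]$), embed $H$ into a quasi-trivial group $G$ such as $G = R_{L/K}\mathbb{G}_m$, and set $X = G/H$. Applying Theorem~5.1 of \cite{MR2237268}, the character module of~$S$ is $\pico{X_c}$, which is readable off a flasque resolution of $H$; the associated connecting homomorphism produces a map $\Hh^1(K,S) \to \mathrm{Br}(K)$ through which the class of $A$ can be tracked.

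For non-surjectivity, I would describe the image of $\ev$ explicitly via the long exact cohomology sequence $G(K) \to X(K) \to \Hh^1(K,H)$ and the universal torsor construction: a $K$-point $x \in X(K)$ lifts through $G$ by a Galois-descent computation, so the resulting class in $\Hh^1(K,S)$ maps to a Brauer class whose index is controlled (a multiple of the period $p$ dictated by the shape of the flasque resolution). Consequently the class associated to $[A]$, which has index $p^2$, cannot lie in the image, producing the failure of surjectivity. Injectivity would then follow by factoring $\ev$ through Gille's bijection $i \colon X(K)/R \to X_c(K)/R$ from~\cite{MR2067129} and verifying directly, for the explicit $X$ at hand, that $\ev_c$ separates $R$-equivalence classes, which in this set-up reduces to a concrete computation involving reduced norms from $A$.

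The main obstacle is the joint calibration of $H$, $G$ and the compactification: one needs $\Hh^1(K,S)$ large enough to contain a class corresponding to $[A]$, yet the image of $\ev$ small enough to miss that class while still being injective on $X(K)/R$. Striking this balance — choosing $H$ small enough to preserve injectivity, large enough for the flasque invariant to detect the period-index gap, and ensuring that the smooth compactification $X_c$ behaves well under the Colliot-Thélène--Kunyavski\u{\i} computation of its Picard group — is the delicate part, and will dictate the exact choice of $L$, $T$, and the embedding $H \hookrightarrow G$.
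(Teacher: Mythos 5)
There is a genuine gap, and it lies exactly where your construction makes its key choice: you take $H$ to be a torus (a norm-one torus or a variant built from $L/K$). With a torus stabilizer this strategy cannot produce non-surjectivity. Indeed, if $H$ is a torus and $G$ is quasi-trivial with $\Hh^1(K,G)=1$ (which holds for $R_{L/K}\mathbb G_m$, $GL_n$, $SL_n$), the exact sequence $G(K)\to X(K)\to \Hh^1(K,H)\to \Hh^1(K,G)$ shows that $X(K)\to \Hh^1(K,H)$ is onto; since $H=H^{\mathrm{tor}}$, and the universal torsor's evaluation map factors as $X(K)\to \Hh^1(K,H^{\mathrm{tor}})\to \Hh^1(K,S_0)$ with $S_0$ a flasque torus containing $H^{\mathrm{tor}}$ with quasi-trivial cokernel $E_0$, the second arrow is also onto because $\Hh^1(K,E_0)=1$. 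So $\ev$ is surjective for every torus $H$ — consistent with Colliot-Th\'el\`ene--Sansuc, who prove $\ev$ is even bijective for tori over an arbitrary field. The mechanism that blocks surjectivity in the paper is precisely the non-commutativity of $H$: one takes $H=(SL_p\times_K R)/\mu_p$ with $\mu_p$ embedded anti-diagonally, where $1\to\mu_p\to R\to F\to 1$ has flasque toric quotient $F=H^{\mathrm{tor}}$. Then any class coming from $\Hh^1(K,H)$ maps, under the connecting map $\Hh^1(K,F)\to\Hh^2(K,\mu_p)$, into (up to sign) the image of $\Hh^1(K,PSL_p)\to\Hh^2(K,\mu_p)$, i.e.\ into classes of index dividing $p$; a class of index $>p$ in the image of $\Hh^1(K,F)\to\Hh^2(K,\mu_p)$ is therefore missed. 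Your sentence ``the resulting class in $\Hh^1(K,S)$ maps to a Brauer class whose index is controlled'' has no source of control in your set-up; the control comes from the $SL_p$/$PSL_p$ part of $H$, which your $H$ lacks.

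A second, related gap: it is not enough to know that $K$ has cohomological dimension $2$ and carries an algebra $A$ of period $p$ and index $\geq p^2$. One must produce a \emph{flasque} torus $F$ and an extension $1\to\mu_p\to R\to F\to 1$ whose connecting map $\Hh^1(K,F)\to\Hh^2(K,\mu_p)$ actually contains such a class in its image; this is the hard part and cannot be arranged by simply tuning the \'etale algebra $L$. The paper does it by constructing a flasque $\Gamma$-lattice (for $\Gamma=(\mathbb Z/p\mathbb Z)^3$) with a degree-one class surviving all index-$p$ subgroups, invoking the Karpenko--Merkurjev Index Theorem for a \emph{generic} $F$-torsor to force index $>p$, and only then passing to a field of cohomological dimension $2$ by the Ducros construction together with Panin's theorem, which preserves the index of the relevant division algebra. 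Finally, injectivity in the paper is not obtained by a reduced-norm computation on $X_c$: it is automatic once $G$ is chosen with $G(K)/R$ trivial (e.g.\ $G=GL_n$), because $H$ is an extension of a flasque torus by $SL_p$.
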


The condition that over $K$ there is a central simple algebra of order $p$, for some prime $p$, and index greater than $p^2$ is crucial. For the construction of such an algebra, we use the `Index' theorem of Karpenko and Merkurjev (see \cite[Theorem 4.4]{MR3055773}). 

This result indicates that the `stronger' version of the theorem of Colliot-Thélène and Kunyavski\u{\i} cannot be extended to all fields of cohomological dimension $2$. The question about the non-surjectivity of $\text{ev}_c$ over a field of cohomological dimension $2$ it is still open, to the best of the author's knowledge. 

The problem with the previous example is that it is constructed over a very large field, indeed it is some kind of Merkurjev field (see Subsection \ref{sub3}). Therefore, in Section \ref{localt}, we construct an example defined over an arithmetic field of cohomological dimension $3$, such as $\mathbb Q_7((t))$. The main result will be the following:

\begin{thmintro}
\label{thm2}
Let $k$ be a $l$-adic field that contains a primitive $p$-th root of unity and does not contain a primitive $p^2$-th root of unity, for some prime $p\neq 2$. Suppose that $K=k((t))$. Then, there exists a homogeneous space $X=G/H$, defined over the field $K$, which satisfies the following property:
if $Y\overset{}{\longrightarrow} X$ is the restriction of the universal $S$-torsor over $X_c$ with trivial fibre over $[1_G]$, then
\begin{itemize}
    \item there is an isomorphism $\Hh^1(K,S)\simeq \mathbb Z/ p \mathbb Z$,
    \item the evaluation map
    $$X(K)/R\overset{\emph{ev}}{\longrightarrow} \Hh^1(K,S)$$
    is zero.
\end{itemize}
\end{thmintro}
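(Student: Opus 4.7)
The plan is to adapt the Merkurjev-type construction underlying Theorem~\ref{thm1} to this arithmetic setting, exploiting the fact that $K=k((t))$ still carries a central simple algebra of period $p$ and index $p^2$. Since $k$ is $l$-adic with $\mu_p\subset k^*$ but $\mu_{p^2}\not\subset k^*$, one has $|\text{Br}(k)[p]|=p$ and $|k^*/k^{*p}|=p^2$; the residue decomposition of $\text{Br}(K)[p]$ along $t=0$ then yields an explicit such algebra $A/K$, built for instance as a tensor product of a cyclic algebra pulled back from $k$ with a ramified symbol $(a,t)_p$. This is the arithmetic input that plays the role of the Merkurjev-field hypothesis in Section~\ref{dim2}.

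With $A$ in hand, I would construct $X=G/H$ on exactly the template of Theorem~\ref{thm1}: take $G$ to involve $\text{SL}_1(A)$ (or $\text{SL}_n(A)$) together with auxiliary quasi-trivial factors, and choose a toral stabiliser $H$ so that the flasque torus $S$ with character module $\pico{X_c}$ has a Galois action whose cohomology can be computed explicitly. The computation of $\Hh^1(K,S)$ then proceeds by expressing $S$ via the character-module sequence associated with $X=G/H$, translating $\Hh^1(K,S)$ into cup-products in $\Hh^*(K,\mu_p)$, and using the arithmetic of $K=k((t))$ — in particular the residue splitting for torus cohomology over a henselian discretely valued field — to identify the resulting subquotient as $\mathbb{Z}/p\mathbb{Z}$.

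For the vanishing of $\text{ev}$, I would argue that every $x\in X(K)$ has zero image in $\Hh^1(K,S)$. Writing $x$ as the class of an $H$-torsor $\tau$ obtained by lifting $x$ through $G\to X$ over $\bar K$, the evaluation $\text{ev}(x)$ can be expressed as a cup-product/coboundary involving $\tau$ and the cocycle defining the universal torsor $Y\to X$. Using the henselian structure of $K=k((t))$, the residue map splits $\Hh^1(K,S)$ into ramified and unramified components along $t=0$; the ramified contribution of $\text{ev}(x)$ should vanish because $\tau$ factors through the toral stabiliser $H$ whose relevant symbols are killed on restriction to $k$, while the unramified component lies in a subgroup of $\Hh^1(K,S)$ that is already trivial because $k$ has cohomological dimension~$2$ and $H^1(k,S)$ is controlled by classes that the specialisation of $Y$ sees as trivial.

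The main obstacle is exactly this last step: upgrading a non-surjectivity statement (as in Theorem~\ref{thm1}) to the strictly stronger identically-zero statement of Theorem~\ref{thm2}. Non-surjectivity only requires producing one missed class, whereas vanishing requires uniform control over all of $X(K)$. This depends crucially on two arithmetic ingredients that are \emph{not} available over a Merkurjev-type field: the residue decomposition of $\Hh^1(K,S)$ along $t=0$, and the cohomological smallness of the $l$-adic field $k$ itself. Although the algebraic setup mirrors that of Theorem~\ref{thm1}, the cohomological analysis required to kill $\text{ev}$ uniformly is therefore of a genuinely different nature.
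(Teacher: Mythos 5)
Your proposal does not contain a proof of the key assertion, and you say so yourself: the passage from ``there is a class not hit by $\mathrm{ev}$'' to ``$\mathrm{ev}$ is identically zero'' is exactly the content of Theorem~\ref{thm2}, and your sketch of it (splitting $\Hh^1(K,S)$ into ramified and unramified parts, claiming the ramified part of $\mathrm{ev}(x)$ dies because the stabiliser is toral and the unramified part is ``controlled by classes the specialisation sees as trivial'') is a heuristic, not an argument. Moreover the setup you choose works against you. If $H$ is a torus, then $\Hh^1(K,H)=\Hh^1(K,H^{\mathrm{tor}})$ and the image of $X(K)\rightarrow \Hh^1(K,H)$ is the kernel of $\Hh^1(K,H)\rightarrow \Hh^1(K,G)$; placing $\mathrm{SL}_1(A)$ inside $G$ gives you no general reason for that kernel to vanish, and the index-$p^2$ mechanism of Theorem~\ref{thm1} (via Proposition~\ref{main1}) can only exclude a single class from the image, i.e. it is structurally a non-surjectivity tool. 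The arithmetic input is also mis-aimed: in the actual proof the relevant algebra is the cyclic algebra $(\chi,t)$ of index $p$; what matters is not a large index but the non-vanishing of its image under the Bockstein map into $\Hh^3(K,\mu_p)$.

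The paper's route is genuinely different and is built to give uniform vanishing. One takes a non-toral stabiliser $H$ fitting into $1\rightarrow \mu_p\rightarrow (\mathrm{SL}_{p^2}/\mu_p)\times_K R\rightarrow H\rightarrow 1$ (Proposition~\ref{extensione}), so that $H^{\mathrm{sc}}=\mathrm{SL}_{p^2}$, $\mu=\mu_p$, and the sequence $1\rightarrow\mu\rightarrow Z^{(\mathrm{sc})}\rightarrow Z^{(\mathrm{ss})}\rightarrow 1$ is the Bockstein sequence $\mu_p\rightarrow\mu_{p^2}\rightarrow\mu_p$. By Borovoi's abelianization (Lemma~\ref{fattorizza}) every element in the image of $\Hh^1(K,H)\rightarrow \Hh^1(K,H^{\mathrm{tor}})$ factors through $\Hh^1_{\mathrm{ab}}(K,H)$ and therefore lies in the kernel of the composite coboundary $\Hh^1(K,H^{\mathrm{tor}})\rightarrow \Hh^2(K,\mu_p)\rightarrow \Hh^3(K,\mu_p)$ (Proposition~\ref{commuta}). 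Kato's duality for the $2$-local field $K=k((t))$, together with $\mu_p\subset k$ and $\mu_{p^2}\not\subset k$, makes the Bockstein map $\Hh^2(K,\mu_p)\rightarrow\Hh^3(K,\mu_p)$ nonzero (Lemma~\ref{pulizia}); the flasque torus $F=\bigl(\prod_{i=1}^{p+1}R_{K_i/K}\mathbb G_m\bigr)/\mu_p$ attached to the bicyclic extension $k'((t^{1/p}))/K$ satisfies $\Hh^1(K,F)\simeq\mathbb Z/p\mathbb Z$, generated by a class mapping to $(\chi,t)$, whose Bockstein image is nonzero (Propositions~\ref{piatto}, \ref{pz}, \ref{quasifine}). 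Since $\Hh^1(K,H^{\mathrm{tor}})$ is cyclic of prime order and the composite is non-trivial, its kernel is trivial, so the whole image of $\Hh^1(K,H)$, and hence of $X(K)$, is zero --- this is the uniform control your sketch identifies as missing but does not supply. To repair your proposal you would need to import this abelianization-plus-$\Hh^3$ obstruction (or an equivalent mechanism), not the index-$p^2$ argument of Section~\ref{dim2}.
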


First of all we note that, for all $p\neq 2$, there exists a $l$-adic field that satisfies the condition on the roots of unity. For instance, if $k=\mathbb Q_l$ for some prime $l$, then the polynomial $x^n-1$, where $l$ does not divide $n$, completely factors over $\mathbb Q_l$ if and only if completely factors over $\mathbb F_l$ by Hensel's Lemma. The condition that $\mathbb Q_l$ contains a primitive $p$-th root of unity but not a primitive $p^2$-th root of unity translates to the following one: $p$ divides $l-1$ but $p^2$ does not divide $l- 1$. There is always an infinite number of primes $l$ that satisfy both conditions. Indeed, by Strong Dirichlet's Theorem (see \cite[Chapter VI, Section 4.1]{Serre:1993}) the density of such primes inside all prime numbers is $1/p$. 

A natural question arising from Theorem \ref{thm2} is whether $X(K)$ has only one class of $R$-equivalence. It should be noted that $X$ is not a $K$-stably rational variety (otherwise $\Hh^1(K,S)$ would be trivial). 

\subsection*{Notations}

The letters $k$ and $K$ will denote two fields of characteristic zero. We always assume that $K$ is an extension of $k$. We fix an algebraic closure $\overline{k}$ of $k$ and we denote by $\Gamma_k$ the absolute Galois group of $k$. We do the same with $K$. The letter $p$ denotes a fixed prime. 

Let $M$ be a group of multiplicative type over $k$. We denote by $\widehat M$ the $\Gamma_k$-module of characters and by $\widehat M(k)$ the group of characters defined over $k$. 

Let $\Gamma$ be a finite group, and let $\widehat F$ be a $\Gamma$-lattice. We denote the dual lattice by $\widehat F^0$. 

\subsection*{Acknowledgements}

I warmly thank my advisors Philippe Gille and Tamás Szamuely for their support and for several and helpful discussions. I thank Cyril Demarche for leading me to consider abelian Galois cohomology for the construction of the second example. I also thank Mathieu Florence for useful suggestions and comments. The author is member of the Italian GNSAGA-INDAM.

\section{Preliminary results}
\label{primo}

In this section, we will investigate some criteria for the construction of the examples. It is important to highlight that when constructing the first example, there is no requirement for Subsection  \ref{catabe}.

\subsection{A first criterion for non-surjectivity}
\label{sezionefla}

In this subsection, we start by recalling some notions about flasque tori and flasque quasi-resolutions. At the end, we show that the construction of the homogeneous space can be reduced to the choice of a suitable stabilizer. 

The following two definitions are well-known, see \cite[Section 1 and Section 2]{requiv} for more details. 

\begin{Def}
A $\Gamma$-lattice $\widehat F$ is flasque (resp. coflasque) if $\Hh^1(\Lambda, \widehat F^0)$ (resp. $\Hh^1(\Lambda,\widehat F)$) is trivial for all subgroups $\Lambda<\Gamma$. A $k$-algebraic torus $T$ is flasque if $\widehat T$ is a $(\Gamma_k/\Gamma_{k'})$-flasque module, for some finite Galois extension $k\subset k'$.
\end{Def}

\begin{Def}
A $\Gamma$-lattice $\widehat F$ is a permutation module if it admits a $\Gamma$-invariant $\mathbb Z$-basis. A $k$-algebraic torus $T$ is quasi-trivial if $\widehat T$ is a $(\Gamma_k/\Gamma_{k'})$-permutation module, for some finite Galois extension $k\subset k'$.
\end{Def}

\begin{oss}
A $k$-algebraic torus $T$ is quasi-trivial if and only if it is isomorphic to a (finite) product of Weil restrictions for finite separable field extensions of $\mathbb G_m$. All permutation modules are both flasque and coflasque by Shapiro's Lemma.
\end{oss}

\begin{oss}
The base change of a flasque, coflasque or quasi-trivial torus is, respectively, flasque, coflasque or quasi-trivial. It could happen that the base change of a flasque or coflasque torus is quasi-trivial, for example when passing to the algebraic closure $\overline{k}$. 
\end{oss}

In \cite[Definition 1.1]{MR2404747}, Colliot-Thélène extended the definition of quasi-trivial tori to algebraic varieties:

\begin{Def}
\label{after}
Let $Y$ be a $k$-algebraic variety. We say that $Y$ is $\mathbb G_m$-quasi-trivial if the following conditions are satisfied:
\begin{itemize}
    \item the group $\text{Pic}(Y\times_k\overline{k})$  is trivial,
    \item the $\Gamma_k$-module $\overline{k}[Y]^*/\overline{k}^*$ is permutation.
\end{itemize}
\end{Def}

We consider a homogeneous space $X=G/H$ defined over a field extension $K$ of $k$. We assume that $H$ is a connected, reductive, linear $K$-algebraic group and that $G$ is a $\mathbb G_m$-quasi-trivial, linear $K$-algebraic group. We recall the following from \cite[Definition 4.5]{finto}:

\begin{Def}
Fix a point $x\in X(K)$. Let $Y\overset{}{\longrightarrow}X$ be a $S$-torsor, where $S$ is a flasque torus. We say that $Y\overset{}{\longrightarrow}X$ is a flasque quasi-resolution of $(X,x)$ if the following properties are satisfied:
\begin{itemize}
    \item $Y$ is a $\mathbb G_m$-quasi-trivial $K$-algebraic variety,
    \item the fibre over $x$ is trivial.
\end{itemize}
\end{Def}

Denote by $[1_G]$ the class of the identity of $G$ in $X$. We proceed with the construction of a specific flasque quasi-resolution of $(X,[1_G])$. Let $H^{\text{tor}}$ be the maximal toric quotient of $H$, and let 
 \begin{equation*}
        \begin{tikzpicture}[baseline= (a).base]
        \node[scale=1] (a) at (0,0){
        \begin{tikzcd}
       1 \arrow[r]& H^{\text{tor}}\arrow[r] & S_0\arrow[r] & E_0\arrow[r] & 1
       \end{tikzcd}};
        \end{tikzpicture}
\end{equation*}
be a flasque resolution of $H^{\text{tor}}$, which means that $S_0$ is a flasque torus and $E_0$ is a quasi-trivial one (see \cite[Lemma 0.6]{MR878473} for the existence of such an extension). We can take the pushforward of the $H$-torsor $G\overset{}{\longrightarrow} X$ by the composition morphism $H\rightarrow H^{\text{tor}}\rightarrow S_0$ to obtain a $S_0$-torsor $Y \overset{}{\longrightarrow} X$. In \cite[Proposition 7.4]{finto}, the author proved the following:

\begin{prop}
\label{esiste}
The $S_0$-torsor $Y\overset{}{\longrightarrow} X$ is a flasque quasi-resolution of $(X,[1_G])$. 
\end{prop}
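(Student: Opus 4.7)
The plan is to verify in turn the three properties defining a flasque quasi-resolution of $(X,[1_G])$: (i) the fibre of $Y\to X$ over $[1_G]$ is the trivial $S_0$-torsor; (ii) $\pic(Y_{\overline K})=0$; and (iii) $\overline K[Y]^*/\overline K^*$ is a permutation $\Gamma_K$-module.

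The key move is to present $Y$ itself as a homogeneous space under a larger group. Writing $\phi\colon H\twoheadrightarrow H^{\text{tor}}\hookrightarrow S_0$, we have $Y=(G\times S_0)/H$ with $H$ acting by $(g,s)\cdot h=(gh,\phi(h)^{-1}s)$. Equivalently, $Y\cong\widetilde G/\widetilde H$, where $\widetilde G:=G\times S_0$ and $\widetilde H:=\{(h,\phi(h)^{-1}):h\in H\}\cong H$. The fibre of $Y\to X$ over $[1_G]$ is then $\{[1_G,s]:s\in S_0\}$, and the point $[1_G,1_{S_0}]$ trivialises the $S_0$-torsor structure on this fibre, settling (i) at once.

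Next I would feed $\widetilde G/\widetilde H$ into Sansuc's fundamental exact sequence:
\begin{equation*}
0\longrightarrow\overline K[Y]^*/\overline K^*\longrightarrow\widehat{\widetilde G^{\text{tor}}}\xrightarrow{\text{res}}\widehat{\widetilde H^{\text{tor}}}\longrightarrow\pic(Y_{\overline K})\longrightarrow\pic(\widetilde G_{\overline K}).
\end{equation*}
Here $\widehat{\widetilde G^{\text{tor}}}=\widehat{G^{\text{tor}}}\oplus\widehat{S_0}$, $\widehat{\widetilde H^{\text{tor}}}=\widehat{H^{\text{tor}}}$, and $\pic(\widetilde G_{\overline K})=0$ since $G$ is $\mathbb G_m$-quasi-trivial and $S_0$ is a torus. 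The restriction sends $(\chi,\psi)$ to $\chi|_H-\phi^*\psi$. Dualising $1\to H^{\text{tor}}\to S_0\to E_0\to 1$ shows that $\phi^*\colon\widehat{S_0}\twoheadrightarrow\widehat{H^{\text{tor}}}$ is already surjective, so the restriction is surjective and (ii) follows: $\pic(Y_{\overline K})=0$. Moreover, $\overline K[Y]^*/\overline K^*$ is the fibre product $\widehat{G^{\text{tor}}}\times_{\widehat{H^{\text{tor}}}}\widehat{S_0}$.

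This fibre product sits in the short exact sequence
\begin{equation*}
0\longrightarrow\widehat{E_0}\longrightarrow\overline K[Y]^*/\overline K^*\longrightarrow\widehat{G^{\text{tor}}}\longrightarrow 0,
\end{equation*}
obtained by pulling back $0\to\widehat{E_0}\to\widehat{S_0}\to\widehat{H^{\text{tor}}}\to 0$ along $\widehat{G^{\text{tor}}}\to\widehat{H^{\text{tor}}}$. Both $\widehat{E_0}$ and $\widehat{G^{\text{tor}}}$ are permutation $\Gamma_K$-modules, by quasi-triviality of $E_0$ and of $G$ respectively. A short application of Shapiro's lemma gives $\mathrm{Ext}^1_{\Gamma_K}(\mathbb Z[\Gamma_K/\Lambda_1],\mathbb Z[\Gamma_K/\Lambda_2])=0$ for all open subgroups $\Lambda_i\le\Gamma_K$: it reduces to $\Hh^1$ of certain closed subgroups with trivial coefficients $\mathbb Z$, which vanishes because a continuous homomorphism from a profinite group to discrete $\mathbb Z$ is zero. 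Consequently $\mathrm{Ext}^1_{\Gamma_K}$ between any two permutation modules vanishes, the extension above splits, and $\overline K[Y]^*/\overline K^*\cong\widehat{E_0}\oplus\widehat{G^{\text{tor}}}$ is a permutation module, giving (iii). The main obstacle is precisely this splitting; the rest is routine bookkeeping around Sansuc's formula.
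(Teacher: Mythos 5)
Your argument is correct and complete: presenting $Y$ as $(G\times S_0)/\widetilde H$ with $\widetilde H$ the graph of $H\to S_0$, checking triviality of the fibre at $[1_G,1_{S_0}]$, and then running Sansuc's exact sequence $0\to \overline K[Y]^*/\overline K^*\to \widehat{G^{\text{tor}}}\oplus\widehat{S_0}\to \widehat{H^{\text{tor}}}\to \text{Pic}(Y_{\overline K})\to \text{Pic}(\widetilde G_{\overline K})=0$ does give $\text{Pic}(Y_{\overline K})=0$ (surjectivity of $\widehat{S_0}\to\widehat{H^{\text{tor}}}$ coming from exactness of the character functor applied to $1\to H^{\text{tor}}\to S_0\to E_0\to 1$), and the pullback extension $0\to\widehat{E_0}\to\overline K[Y]^*/\overline K^*\to\widehat{G^{\text{tor}}}\to 0$ splits because $\mathrm{Ext}^1$ of a permutation module by a coflasque (in particular permutation) module vanishes. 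Note that the paper itself does not prove this proposition: it quotes it from \cite[Proposition 7.4]{finto}, so there is no in-paper proof to compare with, but your route is the natural one and stands on its own. Two small points of hygiene: the $\mathrm{Ext}^1$ vanishing is cleanest if you first observe that both lattices, and any extension between them, have $\Gamma_K$-action factoring through a finite quotient, so the computation takes place over a finite Galois group where $\mathbb Z[\Gamma/\Lambda_1]$ is induced and coinduced and Mackey decomposition applies; and you should cite the version of Sansuc's sequence over $\overline K$ for a connected closed subgroup of a connected linear group, which is exactly the setting here since $\widetilde H\simeq H$ is connected.
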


Such a flasque quasi-resolution induces an evaluation map that factors through $R$-equivalence:
$$X(K)/R\overset{\text{ev}}{\longrightarrow}\Hh^1(K,S_0)$$
by considering the fibre over a $K$-point of the $S_0$-torsor $Y\overset{}{\longrightarrow} X$. The next proposition shows that the evaluation map is well-defined, in the sense that it does not depend on the chosen flasque quasi-resolution (see \cite[Proposition 6.16]{finto}).

\begin{prop}
\label{unico}
Let $Y_1\overset{}{\longrightarrow} X$ and $Y_2\overset{}{\longrightarrow} X$ be flasque quasi-resolutions of $(X,[1_G])$ under the flasque tori $S_1$ and $S_2$, respectively. Then, there exists a commutative diagram:
 \begin{equation*}
        \begin{tikzpicture}[baseline= (a).base]
        \node[scale=1] (a) at (0,0){
        \begin{tikzcd}
      & \Hh^1(K,S_1)\arrow[dd, "\sim"]\\
      X(K)/R\arrow[ru, "\emph{ev}_1"]\arrow[rd, "\emph{ev}_2"']\\
      & \Hh^1(K,S_2).
       \end{tikzcd}};
        \end{tikzpicture}
\end{equation*}
\end{prop}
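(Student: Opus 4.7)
The plan is to compare the two flasque quasi-resolutions via the fibre product $Z := Y_1 \times_X Y_2$, which I will show admits canonical morphisms to both $Y_1$ and $Y_2$ compatible with the torsor structures, thereby inducing the desired isomorphism of cohomology groups.

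Since $Z \to X$ is an $(S_1 \times S_2)$-torsor with trivial fibre over $[1_G]$, it carries a canonical $K$-point $e_0 \in Z(K)$; write $e_i \in Y_i(K)$ for its image. Viewing the projection $p_1: Z \to Y_1$ as an $S_2$-torsor, I would compute $\Hh^1(Y_1, S_2)$ via Hochschild--Serre: the conditions $\text{Pic}(\bar Y_1) = 0$ and $\bar K[Y_1]^*/\bar K^* =: \hat P$ permutation (the $\mathbb G_m$-quasi-triviality of $Y_1$), together with the flasqueness of $S_2$ (so that $\Hh^1(\Lambda, \text{Hom}(\widehat S_2, \hat P))$ vanishes for every subgroup $\Lambda \leq \Gamma_K$ by Shapiro's Lemma, applied to the transitive summands of $\hat P$), collapse the spectral sequence to an isomorphism $\Hh^1(Y_1, S_2) \cong \Hh^1(K, S_2)$ via pullback. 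Because the fibre of $p_1$ above $e_1$ is canonically trivial, the class $[p_1]$ vanishes, so $p_1$ admits a section $\sigma_1$ with $\sigma_1(e_1) = e_0$. Composing gives an $X$-morphism $\phi_{12} := p_2 \circ \sigma_1: Y_1 \to Y_2$ with $\phi_{12}(e_1) = e_2$; symmetrically one constructs $\phi_{21}: Y_2 \to Y_1$.

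The crucial step is to upgrade $\phi_{12}$ to a morphism of torsors. Its restriction to the trivial fibre $Y_1|_{[1_G]} \cong S_1$ is a pointed $K$-morphism of tori $S_1 \to S_2$; since units in Laurent polynomial rings over $\bar K$ are monomials, pointed $\bar K$-morphisms between split tori are exactly the integer-matrix group homomorphisms, so by Galois descent we obtain a unique $K$-homomorphism $\theta_{12}: S_1 \to S_2$ realising $\phi_{12}|_{[1_G]}$. To verify equivariance globally, consider the defect morphism $\delta: S_1 \times Y_1 \to S_2$ defined by $\phi_{12}(s \cdot y) = \delta(s,y) \cdot \theta_{12}(s) \cdot \phi_{12}(y)$: by construction it vanishes on $\{1\} \times Y_1$ and on $S_1 \times Y_1|_{[1_G]}$. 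Using the decomposition $\bar K[S_1 \times Y_1]^*/\bar K^* \cong \widehat S_1 \oplus \hat P$ and the classification of pointed $\bar K$-morphisms into a split torus into a character-in-$\widehat S_1$ part, a $\hat P$-part and a constant, these two vanishings force all three components of $\delta$ to be trivial, so $\phi_{12}$ is $\theta_{12}$-equivariant; analogously one extracts $\theta_{21}: S_2 \to S_1$ making $\phi_{21}$ equivariant.

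For every $x \in X(K)$, the restriction $\phi_{12}|_{Y_1|_x}$ is then a $\theta_{12}$-equivariant morphism of $K$-torsors, so $(\theta_{12})_* \text{ev}_1(x) = \text{ev}_2(x)$, and symmetrically $(\theta_{21})_* \text{ev}_2 = \text{ev}_1$. Applying the rigidity of the previous step once more to the $X$-endomorphisms $\phi_{21} \circ \phi_{12}$ and $\phi_{12} \circ \phi_{21}$, which fix $e_1$ and $e_2$ respectively and are therefore equivariant for $\theta_{21} \circ \theta_{12}$ and $\theta_{12} \circ \theta_{21}$, one concludes that the induced maps on $\Hh^1$ are mutually inverse isomorphisms, delivering the canonical isomorphism $\Hh^1(K, S_1) \cong \Hh^1(K, S_2)$ that makes the diagram commute. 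The main obstacle is the equivariance step: ruling out non-equivariant sections of $p_1$ and controlling the resulting automorphism groups demand precisely both parts of the $\mathbb G_m$-quasi-triviality of $Y_1$ --- Picard vanishing and the permutation structure of the unit module --- coupled with a careful character-theoretic analysis of morphisms between tori and $\mathbb G_m$-quasi-trivial varieties.
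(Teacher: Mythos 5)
Note first that the paper does not contain its own proof of this proposition: it is quoted from \cite[Proposition 6.16]{finto}, so your attempt can only be assessed on its own terms. Up to the construction of the comparison maps, your argument is sound: $Z=Y_1\times_X Y_2$ is an $(S_1\times S_2)$-torsor with a canonical point $e_0$ over $[1_G]$; the identification $\Hh^1(Y_1,S_2)\simeq \Hh^1(K,S_2)$ via pullback does follow from $\text{Pic}(\overline{Y_1})=0$, the permutation structure of $\overline{K}[Y_1]^*/\overline{K}^*$ and the flasqueness of $S_2$ (Shapiro), so the pointed section $\sigma_1$ exists; and the ``defect morphism'' computation, which only needs Rosenlicht's unit theorem (not quasi-triviality, contrary to your closing remark), correctly shows that the pointed $X$-morphism $\phi_{12}$ is equivariant for the homomorphism $\theta_{12}$ read off on the fibre over $[1_G]$. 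This yields $(\theta_{12})_*\circ\ev_1=\ev_2$ and $(\theta_{21})_*\circ\ev_2=\ev_1$.

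The genuine gap is the final step. The proposition asserts that the vertical arrow is an isomorphism, and your justification --- ``applying the rigidity of the previous step'' to $\phi_{21}\circ\phi_{12}$ and $\phi_{12}\circ\phi_{21}$ --- does not deliver this. Rigidity only says that $\phi_{21}\circ\phi_{12}$ is equivariant for $\theta_{21}\circ\theta_{12}$ and restricts to it on the fibre; it does not force $\phi_{21}\circ\phi_{12}=\text{id}_{Y_1}$, nor $\theta_{21}\circ\theta_{12}$ to be an automorphism of $S_1$, nor $(\theta_{21}\circ\theta_{12})_*$ to be the identity on $\Hh^1(K,S_1)$. A pointed $X$-endomorphism of a flasque quasi-resolution can be far from the identity: if $E$ is a quasi-trivial torus, then $Y_1:=Y_2\times_K E$ is again a flasque quasi-resolution of $(X,[1_G])$ under $S_2\times E$, and $(y,e)\mapsto (y,1)$ is a pointed $X$-endomorphism equivariant for the non-automorphism $(s,e)\mapsto(s,1)$; in this example the induced map on $\Hh^1$ happens to be bijective only because $\Hh^1(K,E)=0$, which is precisely the kind of flasque/permutation input your argument never invokes at this stage. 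To close the gap one needs an extra idea, e.g.\ the Schanuel-type lattice comparison: the two torsor structures on $Z$, together with $\text{Pic}(\overline{Y_i})=0$ and the vanishing of $\Hh^1\bigl(\Gamma,\Hom(\widehat{S_i},\widehat{P_j})\bigr)$ (flasque against permutation, via Shapiro), split the unit sequences and give $\overline{K}[Z]^*/\overline{K}^*\simeq \widehat{P_1}\oplus\widehat{S_2}\simeq \widehat{P_2}\oplus\widehat{S_1}$, hence $E_1\times S_2\simeq E_2\times S_1$ and $\Hh^1(K,S_1)\simeq\Hh^1(K,S_2)$; one must then still check that this isomorphism is compatible with your $\theta_{12}$ (or rerun the evaluation compatibility through it). As written, your proof establishes the two commutativity relations but not the bijectivity of the comparison map, so it does not yet prove the statement.
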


We can state the first criterion for non-surjectivity:

\begin{prop}
\label{lemmafin1}
Consider the homogeneous space $X=G/H$ as described after Definition \ref{after}. Suppose that the group $H$ satisfies the following conditions:
\begin{itemize}
    \item The maximal toric quotient $H^{\emph{tor}}$ is a flasque torus.
    \item The map on cohomology $\Hh^1(K,H)\rightarrow \Hh^1(K,H^{\emph{tor}})$ is not surjective.
\end{itemize}
Then, the evaluation map $X(K)/R\overset{\emph{ev}}{\longrightarrow} \Hh^1(K,S)$ induced by the universal $S$-torsor with trivial fibre over $[1_G]$, is not surjective. 
\end{prop}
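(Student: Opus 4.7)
The plan is to exploit the hypothesis that $H^{\text{tor}}$ is itself flasque in order to bypass the abstract flasque resolution and work directly with $H^{\text{tor}}$. First, I would choose the \emph{tautological} flasque resolution of $H^{\text{tor}}$, namely $1 \to H^{\text{tor}} \to H^{\text{tor}} \to 1 \to 1$ (so $S_0 := H^{\text{tor}}$ and $E_0$ is the trivial torus, which is vacuously quasi-trivial). Proposition \ref{esiste} then guarantees that the pushforward of the $H$-torsor $G \to X$ along the composition $H \to H^{\text{tor}} = S_0$ is a flasque quasi-resolution of $(X,[1_G])$, and by Proposition \ref{unico} it is enough to prove non-surjectivity of the evaluation map with this particular choice, whose target is now identified with $\Hh^1(K, H^{\text{tor}})$.

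Next, I would identify the evaluation map as a composite. Given $x \in X(K)$, the fiber of the $H$-torsor $G \to X$ over $x$ is an $H$-torsor whose class $c(x) \in \Hh^1(K, H)$ measures the obstruction to lifting $x$ to a $K$-point of $G$ (with $c([1_G]) = 0$). By functoriality of the pushforward of torsors along the quotient morphism $\pi\colon H \to H^{\text{tor}}$, the class of the fiber of $Y \to X$ over $x$ in $\Hh^1(K, H^{\text{tor}})$ is the image of $c(x)$ under the induced map $\pi_\ast \colon \Hh^1(K, H) \to \Hh^1(K, H^{\text{tor}})$. Consequently the evaluation map factors as
$$X(K) \longrightarrow \Hh^1(K, H) \overset{\pi_\ast}{\longrightarrow} \Hh^1(K, H^{\text{tor}}),$$
and after passing to $R$-equivalence classes, the image of $\ev$ is contained in the image of $\pi_\ast$.

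Finally, since by the second hypothesis $\pi_\ast$ is not surjective, neither can $\ev$ be. The whole argument is essentially bookkeeping once the tautological resolution is selected; the only step requiring a little care is the identification of the pushforward torsor with the map induced on $\Hh^1$, which amounts to a standard cocycle computation using a $\overline{K}$-lift of $x$ to $G(\overline{K})$. I do not expect any serious obstacle here: the substantive work in this direction of the paper lies not in this proposition but in producing an $H$ satisfying both hypotheses simultaneously, which falls outside its scope.
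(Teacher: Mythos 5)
Your proposal is essentially the paper's own proof: the paper likewise invokes Proposition \ref{esiste} to take the pushforward of $G\rightarrow X$ along $H\rightarrow H^{\text{tor}}$ (exactly your ``tautological'' resolution, using that $H^{\text{tor}}$ is flasque), compares it with the universal-torsor evaluation map via Proposition \ref{unico}, and concludes from the factorization $X(K)\rightarrow \Hh^1(K,H)\rightarrow \Hh^1(K,H^{\text{tor}})$. The only point the paper makes explicit that you leave implicit is that the restriction to $X$ of the universal torsor with trivial fibre over $[1_G]$ is itself a flasque quasi-resolution of $(X,[1_G])$ (justified there by citing Colliot-Th\'el\`ene--Kunyavski\u{\i} and Colliot-Th\'el\`ene), which is needed before Proposition \ref{unico} can be applied.
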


\begin{proof}
The restriction to $X$ of the universal torsor with trivial fibre over $[1_G]$ is a  flasque quasi resolution of $(X,[1_G])$ by \cite[Theorem 5.1]{MR2237268} and \cite[Proposition 5.1]{MR2404747}. Because of Proposition \ref{unico}, we can study the surjectivity of the evaluation map by choosing another flasque quasi-resolution. According to Proposition \ref{esiste}, the pushforward of the $H$-torsor $G\overset{}{\longrightarrow} X$ by $H\rightarrow H^{\text{tor}}$ is a flasque quasi-resolution of $(X,[1_G])$. Then, the evaluation map factors in the following way:
 \begin{equation*}
        \begin{tikzpicture}[baseline= (a).base]
        \node[scale=1] (a) at (0,0){
        \begin{tikzcd}
      X(K)\arrow[r, "\text{ev}"]\arrow[d] & \Hh^1(K,H^{\text{tor}}). \\
      \Hh^1(K,H)\arrow[ru]
       \end{tikzcd}};
        \end{tikzpicture}
\end{equation*}
The claim follows by diagram chasing.
\end{proof}

The previous criterion reduces the construction of the homogeneous space to the choice of a suitable $H$. Indeed, it is always possible to embed a linear algebraic group into a $\mathbb G_m$-quasi-trivial one, like $GL_n$ or $SL_n$.

\subsection{A second criterion for non-surjectivity}
\label{catabe}

In this subsection $K$ is a field extension of $k$. The main goal is to establish a refinement of Proposition \ref{lemmafin1}. This section is structured as follows: we begin by reviewing some standard facts about connected, reductive groups, then we introduce abelian cohomology and Borovoi's abelianization map, and we conclude by presenting some notions of derived categories. 

In this section $H$ is a connected, reductive linear $K$-algebraic group. We denote by $H^{\text{ss}}$ its derived subgroup (it is semi-simple) and by $H^{\text{tor}}$ the maximal toric quotient:
\begin{equation*}
        \begin{tikzpicture}[baseline= (a).base]
        \node[scale=1] (a) at (0,0){
        \begin{tikzcd}
      1 \arrow[r] & H^{\text{ss}} \arrow[r] & H \arrow[r] &  H^{\text{tor}} \arrow[r] & 1.
       \end{tikzcd}};
        \end{tikzpicture}
\end{equation*}
The group $H^{\text{sc}}$ is the universal covering of $H^{\text{ss}}$ (it is simply connected):
\begin{equation*}
        \begin{tikzpicture}[baseline= (a).base]
        \node[scale=1] (a) at (0,0){
        \begin{tikzcd}
      1 \arrow[r] & \mu \arrow[r] & H^{\text{sc}} \arrow[r] &  H^{\text{ss}} \arrow[r] & 1.
       \end{tikzcd}};
        \end{tikzpicture}
\end{equation*}
We denote by $\rho:H^{\text{sc}} \rightarrow H$ the composition morphism. We denote by $Z$ and $T$, respectively, the centre and a maximal torus of $H$. The groups $Z^{\text{(ss)}}$ and $T^{\text{(ss)}}$ are the intersection of $Z$ and $T$ with $H^{\text{ss}}$. They are, respectively, the centre and a maximal torus of $H^{\text{ss}}$. The groups $Z^{\text{(sc)}}$ and $T^{\text{(sc)}}$ are the preimages under $\rho$ of $Z$ and $T$, they are, respectively, the centre and a maximal torus of $H^{\text{sc}}$. The group $\mu$ is the kernel of $H^{\text{sc}}\rightarrow H^{\text{ss}}$, it is also the kernel of $T^{\text{(sc)}}\rightarrow T^{\text{(ss)}}$ and $Z^{\text{(sc)}}\rightarrow Z^{\text{(ss)}}$. The torus $H^{\text{tor}}$ is the cokernel of $T^{\text{(ss)}}\rightarrow T$ and of $Z^{\text{(ss)}}\rightarrow Z$, too:
\begin{equation*}
        \begin{tikzpicture}[baseline= (a).base]
        \node[scale=1] (a) at (0,0){
        \begin{tikzcd}
      1 \arrow[r] & \mu \arrow[r] & Z^{\text{(sc)}} \arrow[r] & Z \arrow[r] & H^{\text{tor}} \arrow[r] & 1.
       \end{tikzcd}};
        \end{tikzpicture}
\end{equation*}

Now, we see some fundamental concepts of abelian Galois cohomology of $H$, which are discussed in more detail in \cite{borovoi1998abelian}. In particular, we are interested to factorise $\Hh^1(K,H)\rightarrow \Hh^1(K, H^{\text{tor}})$ via the abelianization map.   

Given a group $H$ as described above, we can construct the $\Gamma_K$ complex $\left(T^{\text{(sc)}}(\overline{K})\rightarrow T(\overline{K})\right)$, where $T(\overline{K})$ is placed in degree $0$. The abelian Galois cohomology of $H$, denoted as $\Hh^i_{\text{ab}}(K, H)$, is defined as the Galois hypercohomology $ \mathbb H^i(K, T^{\text{(sc)}}(\overline{K})\rightarrow T(\overline{K}))$, for all $i\geq 1$ (see \cite[Chapter XVII]{MR0077480}).

The following result from \cite[Lemma 2.4.1]{borovoi1998abelian} demonstrates that the abelian Galois cohomology is well-defined, in the sense that it does not depend on the choice of the maximal torus $T$.

\begin{lemma}
The morphism of complexes
\begin{equation*}
        \begin{tikzpicture}[baseline= (a).base]
        \node[scale=1] (a) at (0,0){
        \begin{tikzcd}
      Z^{\emph{(sc)}}(\overline{K}) \arrow[r]\arrow[d] & Z(\overline{K}) \arrow[d]\\
      T^{\emph{(sc)}}(\overline{K}) \arrow[r] & T(\overline{K})
       \end{tikzcd}};
        \end{tikzpicture}
\end{equation*}
is a quasi-isomorphism. Consequently, it induces an isomorphism:
$$\mathbb H^i(K, Z^{\emph{(sc)}}(\overline{K})\rightarrow Z(\overline{K}))\overset{\sim}{\longrightarrow} \Hh^i_{\emph{ab}}(K,H).$$
\end{lemma}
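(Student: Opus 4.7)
My plan is to verify the quasi-isomorphism by computing the cohomology sheaves of both two-term complexes (with $T(\overline K)$ and $Z(\overline K)$ placed in degree $0$) and checking that the vertical morphism induces isomorphisms there; the statement about hypercohomology is then a formal consequence.

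First I would compute the cohomology of the bottom complex $T^{(\mathrm{sc})}(\overline K)\to T(\overline K)$. Since $T^{(\mathrm{ss})}\hookrightarrow T$ is injective, the kernel of $T^{(\mathrm{sc})}\to T$ coincides with the kernel of $T^{(\mathrm{sc})}\to T^{(\mathrm{ss})}$, which by construction is $\mu$. For the cokernel, the image of $T^{(\mathrm{sc})}\to T$ is exactly $T^{(\mathrm{ss})}$ (since $T^{(\mathrm{sc})}\twoheadrightarrow T^{(\mathrm{ss})}$), hence the cokernel is $T/T^{(\mathrm{ss})}\simeq H^{\mathrm{tor}}$. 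An identical computation for the top complex gives kernel $\mu$ (this time via $Z^{(\mathrm{ss})}\hookrightarrow Z$) and cokernel $Z/Z^{(\mathrm{ss})}\simeq H^{\mathrm{tor}}$, using the exact sequence
\begin{equation*}
1\longrightarrow \mu\longrightarrow Z^{(\mathrm{sc})}\longrightarrow Z\longrightarrow H^{\mathrm{tor}}\longrightarrow 1
\end{equation*}
recalled in the excerpt.

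Next I would check that the vertical inclusions $Z^{(\mathrm{sc})}\hookrightarrow T^{(\mathrm{sc})}$ and $Z\hookrightarrow T$ induce the identity on $\mu$ (immediate, both kernels are literally the kernel of $\rho$ restricted to these subgroups) and an isomorphism on cokernels. For the latter, the composition $Z/Z^{(\mathrm{ss})}\to T/T^{(\mathrm{ss})}\to H/H^{\mathrm{ss}}=H^{\mathrm{tor}}$ coincides with the canonical isomorphism $Z/Z^{(\mathrm{ss})}\xrightarrow{\sim} H^{\mathrm{tor}}$, and the second arrow $T/T^{(\mathrm{ss})}\to H^{\mathrm{tor}}$ is also the canonical isomorphism, so the first arrow must be an isomorphism as well. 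This makes the morphism of complexes a quasi-isomorphism of $\Gamma_K$-equivariant complexes of abelian groups.

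For the induced map on Galois hypercohomology, I would simply invoke the standard fact (\cite[Chapter XVII]{MR0077480}) that a quasi-isomorphism of bounded complexes of Galois modules induces an isomorphism of hypercohomology groups in every degree, giving the desired
\begin{equation*}
\mathbb{H}^i(K,Z^{(\mathrm{sc})}(\overline K)\to Z(\overline K))\xrightarrow{\sim}\mathbb{H}^i(K,T^{(\mathrm{sc})}(\overline K)\to T(\overline K))=\Hh^i_{\mathrm{ab}}(K,H).
\end{equation*}
The only mildly delicate point is the identification of the cokernels: one must be careful that the two quotients of $H^{\mathrm{tor}}$ are matched by the \emph{same} canonical isomorphism, but since everything arises from the quotient map $H\to H^{\mathrm{tor}}$ restricted to subgroups, compatibility is automatic. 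Consequently I do not foresee a serious obstacle; the proof is essentially an exercise in the structure theory of connected reductive groups together with the comparison of short exact sequences.
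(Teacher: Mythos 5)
Your argument is correct: identifying the kernel of both vertical-target complexes with $\mu$ and both cokernels with $H^{\mathrm{tor}}$ via the canonical maps, and checking compatibility through the quotient $H\to H^{\mathrm{tor}}$, is exactly the standard verification, and passing from a quasi-isomorphism of bounded complexes of discrete $\Gamma_K$-modules to an isomorphism of hypercohomology is indeed formal. The paper itself gives no proof but simply imports the statement from Borovoi's Lemma 2.4.1, whose proof is essentially the same kernel/cokernel computation you wrote out, so there is nothing to object to.
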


\begin{oss}
Abelian Galois cohomology is functorial in $H$ (see \cite[Section 2.5 and Proposition 2.8]{borovoi1998abelian}). 
\end{oss}

To define the abelianization map, we need an equivalent definition for abelian Galois cohomology in lower degrees. That is achieved by utilising the concept of crossed module, which extends the notion of a $2$-term complex to the non-abelian case. We will not provide the definition of crossed modules here. Interested reader can refer to \cite[Section 3.2]{borovoi1998abelian}. Crossed modules allows us to define the pointed set $\mathbb H^1(K, H^{\text{sc}}(\overline{K})\rightarrow H(\overline{K}))$ (see \cite[Lemma 3.7.1]{borovoi1998abelian}). 

\begin{oss}
The two complexes $\left (Z^{\text{(sc)}}(\overline{K})\rightarrow Z(\overline{K})\right )$ and $\left (T^{\text{(sc)}}(\overline{K})\rightarrow T(\overline{K})\right )$, being abelian, have a trivial structure as crossed module, and these two cohomology notions coincide for them. 
\end{oss}

The following lemma demonstrates that the pointed set $\mathbb H^1(K, H^{\text{sc}}(\overline{K})\rightarrow H(\overline{K}))$ is the abelian Galois cohomology $\Hh^1_{\text{ab}}(K,H)$.

\begin{prop}
\label{quasiiso}
The canonical diagram
\begin{equation*}
        \begin{tikzpicture}[baseline= (a).base]
        \node[scale=1] (a) at (0,0){
        \begin{tikzcd}
      Z^{\emph{(sc)}}(\overline{K}) \arrow[r]\arrow[d] & Z(\overline{K}) \arrow[d]\\
      T^{\emph{(sc)}}(\overline{K}) \arrow[r]\arrow[d] & T(\overline{K})\arrow[d]\\
      H^{\emph{sc}}(\overline{K}) \arrow[r] & H(\overline{K})
       \end{tikzcd}};
        \end{tikzpicture}
\end{equation*}
is a quasi-isomorphism of crossed modules and it induces an isomorphism in cohomology (in degree $-1$, $0$ and $1$).
\end{prop}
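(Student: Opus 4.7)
The plan is to factor the morphism in the diagram into two parts: the top square, from the complex $(Z^{\text{(sc)}}(\overline{K}) \to Z(\overline{K}))$ to $(T^{\text{(sc)}}(\overline{K}) \to T(\overline{K}))$, which is already a quasi-isomorphism by the preceding lemma; and the bottom square, from $(T^{\text{(sc)}}(\overline{K}) \to T(\overline{K}))$ to the crossed module $(H^{\text{sc}}(\overline{K}) \to H(\overline{K}))$. Everything reduces to verifying that the second map is a quasi-isomorphism of crossed modules, after which the statement on hypercohomology in degrees $-1$, $0$, $1$ follows from the fact that a quasi-isomorphism of crossed modules induces isomorphisms on Galois hypercohomology, which is exactly the content of \cite[Section 3]{borovoi1998abelian}.

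To check that the bottom square is a quasi-isomorphism I would compute kernels and cokernels of the two horizontal arrows. The kernel of $T^{\text{(sc)}}(\overline{K}) \to T(\overline{K})$ is $\mu(\overline{K})$, since by construction $T^{\text{(sc)}} = \rho^{-1}(T)$ and $\ker\rho = \mu$; the kernel of $H^{\text{sc}}(\overline{K}) \to H(\overline{K})$ is $\mu(\overline{K})$ by definition. Under the inclusions $T^{\text{(sc)}} \hookrightarrow H^{\text{sc}}$ and $T \hookrightarrow H$, the induced map on kernels is the identity $\mu \to \mu$. For the cokernels, since $T$ is a maximal torus of $H$ the composite $T \hookrightarrow H \twoheadrightarrow H^{\text{tor}}$ is surjective with kernel $T\cap H^{\text{ss}} = T^{\text{(ss)}}$, giving $T/T^{\text{(ss)}} \xrightarrow{\sim} H^{\text{tor}}$; on the other hand $H/\rho(H^{\text{sc}}) = H/H^{\text{ss}} = H^{\text{tor}}$, and a diagram chase shows that the induced map $T/T^{\text{(ss)}} \to H/H^{\text{ss}}$ is the identity on $H^{\text{tor}}$ under these identifications.

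The main obstacle lies in handling degree $1$ rigorously. For the top (abelian) square, the passage from quasi-isomorphism of complexes to isomorphism of hypercohomology is classical homological algebra; but for the bottom square the target is a genuinely non-abelian crossed module, and $\mathbb H^1(K, H^{\text{sc}}(\overline{K}) \to H(\overline{K}))$ is only defined as a pointed set via $1$-cocycles with values in the crossed module. Here I would rely on Borovoi's machinery, which shows that a morphism of crossed modules inducing isomorphisms on the kernels and cokernels of the boundary maps yields a canonical bijection on the pointed sets $\mathbb H^1$; this is precisely why $\Hh^1_{\text{ab}}(K,H)$ can equivalently be computed using the abelian complex $(T^{\text{(sc)}} \to T)$ or the crossed module $(H^{\text{sc}} \to H)$, making the abelianization map to $\Hh^1(K,H)$ well-defined.
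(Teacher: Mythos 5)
Your proposal is correct and takes essentially the same route as the paper, whose proof is a pure citation: Borovoi's Lemma 2.4.1 (1998 memoir) for the quasi-isomorphism statement and Theorem 3.3 of his 1992 work on non-abelian hypercohomology of crossed modules for the induced isomorphisms in degrees $-1$, $0$, $1$. Your explicit check that the inclusion $\left(T^{\text{(sc)}}(\overline{K})\to T(\overline{K})\right)\hookrightarrow \left(H^{\text{sc}}(\overline{K})\to H(\overline{K})\right)$ induces the identity on the kernel $\mu(\overline{K})$ and on the cokernel $H^{\text{tor}}(\overline{K})$ merely fills in detail that the paper delegates to the reference.
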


\begin{proof}
The first part is \cite[Lemma 2.4.1]{borovoi1998abelian} and the second part is \cite[Theorem 3.3]{Borovoi1992NonabelianHO}. 
\end{proof}

The fact that we can express $\Hh^1_{\text{ab}}(K,H)$ in terms of the crossed module $\left (H^{\text{sc}}(\overline{K})\rightarrow H(\overline{K}) \right )$ leads to the definition of the abelianization map
$$\text{ab}^i:\Hh^i(K,H)\rightarrow \Hh^i_{\text{ab}}(K, H)$$
in the case of $i=0,1$. As outlined in \cite[Section 3.10]{borovoi1998abelian}, this map is induced by the following morphism of crossed modules:
\begin{equation*}
        \begin{tikzpicture}[baseline= (a).base]
        \node[scale=1] (a) at (0,0){
        \begin{tikzcd}
      1 \arrow[r]\arrow[d] & H(\overline{K}) \arrow[d]\\
      H^{\text{sc}}(\overline{K}) \arrow[r] & H(\overline{K}).
       \end{tikzcd}};
        \end{tikzpicture}
\end{equation*}

\begin{oss}
If $H$ is commutative (equivalently $H$ is a torus), then the abelianization map is the identity.
\end{oss}

\begin{oss}
The abelianization maps are functorial in $H$ (see \cite[Proposition 3.11]{borovoi1998abelian}). 
\end{oss}

The above remarks lead to the following lemma:

\begin{lemma}
\label{fattorizza}
Let $H\rightarrow H^{\emph{tor}}$ be the map from a group to its maximal toric quotient. Then, the following diagram commutes:
\begin{equation*}
        \begin{tikzpicture}[baseline= (a).base]
        \node[scale=1] (a) at (0,0){
        \begin{tikzcd}
      \Hh^1(K,H)\arrow[r]\arrow[d, "\emph{ab}^1"'] & \Hh^1(K,H^{\emph{tor}})\arrow[d, "\ide"]\\
       \Hh_{\emph{ab}}^1(K,H)\arrow[r] & \Hh^1_{\emph{ab}}(K,H^{\emph{tor}})
       \end{tikzcd}};
        \end{tikzpicture}
\end{equation*}
where $\Hh^1_{\emph{ab}}(K,H)\rightarrow \Hh^1_{\emph{ab}}(K,H^{\emph{tor}})$ is induced by the morphism of complexes:
\begin{equation*}
        \begin{tikzpicture}[baseline= (a).base]
        \node[scale=1] (a) at (0,0){
        \begin{tikzcd}
      Z^{\emph{(sc)}}(\overline{K}) \arrow[r]\arrow[d] & Z(\overline{K}) \arrow[d]\\
     1 \arrow[r] & H^{\emph{tor}}(\overline{K}).
       \end{tikzcd}};
        \end{tikzpicture}
\end{equation*}
\end{lemma}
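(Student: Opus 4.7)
The plan is to derive the commutativity from two general facts already recalled in this subsection: functoriality of the abelianization map $\text{ab}^1$, and its triviality on commutative groups.

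First I would apply functoriality of $\text{ab}^1$ (the Remark recorded just after Proposition \ref{quasiiso}) to the quotient morphism $\pi\colon H\to H^{\text{tor}}$. This produces a commutative square
\begin{equation*}
\begin{tikzcd}
\Hh^1(K,H)\arrow[r]\arrow[d, "\text{ab}^1"'] & \Hh^1(K,H^{\text{tor}})\arrow[d, "\text{ab}^1"]\\
\Hh_{\text{ab}}^1(K,H)\arrow[r, "\pi_*"] & \Hh^1_{\text{ab}}(K,H^{\text{tor}}).
\end{tikzcd}
\end{equation*}
Since $H^{\text{tor}}$ is a torus, hence commutative, the Remark that $\text{ab}^1$ is the identity on commutative groups shows that the right vertical arrow is $\ide$. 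This already matches the outer square of the lemma.

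It remains to identify $\pi_*$ with the map induced by the morphism of complexes displayed in the statement. At the level of crossed modules, $\pi_*$ is induced by the morphism $(H^{\text{sc}}\to H)\to (1\to H^{\text{tor}})$: the map $H^{\text{sc}}\to H^{\text{tor}}$ must be trivial because $H^{\text{sc}}$ is semisimple while $H^{\text{tor}}$ is abelian. Applying Proposition \ref{quasiiso} separately to $H$ and to $H^{\text{tor}}$ provides quasi-isomorphisms identifying the source and target of $\pi_*$ with the hypercohomologies of $(Z^{(\text{sc})}\to Z)$ and $(1\to H^{\text{tor}})$ respectively, and under these identifications $\pi_*$ becomes the map induced by the morphism of complexes in the statement.

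The main (and essentially the only) obstacle is checking this last compatibility, i.e.\ that under the quasi-isomorphisms of Proposition \ref{quasiiso} the functorial $\pi_*$ coincides with the map prescribed in the lemma. Concretely this reduces to two elementary observations: the centre $Z$ of $H$ surjects onto $H^{\text{tor}}$ with finite kernel (because $H=Z\cdot H^{\text{ss}}$ for reductive $H$), and $Z^{(\text{sc})}\hookrightarrow H^{\text{sc}}\to H\to H^{\text{tor}}$ is zero for the semisimple-versus-abelian reason above. These yield the requested morphism of complexes, and its compatibility with the quasi-isomorphisms of Proposition \ref{quasiiso} is built into Borovoi's framework, so no further calculation is needed.
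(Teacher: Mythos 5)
Your proposal is correct and follows essentially the same route as the paper: the paper writes out the commutative square of crossed modules underlying Borovoi's functoriality statement (Proposition 3.11) with $(1\to H^{\text{tor}})$ as target, which is exactly your combination of functoriality of $\text{ab}^1$ with the fact that $\text{ab}^1$ is the identity for the torus $H^{\text{tor}}$, and then, like you, it invokes Proposition \ref{quasiiso} to identify the bottom map with the one induced by $(Z^{(\text{sc})}\to Z)\to(1\to H^{\text{tor}})$. No gap.
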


\begin{proof}
We follow the proof of \cite[Proposition 3.11]{borovoi1998abelian}. There is the following commutative diagram of crossed modules:
\begin{equation*}
        \begin{tikzpicture}[baseline= (a).base]
        \node[scale=1] (a) at (0,0){
        \begin{tikzcd}
      \left (1 \rightarrow H(\overline{K}) \right ) \arrow[r]\arrow[d]& \left (1 \rightarrow H^{\text{tor}}(\overline{K}) \right ) \arrow[d, "\id"]\\
      \left (H^{\text{sc}}(\overline{K}) \rightarrow H(\overline{K}) \right ) \arrow[r] & \left (1 \rightarrow H^{\text{tor}}(\overline{K}) \right ).
       \end{tikzcd}};
        \end{tikzpicture}
\end{equation*}
Such diagram induces in cohomology the one of the statement. The rest of the claim follows by Proposition \ref{quasiiso}.
\end{proof}

The previous lemma provides a method for factoring the morphism $\Hh^1(K,H)\rightarrow \Hh^1(K,H^{\text{tor}})$. This motivates the study of the morphism of complexes: 
\begin{equation*}
        \begin{tikzpicture}[baseline= (a).base]
        \node[scale=1] (a) at (0,0){
        \begin{tikzcd}
      Z^{\text{(sc)}}(\overline{K}) \arrow[r]\arrow[d] & Z(\overline{K}) \arrow[d]\\
     1 \arrow[r] & H^{\text{tor}}(\overline{K}).
       \end{tikzcd}};
        \end{tikzpicture}
\end{equation*}
In the last part of this section we use some concepts related to derived categories for which we refer to \cite{MR1269324}. 

As a consequence of our initial remarks in this section, there is the following exact sequence of groups of multiplicative type:
\begin{equation*}
        \begin{tikzpicture}[baseline= (a).base]
        \node[scale=1] (a) at (0,0){
        \begin{tikzcd}
      1 \arrow[r] & \mu(\overline{K}) \arrow[r] & Z^{\text{(sc)}}(\overline{K}) \arrow[r] & Z(\overline{K}) \arrow[r] & H^{\text{tor}}(\overline{K}) \arrow[r] & 1.
       \end{tikzcd}};
        \end{tikzpicture}
\end{equation*}
We can split the sequence in two exact sequences:
\begin{equation*}
        \begin{tikzpicture}[baseline= (a).base]
        \node[scale=1] (a) at (0,0){
        \begin{tikzcd}
      1 \arrow[r] & \mu(\overline{K}) \arrow[r] & Z^{\text{(sc)}}(\overline{K}) \arrow[r] & Z ^{\text{(ss)}}(\overline{K})\arrow[r] & 1\\
       1 \arrow[r] & Z ^{\text{(ss)}}(\overline{K}) \arrow[r] & Z(\overline{K}) \arrow[r] & H^{\text{tor}}(\overline{K}) \arrow[r] & 1.
       \end{tikzcd}};
        \end{tikzpicture}
\end{equation*}
Each exact sequence induces an exact triangle in the derived category of $\Gamma_K$-modules (see \cite[Section 1.3.2]{AST_1996__239__R1_0}):
\begin{equation*}
        \begin{tikzpicture}[baseline= (a).base]
        \node[scale=1] (a) at (0,0){
        \begin{tikzcd}
        \mu(\overline{K})[1] \arrow[r] & \left ( Z^{\text{(sc)}}(\overline{K})\rightarrow Z(\overline{K})\right ) \arrow[r] & H^{\text{tor}}(\overline{K}) \arrow[r, "\delta"] & \mu(\overline{K})[2],\\
        \mu(\overline{K}) \arrow[r] & Z^{\text{(sc)}}(\overline{K}) \arrow[r] & Z^{\text{(ss)}}(\overline{K}) \arrow[r, "\delta_1"] & \mu(\overline{K})[1],\\        
      Z^{\text{(ss)}}(\overline{K}) \arrow[r] & Z(\overline{K}) \arrow[r] & H^{\text{tor}}(\overline{K}) \arrow[r, "\delta_2"] & Z^{\text{(ss)}}(\overline{K})[1].    
      \end{tikzcd}};
        \end{tikzpicture}
\end{equation*}

If we apply the Galois cohomology functor $\Hh^0(K,-)$ to the above exact triangles we get the following exact sequences:
\begin{equation*}
        \begin{tikzpicture}[baseline= (a).base]
        \node[scale=1] (a) at (0,0){
        \begin{tikzcd}
    \Hh^1_{\text{ab}}(K,H)\arrow[r] & \Hh^1(K,H^{\text{tor}}) \arrow[r, "\delta_*"] & \Hh^3(K,\mu),\\
    \Hh^2(K,Z^{\text{(sc)}}) \arrow[r] & \Hh^2(K,Z^{\text{(ss)}}) \arrow[r, "(\delta_1)_*"] & \Hh^3(K,\mu),\\
    \Hh^1(K,Z) \arrow[r] & \Hh^1(K,H^{\text{tor}}) \arrow[r, "(\delta_2)_*"] & \Hh^2(K,Z^{\text{(ss)}}).
       \end{tikzcd}};
        \end{tikzpicture}
\end{equation*}

\begin{oss}
We note that the first map of the first exact sequence is the morphism in Lemma \ref{fattorizza}.
\end{oss}

We can state the second criterion for non-surjectivity. We embed $H$ into a $\mathbb G_m$-quasi-trivial, linear $K$-algebraic group $G$ and we consider the homogeneous space $G/H$. 

\begin{prop}
\label{commuta}
Consider the homogeneous space $X=G/H$ as above. Suppose that the group $H$ satisfies the following conditions:
\begin{itemize}
    \item The maximal toric quotient $H^{\emph{tor}}$ is a flasque torus.
    \item The composite map on cohomology $\Hh^1(K,H^{\emph{tor}})\xrightarrow{(\delta_2)_*} \Hh^2(K,Z^{(\emph{ss})})\xrightarrow{(\delta_1)_*} \Hh^3(K,\mu)$ is non-trivial. 
\end{itemize}
Then, the evaluation map $X(K)/R\overset{\emph{ev}}{\longrightarrow}\Hh^1(K,S)$ induced by the universal $S$-torsor with trivial fibre over $[1_G]$, is not surjective. Furthermore, if $\Hh^1(K,H^{\emph{tor}})$ is cyclic, then the evaluation map is zero.
\end{prop}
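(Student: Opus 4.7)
The plan is to reduce the surjectivity question to the corresponding one for $\Hh^1(K, H) \to \Hh^1(K, H^{\text{tor}})$ via Proposition \ref{lemmafin1}, and then to sharpen it by passing through abelian Galois cohomology. Since $H^{\text{tor}}$ is flasque, the pushforward of the $H$-torsor $G \to X$ along $H \to H^{\text{tor}}$ is itself a flasque quasi-resolution of $(X, [1_G])$ by Proposition \ref{esiste}, and by Proposition \ref{unico} its evaluation map has the same image as that of the universal torsor. As in the proof of Proposition \ref{lemmafin1}, this evaluation map factors as $X(K) \to \Hh^1(K, H) \to \Hh^1(K, H^{\text{tor}})$, so $\text{image}(\text{ev})$ is contained in the image of $\Hh^1(K, H) \to \Hh^1(K, H^{\text{tor}})$.

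Next, I apply Lemma \ref{fattorizza} to factor the latter through abelian Galois cohomology as $\Hh^1(K, H) \xrightarrow{\text{ab}^1} \Hh^1_{\text{ab}}(K, H) \to \Hh^1(K, H^{\text{tor}})$, where the second arrow is induced by the morphism of complexes $(Z^{(\text{sc})} \to Z) \to (1 \to H^{\text{tor}})$. The first of the three exact sequences derived in this subsection,
\[
\Hh^1_{\text{ab}}(K, H) \to \Hh^1(K, H^{\text{tor}}) \xrightarrow{\delta_*} \Hh^3(K, \mu),
\]
identifies the image of this second arrow with $\ker(\delta_*)$; hence $\text{image}(\text{ev}) \subseteq \ker(\delta_*)$.

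The crux of the argument is the identification $\delta_* = (\delta_1)_* \circ (\delta_2)_*$. This follows from the octahedron axiom applied to the two short exact sequences $1 \to \mu \to Z^{(\text{sc})} \to Z^{(\text{ss})} \to 1$ and $1 \to Z^{(\text{ss})} \to Z \to H^{\text{tor}} \to 1$: their Yoneda splice is the four-term extension defining the connecting map $\delta$, which therefore decomposes in the derived category as $\delta = \delta_1[1] \circ \delta_2$. Taking Galois cohomology yields $\delta_* = (\delta_1)_* \circ (\delta_2)_*$. With this identification, the hypothesis that $(\delta_1)_* \circ (\delta_2)_* \neq 0$ forces $\ker(\delta_*) \subsetneq \Hh^1(K, H^{\text{tor}})$, and the non-surjectivity of $\text{ev}$ follows. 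If moreover $\Hh^1(K, H^{\text{tor}})$ is cyclic (of prime order, as is the case in the application to Theorem \ref{thm2}), then every proper subgroup is trivial, whence $\text{image}(\text{ev}) = \{0\}$ and $\text{ev}$ is the zero map.

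The main obstacle is the octahedron-type identification $\delta_* = (\delta_1)_* \circ (\delta_2)_*$, which requires careful bookkeeping of shifts and signs in the derived category of $\Gamma_K$-modules; everything else is a direct combination of the reduction via Proposition \ref{lemmafin1} and the preparatory machinery developed in Subsection \ref{catabe}.
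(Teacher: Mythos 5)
Your argument is correct and follows the paper's proof essentially verbatim: you factor the evaluation map through $\Hh^1(K,H)\rightarrow\Hh^1_{\text{ab}}(K,H)\rightarrow\Hh^1(K,H^{\text{tor}})$ via Proposition \ref{lemmafin1} and Lemma \ref{fattorizza}, identify the image of the abelian piece with $\ker(\delta_*)$ using the exact sequence from the first triangle, and conclude from $\delta=\pm\,\delta_1[1]\circ\delta_2$ that this kernel is a proper subgroup. The only differences are cosmetic: where you derive the splice identity from the octahedron axiom, the paper simply cites a reference for exactly that fact (up to sign, which is harmless here), and your parenthetical restriction to the prime-order cyclic case reflects what the argument (in the paper as well) actually uses in the final step.
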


\begin{proof}
As previously mentioned, there is the following exact sequence:
\begin{equation*}
        \begin{tikzpicture}[baseline= (a).base]
        \node[scale=1] (a) at (0,0){
        \begin{tikzcd}
    \Hh^1_{\text{ab}}(K,H)\arrow[r] & \Hh^1(K,H^{\text{tor}}) \arrow[r, "\delta_*"] & \Hh^3(K,\mu).
       \end{tikzcd}};
        \end{tikzpicture}
\end{equation*}
According to \cite[Proposition 3.2.5]{AST_1996__239__R1_0}, the map $\delta$ coincides, up to a sign, with the composition $\delta_1[1]\circ \delta_2$. Then, we can deduce that the morphism $\delta_*:\Hh^1(K,H^{\text{tor}}){\longrightarrow} \Hh^3(K,\mu)$ is non-zero and that $\Hh^1_{\text{ab}}(K,H)\rightarrow \Hh^1(K,H^{\text{tor}})$ is not surjective. The evaluation map is not surjective because of Proposition \ref{lemmafin1}. If $\Hh^1(K,H^{\text{tor}})$ is cyclic, then $\Hh^1_{\text{ab}}(K,H)\rightarrow \Hh^1(K,H^{\text{tor}})$ must be zero, so the evaluation map is zero as well. 
\end{proof}

\section{Construction of the homogeneous space when $\text{cd}(K)=2$}
\label{dim2}

In this section, we are going to conclude the construction of the homogeneous space over a field of cohomological dimension $2$. Thanks to the observation made at the end of Section \ref{sezionefla}, it is sufficient to find a suitable connected, linear $K$-algebraic group $H$ as in Proposition \ref{lemmafin1}. The letter $p$ denotes a fixed prime.

The section is organised as follows. First of all, we reduce the construction of $H$ to the existence of an exact sequence with `nice' properties. In Subsections \ref{sub1} and \ref{sub2}, we demonstrate the existence of this exact sequence, and in Subsection \ref{sub3} we prove that the same construction is valid over some field extension of cohomological dimension $2$. 

In the first part, we only assume that the field $K$ has characteristic zero. 

\begin{prop}
\label{main1}
Consider an exact sequence of groups of multiplicative type over $K$:
\begin{equation*}
        \begin{tikzpicture}[baseline= (a).base]
        \node[scale=1] (a) at (0,0){
        \begin{tikzcd}
       1 \arrow[r] & \mu_p\arrow[r, "i_1"] & R \arrow[r, "p_1"]& F\arrow[r] & 1,
       \end{tikzcd}};
        \end{tikzpicture}
\end{equation*}
that satisfies the following properties:
\begin{itemize}
    \item the group $F$ is an algebraic torus;
    \item the image of the connecting morphism $\Hh^1(K,F)\rightarrow \Hh^2(K,\mu_p)$ contains the class of a central simple algebra of index greater than $p$.
\end{itemize}
Then, there exists a connected, reductive, linear $K$-algebraic group $H$ such that:
\begin{itemize}
    \item the maximal toric quotient of $H$ is $F$;
    \item the map on cohomology $\Hh^1(K,H)\rightarrow \Hh^1(K,F)$ is not surjective.
\end{itemize}
\end{prop}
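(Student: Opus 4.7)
The plan is to build $H$ as the pushout of the diagram $SL_p \leftarrow \mu_p \xrightarrow{i_1} R$, where $\mu_p \hookrightarrow SL_p$ is the inclusion of the center. Concretely, I would set
$$H = (SL_p \times R)/\mu_p,$$
with $\mu_p$ embedded anti-diagonally as $\zeta \mapsto (\zeta, i_1(\zeta)^{-1})$. Assuming $R$ is a torus (which can be arranged in Subsections \ref{sub1}--\ref{sub2}), $H$ is an extension of $F$ by $SL_p$, hence connected, reductive and linear over $K$. Direct computation shows that $SL_p$ and $R$ both embed in $H$ with intersection equal to the common image of $\mu_p$; since $R$ is central in $SL_p \times R$, the derived subgroup of $H$ is the image of $SL_p$, which is perfect, and $H/SL_p \cong R/i_1(\mu_p) = F$. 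Thus $H^{\text{ss}} = SL_p$ and $H^{\text{tor}} = F$, proving the first bullet point.

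The crucial observation is that $H$ fits into a central extension
$$1 \longrightarrow \mu_p \longrightarrow H \longrightarrow PGL_p \times F \longrightarrow 1,$$
whose kernel is the common image in $H$ of $Z(SL_p)$ and $i_1(\mu_p)$. I would check that pulling back this extension along the two component inclusions $PGL_p \hookrightarrow PGL_p \times F$ and $F \hookrightarrow PGL_p \times F$ recovers, respectively, the classical extension $1 \to \mu_p \to SL_p \to PGL_p \to 1$ and the given extension $1 \to \mu_p \to R \to F \to 1$. A short cocycle-level computation — lifting cocycle pairs $(\gamma, \beta)$ to cochains $\tilde\gamma \in SL_p(\overline K)$, $\tilde\beta \in R(\overline K)$, and reading the coboundary of $[(\tilde\gamma, \tilde\beta)]$ in $H$ modulo the anti-diagonal $\mu_p$ — yields additivity of the associated boundary map:
$$\partial(\gamma, \beta) = \partial_1(\gamma) + \partial_2(\beta),$$
where $\partial_1$ is the classical boundary coming from $1 \to \mu_p \to SL_p \to PGL_p \to 1$ and $\partial_2$ is the boundary from the hypothesis.

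Since $\mu_p$ is central in $H$, the cohomology sequence of pointed sets gives: a pair $(\gamma, \beta)$ lifts to $H^1(K, H)$ if and only if $\partial(\gamma, \beta) = 0$. Suppose $\beta \in H^1(K, F)$ with $\partial_2(\beta) = [A]$, $\emph{ind}(A) > p$ (exists by hypothesis), lifted to some $h \in H^1(K, H)$. Letting $\gamma_0$ be the image of $h$ in $H^1(K, PGL_p)$, we would have $\partial_1(\gamma_0) = -\partial_2(\beta) = -[A]$; but $\gamma_0 \in H^1(K, PGL_p)$ classifies a central simple algebra of degree $p$, so $\partial_1(\gamma_0)$ has index dividing $p$, contradicting $\emph{ind}(A) > p$. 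Therefore $\beta$ does not lie in the image of $H^1(K, H) \to H^1(K, F)$, which yields the second bullet point. The main obstacle is the cocycle-level verification that the boundary map of the central extension above splits additively along the product structure, which requires care with sign conventions arising from the anti-diagonal embedding and with the non-abelian nature of $PGL_p$, but is otherwise a direct calculation once the central extension and its pullbacks are in place.
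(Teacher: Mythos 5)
Your proposal is correct and follows essentially the same route as the paper: you construct the very same group $H=(SL_p\times R)/\mu_p$ with $\mu_p$ embedded anti-diagonally, and you use the same obstruction --- that the boundary image of $\Hh^1(K,PGL_p)$ in $\Hh^2(K,\mu_p)$ consists of classes of index dividing $p$ --- with the cocycle comparison merely repackaged through the central extension $1\to\mu_p\to H\to PGL_p\times F\to 1$ and the lifting criterion, where the paper instead compares the connecting maps attached to $1\to\mu_p\to SL_p\times R\to H\to 1$. One minor remark: your standing assumption that $R$ is a torus is both unnecessary and not actually guaranteed by the later construction (there $R$ is only of multiplicative type), but nothing in your argument uses it --- connectedness and reductivity of $H$ follow from the extension $1\to SL_p\to H\to F\to 1$ and centrality of the image of $R$ from commutativity --- so the proof covers the statement as written.
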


\begin{proof}
We consider the following exact sequence:
\begin{equation*}
        \begin{tikzpicture}[baseline= (a).base]
        \node[scale=1] (a) at (0,0){
        \begin{tikzcd}
       1 \arrow[r] & \mu_p\arrow[r, "i_2"] & SL_p \arrow[r, "p_2"]& PSL_p\arrow[r] & 1.
       \end{tikzcd}};
        \end{tikzpicture}
\end{equation*}
We define the linear $K$-algebraic group $H$ to fit into the following exact sequence:
\begin{equation*}
        \begin{tikzpicture}[baseline= (a).base]
        \node[scale=1] (a) at (0,0){
        \begin{tikzcd}
       1 \arrow[r] & \mu_p\arrow[r, "i_3"] & SL_p\times_K R \arrow[r, "p_3"]& H\arrow[r] & 1,
       \end{tikzcd}};
        \end{tikzpicture}
\end{equation*}
where the formula (on points) for $i_3$ is given by
\begin{equation*}
        \begin{tikzpicture}[baseline= (a).base]
        \node[scale=1] (a) at (0,0){
        \begin{tikzcd}
      a \arrow[r] & (i_2(a), i_1(a)^{-1}).
       \end{tikzcd}};
        \end{tikzpicture}
    \end{equation*}
\begin{itemize}
    \item The group $H$ is connected, reductive and its maximal toric quotient is $F$. The canonical inclusions of group schemes $j_1:R\overset{}{\longrightarrow} H$ and $j_2:SL_p \overset{}{\longrightarrow}H$ fit in the following commutative diagram, where all the sequences are exact:
    \begin{equation*}
        \begin{tikzpicture}[baseline= (a).base]
        \node[scale=1] (a) at (0,0){
        \begin{tikzcd}
       & 1 \arrow[d] & 1\arrow[d] \\
       1 \arrow[r] & \mu_p\arrow[d, "i_2"]\arrow[r, "i_1"] & R\arrow[d, "j_1"] \arrow[r, "p_1"]& F\arrow[d, "\id"]\arrow[r] & 1 \\
       1 \arrow[r]& SL_p\arrow[r, "j_2"] \arrow[d, "p_2"]& H \arrow[r, "\pi_1"]\arrow[d, "\pi_2"]& F\arrow[r] & 1 \\
       & PSL_p\arrow[d]\arrow[r, "\id"] & PSL_p\arrow[d] \\
       & 1 & 1. 
       \end{tikzcd}};
        \end{tikzpicture}
    \end{equation*}
    Since both $SL_p$ and $F$ are reductive and connected, $H$ has the same properties as well (see \cite[Proposition 5.59]{MR3729270} and \cite[Corollary 14.7]{MR3729270}). Furthermore, because $F$ is commutative and that $SL_p$ is semi-simple let us deduce that the maximal toric quotient of $H$ is $F$.
    \item The map $(\pi_1)_*:\Hh^1(K,H)\overset{}{\longrightarrow} \Hh^1(K,F)$ is not surjective. Consider the following maps:
    $$\text{H}^1(K, H)\xrightarrow{(\pi_1)_*} \text{H}^1(K,F) \overset{\delta_1}{\longrightarrow}\text{H}^2(K, \mu_p)$$
    and
    $$\text{H}^1(K, H)\xrightarrow{(\pi_2)_*} \text{H}^1(K,PSL_p) \overset{\delta_2}{\longrightarrow} \text{H}^2(K, \mu_p)$$
    First of all, we prove by a computation that they coincide, up to a sign, with the connecting map induced by the third exact sequence:  
    $$\text{H}^1(K, H) \overset{\delta_3}{\longrightarrow} \text{H}^2(K, \mu_p).$$
Given a cocycle $d$ in $Z^1(K, H(\overline{K}))$, let $(b_2, b_1):\Gamma_K \rightarrow SL_p(\overline{K})\times F(\overline{K})$ be a function that lifts $d$. If we identify $\mu_p$ with its image by $i_2$ and $i_1$, then $\delta_3 ([d])$, by construction (see \cite[Section 5.6]{MR1324577}), is the class of the following cocycle:
$$(b_2)_{\gamma_1}\cdot\  ^{\gamma_1}(b_2)_{\gamma_2} \cdot (b_2)_{\gamma_1 \gamma_2}^{-1}=(b_1)_{\gamma_1}^{-1}\cdot\  ^{\gamma_1}(b_1)_{\gamma_2}^{-1} \cdot (b_1)_{\gamma_1 \gamma_2}.$$
By diagram chasing, $\pi_2 \circ d = p_2 \circ b_2\in \text{Z}^1(K,PSL_p)$ and $\pi_1\circ d = p_1\circ b_1\in \text{Z}^1(K,F)$. Then, $b_2$ and $b_1$ lift the cocycles $(\pi_2)_* (d)$ and $(\pi_1)_* (d)$, respectively. By definition, 
$$(b_2)_{\gamma_1}\cdot\  ^{\gamma_1}(b_2)_{\gamma_2} \cdot (b_2)_{\gamma_1 \gamma_2}^{-1}$$
and
$$(b_1)_{\gamma_1}\cdot\  ^{\gamma_1}(b_1)_{\gamma_2} \cdot (b_1)_{\gamma_1 \gamma_2}^{-1}$$
correspond, respectively, to the cocycles $\delta_2 \circ (\pi_2)_* (d)$ and $\delta_1\circ (\pi_1)_* (d)$. The first one coincides with $\delta_3 ([d])$ and the second one with $-\delta_3([d])$. Suppose, by contradiction, that the map $\Hh^1(K,H)\rightarrow \Hh^1(K,F)$ is not surjective. The image of $\Hh^1(K,PSL_p)\rightarrow \Hh^2(K,\mu_p)$ consists of classes of central simple algebras with index dividing $p$ (see \cite[Theorem 2.4.3 and Proposition 2.7.9]{gille_szamuely_2006}). By hypothesis, the image of the map $\Hh^1(K,F)\rightarrow \Hh^2(K,\mu_p)$ contains the class of a central simple algebra with index greater than $p$ in its image. This, along with the following diagram:
\begin{equation*}
        \begin{tikzpicture}[baseline= (a).base]
        \node[scale=1] (a) at (0,0){
        \begin{tikzcd}
       \Hh^1(K,H) \arrow[r,"(\pi_1)_*"]\arrow[d,"(\pi_2)_*"] & \Hh^1(K,F)\arrow[d, "\delta_1"]\\
       \Hh^1(K,PSL_p) \arrow[r, "\delta_2"]& \Hh^2(K,\mu_p),
       \end{tikzcd}};
        \end{tikzpicture}
    \end{equation*}
which commutes up to a sign, leads to a contradiction. 
\end{itemize}
\end{proof}

\begin{oss}
In the previous proposition, if we assume that $F$ is a flasque torus, then we obtain the hypotheses of Proposition \ref{lemmafin1}.
\end{oss}

\subsection{The `Index' of an exact sequence}
\label{sub1}

In \cite{MR3055773}, Karpenko and Merkurjev studied the essential dimension of $p$-groups. In particular, they prove that the essential dimension of a finite $p$-group coincides with the smallest possible dimension of a faithful representation of the group. To achieve this goal, they established the Index Theorem, which can be found in \cite[Theorem 4.4]{MR3055773} or \cite[Section 6]{MR2393078}. Our interest in this theorem lies in its potential to enable the construction of some specific central simple algebra with index greater than $p$. In the first part of this section, we will recall some key definitions and state the Index Theorem. In the second part, we will study the `index' of an exact sequence and apply the results to the construction of an exact sequence as in Proposition \ref{main1}.

Consider an exact sequence of groups of multiplicative type over the field $k$
\begin{equation}
\tag{e}
        \begin{tikzpicture}[baseline= (a).base]
        \node[scale=1] (a) at (0,0){
        \begin{tikzcd}
       1 \arrow[r] & M_1\arrow[r, "\alpha"] & M \arrow[r]& M_2\arrow[r] & 1. 
       \end{tikzcd}};
        \end{tikzpicture}
\end{equation}

Let $\mathcal E \in \Hh^1(\Tilde{k},M_2)$ for some field extension $k\subset \Tilde{k}$. We define a group morphism $\beta^{\mathcal E} : \widehat M_1 (k) \rightarrow \text{Br}(\Tilde{k})$ as follows: for all $\chi \in \widehat M_1 (k)$, the class $\beta^{\mathcal E}(\chi)$ is the image of $\mathcal E$ by the composition morphism:
\begin{equation*}
        \begin{tikzpicture}[baseline= (a).base]
        \node[scale=1] (a) at (0,0){
        \begin{tikzcd}
      \text{H}^1(\Tilde{k},M_2) \arrow[r, "\delta"]&  \text{H}^2(\Tilde{k}, M_1) \arrow[r, "\chi_*"]& \text{H}^2(\Tilde{k},\mathbb G_m)=\text{Br}(\Tilde{k}).
       \end{tikzcd}};
        \end{tikzpicture}
\end{equation*}
We also define the morphism
\begin{equation*}
        \begin{tikzpicture}[baseline= (a).base]
        \node[scale=1] (a) at (0,0){
        \begin{tikzcd}
       \Hh^1(\Tilde{k},M_2) \arrow[r, "\beta"]& \text{Hom}_{\text{Ab}}(\widehat M_1 (k), \text{Br}(\Tilde{k})) 
       \end{tikzcd}};
        \end{tikzpicture}
\end{equation*}
that maps $\mathcal E$ to $\beta^{\mathcal E}$. 

\begin{oss}
This morphism is functorial in $\Tilde{k}$.
\end{oss}

We fix a character $\chi\in \widehat M_1 (k)$. Let $\text{Rep}^{(\chi)}(e)$ be the set of representations $\rho: M\rightarrow GL_n$ that fit into the following commutative diagram:
\begin{equation*}
        \begin{tikzpicture}[baseline= (a).base]
        \node[scale=1] (a) at (0,0){
        \begin{tikzcd}
      M_1 \arrow[r, "\alpha"]\arrow[d, "\chi"]&  M \arrow[d, "\rho"]\\
      \mathbb G_m\arrow[r, "\Delta"] & GL_n,
       \end{tikzcd}};
        \end{tikzpicture}
\end{equation*}
where $\Delta:\mathbb G_m\rightarrow GL_n$ is the canonical embedding of diagonal matrices. The index (relative to $\chi$) of the exact sequence $e$ is defined as $$I(\chi,e )\coloneqq \text{gcd}\Bigl\{n\ |\ \rho:M\rightarrow GL_n \in \text{Rep}^{(\chi)}(e)\Bigr\}.$$

To prove that $I(\chi,e)$ is well-defined, we need to show that the set $\text{Rep}^{(\chi)}(e)$ is not-empty. Before proceeding, we require a well-known lemma:

\begin{lemma}
\label{lemmadacitare}
Let $A=k_1\times \cdots \times k_r$ be a $n$-dimensional étale algebra over $k$. Then:
\begin{itemize}
    \item The group morphism
    \begin{equation*}
        \begin{tikzpicture}[baseline= (a).base]
        \node[scale=1] (a) at (0,0){
        \begin{tikzcd}
      R_{k_i/k}(\mathbb G_m)\arrow[r] &GL_{k}(k_i)
       \end{tikzcd}};
        \end{tikzpicture}
    \end{equation*}
given by the formula (on points)
    \begin{equation*}
        \begin{tikzpicture}[baseline= (a).base]
        \node[scale=1] (a) at (0,0){
        \begin{tikzcd}x \arrow[r] & (m_x:v\longmapsto v\cdot x)
       \end{tikzcd}};
        \end{tikzpicture}
    \end{equation*}
    is an embedding. Precomposition with the diagonal embedding $\mathbb G_m \rightarrow R_{k_i/k}(\mathbb G_m)$ gives the canonical embedding into the diagonal matrices. Conversely, the embedding of diagonal matrices factors through the above morphisms. 
    \item The composition
    $$R_{A/k}(\mathbb G_m)=R_{k_1/k}(\mathbb G_m)\times \cdots \times R_{k_r/k}(\mathbb G_m)\rightarrow GL_k(k_1)\times \cdots \times GL_k(k_r)\rightarrow GL_k(A)\simeq GL_n$$
    defines a maximal torus of $GL_n$. Up to conjugation, all maximal tori of $GL_n$ are of this form.
\end{itemize}
\end{lemma}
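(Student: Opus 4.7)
My plan is to verify both parts by working with functorial points and, for the classification in (2), invoking Galois descent. For part (1), I would unwind $R_{k_i/k}(\mathbb G_m)(R)=(k_i\otimes_k R)^*$ and send $x$ to the $R$-linear multiplication map $m_x:v\mapsto v\cdot x$ on $k_i\otimes_k R$. This is visibly a morphism of group schemes, and injectivity follows from the identity $m_x(1)=x$. Precomposition with the diagonal $\mathbb G_m\hookrightarrow R_{k_i/k}(\mathbb G_m)$, which on $R$-points is $\lambda\mapsto \lambda\otimes 1$, sends $\lambda$ to scalar multiplication on $k_i\otimes_k R$, represented by $\lambda\cdot I$ in any $k$-basis of $k_i$; the converse factorisation of diagonal matrices follows from the same computation.

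For part (2), the easy direction is a dimension count: each $R_{k_i/k}(\mathbb G_m)$ is an algebraic torus of dimension $[k_i:k]$, so $R_{A/k}(\mathbb G_m)=\prod_i R_{k_i/k}(\mathbb G_m)$ is an $n$-dimensional torus, and its image in $GL_n$ is a subtorus of dimension $n$, hence maximal. For the converse, given a maximal $k$-torus $T\subseteq GL_n$, I would form its centraliser $A:=C_{M_n(k)}(T)$ in the full matrix algebra. Over $\overline k$, the torus $T_{\overline k}$ is conjugate to the diagonal torus, whose centraliser in $M_n(\overline k)$ is the subalgebra $\overline k^n$ of diagonal matrices; hence $A\otimes_k\overline k\simeq \overline k^n$, and Galois descent identifies $A$ with an étale $k$-algebra $k_1\times\cdots\times k_r$ of dimension $n$. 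The standard representation $k^n$ is then a faithful $A$-module decomposing as $\bigoplus_i V_i$ with $V_i$ a $k_i$-vector space; the equalities $\sum_i[k_i:k]\dim_{k_i}V_i=n=\sum_i[k_i:k]$ together with $V_i\neq 0$ force $\dim_{k_i}V_i=1$, so $k^n\simeq A$ as $A$-modules. An $A$-linear isomorphism $A\xrightarrow{\sim}k^n$ amounts to a $k$-linear change of basis given by some $g\in GL_n(k)$, and by construction $gTg^{-1}$ coincides with the image of $R_{A/k}(\mathbb G_m)$ inside $GL_k(A)\simeq GL_n$.

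The main expected subtlety is showing that $C_{M_n(k)}(T)$ is genuinely an étale $k$-algebra of dimension exactly $n$ rather than merely a commutative $k$-subalgebra: this I would deduce from the descent argument above, since $A\otimes_k\overline k\simeq \overline k^n$ is reduced, commutative, and of the correct dimension, and each of these properties descends from $\overline k$ to $k$. Everything else reduces to the functoriality of Weil restriction and elementary linear algebra over étale algebras.
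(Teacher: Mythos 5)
Your proof is correct, and it is worth noting that the paper itself does not prove this lemma at all: it simply refers to the literature (Section 2 of the cited paper of Lee/``MR3039772'' and Section 6.1 of Voskresenski\u{\i}'s book), so your self-contained argument is genuinely more than what the text offers, and it follows the standard route found in those references: functoriality of Weil restriction plus the identity $m_x(1)=x$ for the embedding, a dimension count for maximality, and the centralizer construction $A=C_{M_n(k)}(T)$ with Galois descent for the converse. Two small points deserve to be made explicit rather than left implicit. First, the identification $C_{M_n(k)}(T)\otimes_k\overline k\simeq C_{M_n(\overline k)}(T_{\overline k})$ needs a word: either observe that the commutation conditions are linear equations and that $T(k)$ is Zariski dense in $T$ (true here since $\mathrm{char}\,k=0$), or note that the centralizer is the weight-zero subspace of the conjugation representation of $T$ on $M_n$, hence compatible with any base change. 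Second, at the end you need that $T$ \emph{equals}, not merely sits inside, the unit torus of $A$ acting on $k^n$: since $T$ is commutative it lies in $A$, hence in the $n$-dimensional torus $R_{A/k}(\mathbb G_m)\subseteq GL_n$ (acting through the module $k^n$), and maximality of $T$ (or connectedness plus equality of dimensions) forces equality; only then does conjugation by the matrix $g$ of an $A$-module isomorphism $A\simeq k^n$ carry $T$ onto the image of $R_{A/k}(\mathbb G_m)$ in $GL_k(A)\simeq GL_n$. Both are one-line remarks, not gaps, so your argument stands as a complete proof of the statement the paper only quotes.
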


\begin{proof}
See \cite[Section 2]{MR3039772} and \cite[Section 6.1]{MR1634406}.
\end{proof}

The following proposition has two implications. First of all, it will serve to establish that $\text{Rep}^{(\chi)}(e)$ is not-empty, and then, it will help to study $I(\chi,e)$.

\begin{prop}
\label{repsplit}
Consider the following properties:
\begin{enumerate}
    \item There exists a degree $n$ extension of $k$ that splits $\chi_*(e)$.
    \item There exists a representation $M\rightarrow GL_n \in \emph{Rep}^{(\chi)}(e)$.
\end{enumerate}
The implication $1 \implies 2$ always holds. The implication $2\implies 1$ holds if we assume the existence of an irreducible representation.
\end{prop}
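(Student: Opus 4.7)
The plan is to translate membership in $\text{Rep}^{(\chi)}(e)$ into the existence of a character of $M$ defined over a suitable \'etale $k$-algebra of dimension $n$, via Lemma \ref{lemmadacitare}. Concretely, I would first push out the exact sequence $e$ along $\chi:M_1\to \mathbb G_m$ to obtain an extension
$$1 \longrightarrow \mathbb G_m \longrightarrow M' \longrightarrow M_2 \longrightarrow 1,$$
and observe that a splitting of this pushout over a field extension $L/k$ is the same datum as a character $\chi'\in \widehat{M_L}(L)$ whose restriction to $M_{1,L}$ equals $\chi_L$. In parallel, by Lemma \ref{lemmadacitare} every maximal $k$-torus of $GL_n$ has the form $R_{A/k}(\mathbb G_m)$ for some \'etale $k$-algebra $A$ of dimension $n$, and by adjunction a $k$-morphism $M\to R_{A/k}(\mathbb G_m)$ is precisely an element of $\widehat{M_A}(A)$; moreover, the composition $\mathbb G_m\hookrightarrow R_{A/k}(\mathbb G_m)\hookrightarrow GL_n$ is the canonical diagonal embedding, so the condition $\rho\circ \alpha=\Delta\circ\chi$ translates cleanly to the restriction condition on characters.

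With this dictionary in hand, the implication $1\implies 2$ is immediate: given a degree $n$ field $L/k$ splitting $\chi_*(e)$, one picks a character $\chi'\in \widehat{M_L}(L)$ extending $\chi_L$ and defines $\rho:M\to R_{L/k}(\mathbb G_m)\hookrightarrow GL_n$ as the composition of the adjoint of $\chi'$ with the embedding of Lemma \ref{lemmadacitare}; the diagonal compatibility in that lemma then forces $\rho\circ\alpha=\Delta\circ\chi$.

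The more delicate direction is $2\implies 1$. Here I would use that, since $M$ is of multiplicative type and smooth in characteristic zero, every finite-dimensional $M$-representation decomposes over $\overline k$ as a direct sum of character eigenlines permuted by $\Gamma_k$; an irreducible $k$-representation of dimension $n$ therefore corresponds to a single $\Gamma_k$-orbit of characters of size $n$, each appearing with multiplicity one. Picking a character $\chi'$ in that orbit and letting $L$ be its field of definition (the fixed field of its stabilizer in $\Gamma_k$), one obtains $[L:k]=n$ together with an isomorphism $V\simeq R_{L/k}(L_{\chi'})$ of $M$-representations, where $L_{\chi'}$ denotes the one-dimensional $M_L$-representation attached to $\chi'$. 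This exhibits $\rho$ as factoring through $R_{L/k}(\mathbb G_m)\hookrightarrow GL_n$; reading the factorization through the Weil restriction adjunction, the identity $\rho\circ\alpha=\Delta\circ\chi$ becomes $\chi'|_{M_{1,L}}=\chi_L$, so that $L$ splits $\chi_*(e)$.

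The main obstacle is the irreducibility hypothesis in $2\implies 1$: without it, $\rho$ could factor through $R_{A/k}(\mathbb G_m)$ for a non-trivial product of fields $A=L_1\times\cdots\times L_r$, and no individual factor need have degree $n$ over $k$, so the conclusion on the degree of a splitting field genuinely fails. The remainder of the argument is essentially bookkeeping with the Weil restriction adjunction and the classification of maximal tori in $GL_n$ provided by Lemma \ref{lemmadacitare}.
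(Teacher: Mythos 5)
Your argument is correct and takes essentially the same route as the paper: both directions rest on the Weil-restriction adjunction together with the description of maximal tori of $GL_n$ from Lemma \ref{lemmadacitare}, with a splitting of $\chi_*(e)$ over $L$ identified with a character of $M_L$ extending $\chi_L$. The only deviation is in $2\implies 1$, where the paper simply cites the fact that a representation of a multiplicative-type group factors through a maximal torus $R_{A/k}(\mathbb G_m)$ (irreducibility then forcing $A$ to be a degree-$n$ field), whereas you re-derive this via the eigenline decomposition and Galois descent --- a slightly more self-contained treatment of the same step.
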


\begin{proof}
Extension of scalars and Weil restriction are adjoint functors. Therefore, for a field extension $k\subset k'$, there is the following commutative diagram 
\begin{equation*}
        \begin{tikzpicture}[baseline= (a).base]
        \node[scale=1] (a) at (0,0){
        \begin{tikzcd}
       M_1 \arrow[r, "\alpha"]\arrow[d, "\chi"]& M\arrow[d, dashed]\\
      \mathbb G_{m} \arrow[r, "\Delta"]& R_{k'/k}(\mathbb G_{m,k'}),
       \end{tikzcd}};
        \end{tikzpicture}
    \end{equation*}
if and only if there is the following one
\begin{equation*}
        \begin{tikzpicture}[baseline= (a).base]
        \node[scale=1] (a) at (0,0){
        \begin{tikzcd}
       M_{1,k'} \arrow[r, "\alpha_{k'}"]\arrow[d, "\chi_{k'}"]& M_{k'}\arrow[d, dashed]\\
      \mathbb G_{m,k'} \arrow[r, "\id"]& \mathbb G_{m,k'}
       \end{tikzcd}};
        \end{tikzpicture}
    \end{equation*}
if and only if $\chi_*(e)$ is split over $k'$. 

$1\implies 2$. Let $k'$ be an extension of $k$ of degree $n$ that splits $\chi_*(e)$. There is the following commutative diagram:
    \begin{equation*}
        \begin{tikzpicture}[baseline= (a).base]
        \node[scale=1] (a) at (0,0){
        \begin{tikzcd}
       M_1 \arrow[r, "\alpha"]\arrow[d, "\chi"]& M\arrow[d, "\rho"]\\
      \mathbb G_{m} \arrow[r, "\Delta"]& R_{k'/k}(\mathbb G_{m,k'}).
       \end{tikzcd}};
        \end{tikzpicture}
    \end{equation*}
    By Lemma \ref{lemmadacitare}, there exists an embedding $i:R_{k'/k}(\mathbb G_{m,k'})\rightarrow GL_n$ and the composition $i \circ \Delta $ is the canonical embedding of the diagonal matrices. By construction, the morphism $i \circ \rho$ is a representation of dimension $n$ in $\text{Rep}^{(\chi)}(e)$.

$2\implies 1$. By hypothesis, there is the following commutative diagram:
    \begin{equation*}
        \begin{tikzpicture}[baseline= (a).base]
        \node[scale=1] (a) at (0,0){
        \begin{tikzcd}
       M_1 \arrow[r, "\alpha"]\arrow[d, "\chi"]& M\arrow[d, "\rho"]\\
      \mathbb G_{m} \arrow[r, "\Delta"]& GL_n,
       \end{tikzcd}};
        \end{tikzpicture}
    \end{equation*}
    where $\rho$ is irreducible.
    Since $M$ is a group of multiplicative type, by \cite[Section 2]{MR3039772}, the representation $\rho$ factors trough a maximal torus $i:R_{A/k}(\mathbb G_{m,A})\rightarrow GL_n$, where $A$ is a $n$-dimensional étale algebra. Since $\rho$ is irreducible, the algebra $A$ is a finite extension of $k$ of degree $n$. The morphism $\mathbb G_m\rightarrow R_{A/k}(\mathbb G_{m,A})$ is the canonical embedding, by Lemma \ref{lemmadacitare}, then $\chi_*(e)$ is split over $A$.
\end{proof}

\begin{oss}
Proposition \ref{repsplit}, together with the fact that $\chi_*(e)$ splits over a finite extension of $k$, establishes that the index $I(\chi,e)$ is well-defined.
\end{oss}

\begin{cor}
The following identity holds:
$$I(e, \chi)=\emph{gcd}\Bigl\{[k':k]\ |\ k' \emph{ splits } \chi_*(e)\Bigr\}.$$
\end{cor}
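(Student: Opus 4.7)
The plan is to prove the two divisibility relations separately, using the two implications of Proposition \ref{repsplit} that was just proved. Write $d=\gcd\{[k':k]\ |\ k' \text{ splits }\chi_*(e)\}$; I want to show $I(\chi,e)=d$.

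For the divisibility $I(\chi,e)\mid d$, I would simply feed every splitting extension into the $1\Rightarrow 2$ direction of Proposition \ref{repsplit}. If $k'/k$ has degree $n$ and splits $\chi_*(e)$, that implication produces a representation $M\to GL_n$ in $\mathrm{Rep}^{(\chi)}(e)$, so $I(\chi,e)$ divides $n$. Taking the gcd over all such $n$ gives $I(\chi,e)\mid d$.

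For the opposite divisibility $d\mid I(\chi,e)$, I would fix a representation $\rho:M\to GL_n$ in $\mathrm{Rep}^{(\chi)}(e)$ and decompose it into irreducible $M$-subrepresentations $V=V_1\oplus\cdots\oplus V_r$, writing $n_i=\dim V_i$ and $n=\sum n_i$. The key observation is that membership in $\mathrm{Rep}^{(\chi)}(e)$ is preserved by this decomposition: indeed, $\rho(\alpha(m))=\chi(m)\cdot\mathrm{Id}_V$ is a scalar matrix, so it restricts to $\chi(m)\cdot\mathrm{Id}_{V_i}$ on each summand, which is exactly the diagonal embedding of $\chi$ into $GL_{n_i}$. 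Thus each irreducible component $\rho_i:M\to GL_{n_i}$ lies in $\mathrm{Rep}^{(\chi)}(e)$, and the $2\Rightarrow 1$ direction of Proposition \ref{repsplit} yields an extension of degree $n_i$ splitting $\chi_*(e)$. Consequently $d\mid n_i$ for every $i$, hence $d\mid n$. Since this holds for every $n$ occurring as the dimension of a member of $\mathrm{Rep}^{(\chi)}(e)$, we conclude $d\mid I(\chi,e)$.

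The only step that requires care is the preservation of the property $\rho\in\mathrm{Rep}^{(\chi)}(e)$ under passage to irreducible components, and this is where the hypothesis that the $2\Rightarrow 1$ direction needs irreducibility of the representation becomes essential. Once that observation is in hand, both divisibilities combine to give the stated equality.
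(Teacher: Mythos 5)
Your argument is correct and follows essentially the same route as the paper: both reduce the gcd to irreducible members of $\text{Rep}^{(\chi)}(e)$ (your scalar-matrix observation is exactly the paper's claim that this set is closed under passage to constituents) and then apply both implications of Proposition \ref{repsplit}. The only cosmetic difference is that you invoke a direct-sum decomposition into irreducibles (valid since $M$ is of multiplicative type in characteristic zero, hence linearly reductive), whereas the paper iterates the sub-representation/quotient step, which amounts to the same reduction.
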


\begin{proof}
First of all, we observe that there is the following identity:
$$\text{gcd}\Bigl\{n\ |\ \rho:M\rightarrow GL_n \in \text{Rep}^{(\chi)}(e)\Bigr\}=\text{gcd}\Bigl\{n\ |\ \rho:M\rightarrow GL_n \in \text{Rep}^{(\chi)}(e) \text{ irreducible}\Bigr\}.$$
Indeed, the set $\text{Rep}^{(\chi)}(e)$ is closed under sub-representations and quotients. Therefore, when we have $\rho:M\rightarrow GL_n \in \text{Rep}^{(\chi)}(e)$, we can consider an irreducible representation of $\rho$ and take the quotient. This process can be iterated using the quotient, ultimately allowing us to express $n$ as the sum of the dimensions of irreducible representations in $\text{Rep}^{(\chi)}(e)$. This implies that the greatest common divisor of irreducible representations divides the greatest common divisor of all representations in $\text{Rep}^{(\chi)}(e)$. The reverse divisibility is trivial. The statement now follows by Proposition \ref{repsplit}.
\end{proof}

Given $\rho: M\rightarrow GL_n \in \text{Rep}^{(\chi)}(e)$, there is the following commutative diagram with exact rows:
\begin{equation*}
        \begin{tikzpicture}[baseline= (a).base]
        \node[scale=1] (a) at (0,0){
        \begin{tikzcd}
      1 \arrow[r] & M_1 \arrow[r, "\alpha"] \arrow[d, "\chi"] & M \arrow[r]\arrow[d, "\rho"] & M_2 \arrow[r]\arrow[d] & 1 \\
      1 \arrow[r] & \mathbb G_m \arrow[r, "\Delta"] & GL_n \arrow[r] & PGL_n \arrow[r] & 1.
       \end{tikzcd}};
        \end{tikzpicture}
\end{equation*}
Such a diagram induces in cohomology the following commutative square, for all field extensions $k\subset \Tilde{k}$:
\begin{equation*}
        \begin{tikzpicture}[baseline= (a).base]
        \node[scale=1] (a) at (0,0){
        \begin{tikzcd}
      \Hh^1(\Tilde{k},M_2) \arrow[r]\arrow[d]&   \Hh^2(\Tilde{k},M_1) \arrow[d, "\chi_*"]\\
       \Hh^1(\Tilde{k}, PGL_n)\arrow[r] &  \Hh^2(\Tilde{k},\mathbb G_m).
       \end{tikzcd}};
        \end{tikzpicture}
\end{equation*}
This commutative square implies that $\text{ind}(\beta^{\mathcal E}(\chi))$ divides $I(\chi,e)$. 

\begin{oss}
If the extension $e$ is split or $\chi$ is trivial, then both numbers are one.
\end{oss}

We can embed the group $M_2$ into a quasi-trivial torus $P$ (see \cite[Lemma 0.6]{MR878473}):
\begin{equation*}
        \begin{tikzpicture}[baseline= (a).base]
        \node[scale=1] (a) at (0,0){
        \begin{tikzcd}
      1 \arrow[r] & M_2 \arrow[r]  & P \arrow[r] & P/M_2 \arrow[r] & 1. 
       \end{tikzcd}};
        \end{tikzpicture}        
\end{equation*}
The generic fibre of the $M_2$-torsor $P\rightarrow P/M_2$ is a generic $M_2$-torsor, in the sense of \cite[p. 425]{MR3055773}. 

\begin{oss}
\label{riferimento}
Such a generic torsor is defined over a finitely generated field extension of $k$.
\end{oss}

We can state the following theorem of Karpenko and Merkurjev (see \cite[Theorem 6.1]{MR3055773}):

\begin{thm}
\label{teoremakm}
Assume that $M_1$ is diagonalizable and suppose that ${\mathcal E}$ is the class of a generic $M_2$-torsor for a field extension $K$ of $k$. Then, $\emph{ind}(\beta^{\mathcal E}(\chi))= I(\chi,e)$. 
\end{thm}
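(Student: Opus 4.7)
The plan is to establish the equality $\text{ind}(\beta^{\mathcal{E}}(\chi)) = I(\chi, e)$ by proving the two divisibilities separately.

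One direction, $\text{ind}(\beta^{\mathcal{E}}(\chi)) \mid I(\chi, e)$, is essentially already contained in the discussion preceding the theorem and requires no genericity hypothesis. For any $\rho : M \to GL_n$ in $\text{Rep}^{(\chi)}(e)$, the commutative diagram
$$
\begin{tikzcd}
\Hh^1(\tilde{k}, M_2) \arrow[r] \arrow[d] & \Hh^2(\tilde{k}, M_1) \arrow[d, "\chi_*"] \\
\Hh^1(\tilde{k}, PGL_n) \arrow[r] & \Hh^2(\tilde{k}, \mathbb{G}_m)
\end{tikzcd}
$$
obtained from $\rho$ together with $1 \to \mathbb{G}_m \to GL_n \to PGL_n \to 1$ exhibits $\beta^{\mathcal{E}}(\chi)$ as the Brauer class of the central simple $\tilde{k}$-algebra of degree $n$ obtained by pushing $\mathcal{E}$ forward through $\rho$. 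Hence $\text{ind}(\beta^{\mathcal{E}}(\chi)) \mid n$, and taking the greatest common divisor over all $n$ realized in $\text{Rep}^{(\chi)}(e)$ yields the claim.

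For the reverse divisibility $I(\chi, e) \mid \text{ind}(\beta^{\mathcal{E}}(\chi))$, the genericity of $\mathcal{E}$ is essential. Set $m = \text{ind}(\beta^{\mathcal{E}}(\chi))$. By the corollary to Proposition \ref{repsplit}, it suffices to produce a finite étale $k$-algebra of degree $m$ splitting the pushed-out extension $\chi_*(e)$, equivalently a representation of dimension $m$ in $\text{Rep}^{(\chi)}(e)$. The starting input is a splitting field $L/K$ of degree $m$ for the central simple $K$-algebra $\beta^{\mathcal{E}}(\chi)$, which exists by the definition of the index. Since $\mathcal{E}$ is constructed as the generic fibre of the $M_2$-torsor $P \to P/M_2$ for the quasi-trivial torus $P$ of Remark \ref{riferimento}, we have $K = k(P/M_2)$, and $L$ is the function field of a geometrically integral $k$-variety $Y$ equipped with a dominant $k$-morphism $Y \to P/M_2$ of generic degree $m$. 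The heart of the Karpenko-Merkurjev argument is to descend the splitting data encoded by $Y$ to a $k$-rational representation of $M$ of dimension $m$ whose restriction to $M_1$ is diagonal via $\chi$, using that $P$ is a rational $k$-variety with trivial geometric Picard group, together with a no-name-type lemma applied to the $M$-action on $P$.

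The main obstacle is precisely this descent step. Without genericity, the index of $\beta^{\mathcal{E}}(\chi)$ can be strictly smaller than $I(\chi, e)$ for an arbitrary $\mathcal{E}$, since specializations may accidentally kill part of the obstruction. The versal property of the generic torsor is what guarantees that $\beta^{\mathcal{E}}(\chi)$ realizes the maximum possible index among all $M_2$-torsors over all extensions of $k$, which by the first divisibility is exactly $I(\chi, e)$; the technical challenge is to extract from the abstract splitting field $L$ of this generic Brauer class a concrete $k$-representation of $M$ in $\text{Rep}^{(\chi)}(e)$ of matching dimension.
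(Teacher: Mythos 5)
Note first that the paper does not prove this statement at all: Theorem \ref{teoremakm} is quoted as the Index Theorem of Karpenko and Merkurjev, so your proposal can only be measured against their argument. Your first divisibility, $\mathrm{ind}(\beta^{\mathcal E}(\chi)) \mid I(\chi,e)$, is correct, but it is exactly the observation the paper records immediately before the statement (the commutative square coming from a representation $\rho\in\mathrm{Rep}^{(\chi)}(e)$ and the sequence $1\to\mathbb G_m\to GL_n\to PGL_n\to 1$), and indeed it needs no genericity.

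The reverse divisibility -- the actual content of the theorem -- is not proved in your proposal. You reduce it, via the corollary to Proposition \ref{repsplit}, to producing a splitting field of $\chi_*(e)$ of degree $m=\mathrm{ind}(\beta^{\mathcal E}(\chi))$; but note that the corollary requires an extension of $k$ (equivalently a $k$-representation in $\mathrm{Rep}^{(\chi)}(e)$), whereas the splitting field $L$ you start from is only an extension of $K=k(P/M_2)$, a finitely generated extension of $k$. Bridging that gap is the whole substance of the theorem, and your paragraph beginning ``The heart of the Karpenko--Merkurjev argument'' merely restates the desired conclusion and then explicitly labels the descent step ``the main obstacle'' without carrying it out; appealing to rationality of $P$, triviality of the geometric Picard group and an unspecified ``no-name-type lemma'' is not an argument that a degree-$m$ splitting field of a Brauer class over $K$ yields a $k$-rational $m$-dimensional representation of $M$ with central character $\chi$. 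It is also not how Karpenko and Merkurjev prove it: their argument is K-theoretic, resting on an equivariant $K$-theory computation for the generic torsor (the image of the representation ring of $M$ in $K_0$ of the relevant twisted variety over the generic point is generated by restrictions of $k$-representations, which is where genericity enters, and this image computes the index of the $\chi$-component). As it stands, your proposal establishes only the easy inequality already noted in the paper and leaves the substantive half of the theorem open.
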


We have just seen a significant connection between the index of a specific class of central simple algebras and $I(\chi,e)$. This, in combination with the previous study of $I(\chi,e)$, reduces the process of constructing an exact sequence as in the statement of Proposition \ref{main1} to a group cohomology argument:

\begin{prop}
\label{propkm}
Let $k$ be a $p$-special field (i.e. $\Gamma_k$ is a pro-$p$-group). We suppose that there exist a $k$-algebraic torus $F$ and an element $z\in \Hh^1(k,\widehat F)$ of order $p$, such that for all degree $p$ extensions $k\subset k'$ the restriction $z_{k'}$ is non-trivial. Then, there exists an exact sequence of groups of multiplicative type over $k$
\begin{equation*}
        \begin{tikzpicture}[baseline= (a).base]
        \node[scale=1] (a) at (0,0){
        \begin{tikzcd}
       1 \arrow[r] & \mu_p\arrow[r] & R \arrow[r]& F\arrow[r] & 1
       \end{tikzcd}};
        \end{tikzpicture}
\end{equation*}
and a finitely generated field extension $k\subset K$, such that the image of the connecting morphism $\Hh^1(K,F)\rightarrow \Hh^2(K,\mu_p)$ contains the class of a central simple algebra of index greater than $p$.
\end{prop}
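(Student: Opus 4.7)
The strategy is to reduce the statement to the Karpenko--Merkurjev Index Theorem (Theorem \ref{teoremakm}). Since $k$ is $p$-special, $\Gamma_k$ is pro-$p$; hence $\mu_p\subset k$ and every finite extension of $k$ has $p$-power degree. Applying $\text{Ext}^*_{\Gamma_k}(-,\widehat F)$ to the short exact sequence $0\to\mathbb Z\xrightarrow{p}\mathbb Z\to\mathbb Z/p\to 0$, and using that $\widehat F$ is torsion-free, I would obtain the exact sequence
$$\widehat F(k)\xrightarrow{p}\widehat F(k)\longrightarrow \text{Ext}^1_{\Gamma_k}(\mathbb Z/p,\widehat F)\longrightarrow \Hh^1(k,\widehat F)[p]\longrightarrow 0.$$
Lift $z$ to a class $[\widehat R]\in\text{Ext}^1_{\Gamma_k}(\mathbb Z/p,\widehat F)$, i.e.\ an extension $0\to\widehat F\to\widehat R\to\mathbb Z/p\to 0$ of $\Gamma_k$-modules. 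Dualising via the anti-equivalence between groups of multiplicative type and finitely generated $\Gamma_k$-modules yields the required extension $e:\ 1\to\mu_p\to R\to F\to 1$.

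The next step is to identify $\chi_*(e)$, where $\chi\in\widehat{\mu_p}(k)$ is the nontrivial character (defined over $k$ since $\mu_p\subset k$). Pushing $e$ out along $\chi$ produces an extension whose character module is the pullback $\widehat R\times_{\mathbb Z/p}\mathbb Z$ of $[\widehat R]$ along $\mathbb Z\to\mathbb Z/p$. By the long exact sequence above, this pullback is precisely the image of $[\widehat R]$ under the boundary map $\text{Ext}^1_{\Gamma_k}(\mathbb Z/p,\widehat F)\to\Hh^1(k,\widehat F)$, which by construction equals $z$. Consequently $\chi_*(e)$ splits over a finite extension $k'/k$ if and only if $z_{k'}=0$ in $\Hh^1(k',\widehat F)$.

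By the corollary following Proposition \ref{repsplit},
$$I(\chi,e)=\gcd\bigl\{[k':k]:z_{k'}=0\bigr\}.$$
Since $k$ is $p$-special, each such $[k':k]$ is a power of $p$. The hypothesis rules out $k'=k$ (as $z\neq 0$) and all $k'$ with $[k':k]=p$; therefore every $k'$ killing $z$ satisfies $[k':k]\geq p^2$, whence $I(\chi,e)\geq p^2>p$. Finally, embedding $F$ into a quasi-trivial torus $P$ as in the setup preceding Theorem \ref{teoremakm}, let $\mathcal E\in\Hh^1(K,F)$ be the class of the generic $F$-torsor $P\to P/F$ over the finitely generated extension $K=k(P/F)$ (Remark \ref{riferimento}). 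Since $\mu_p$ is diagonalizable, Theorem \ref{teoremakm} applies and gives $\text{ind}(\beta^{\mathcal E}(\chi))=I(\chi,e)>p$. Because $\beta^{\mathcal E}(\chi)$ is the image of $\delta(\mathcal E)\in\Hh^2(K,\mu_p)$ under the Kummer injection $\Hh^2(K,\mu_p)\hookrightarrow\text{Br}(K)[p]$ (coming from Hilbert 90), the element $\delta(\mathcal E)$ lies in the image of the connecting morphism $\Hh^1(K,F)\to\Hh^2(K,\mu_p)$ and corresponds to a central simple algebra of index greater than $p$.

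The main obstacle is the identification carried out in the second paragraph: tracking that pushing out a multiplicative-type extension by $\chi$ corresponds on character modules to pulling back along $\mathbb Z\to\mathbb Z/p$, and that this pullback agrees with the boundary map $\text{Ext}^1_{\Gamma_k}(\mathbb Z/p,\widehat F)\to\Hh^1(k,\widehat F)$ in the long exact sequence used to lift $z$. Once this compatibility is in place, the rest is a direct combination of the $p$-power-degree constraint (from $k$ being $p$-special) with the Index Theorem.
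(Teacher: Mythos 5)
Your proof is correct and follows essentially the same route as the paper: lift $z$ to a $\mu_p$-extension $e$ of $F$ with $\chi_*(e)$ corresponding to $z$, use $p$-speciality together with the hypothesis to force $I(\chi,e)>p$ via the gcd formula from the corollary to Proposition \ref{repsplit}, and apply the Karpenko--Merkurjev theorem to the class of a generic torsor over the finitely generated field $K$. The only cosmetic difference is that you perform the lifting on character modules, via $\mathrm{Ext}^1_{\Gamma_k}(\mathbb Z/p,\widehat F)$ and the sequence $0\to\mathbb Z\xrightarrow{p}\mathbb Z\to\mathbb Z/p\to 0$, whereas the paper works with $\Hh^1(k,\widehat F)\simeq\mathrm{Ext}_k(F,\mathbb G_m)$ and lifts along $\mathrm{Ext}_k(F,\mu_p)\to\mathrm{Ext}_k(F,\mathbb G_m)$ using the Kummer sequence --- a dual formulation of the same step (note the map you call a ``boundary'' is simply the functorial map induced by $\mathbb Z\to\mathbb Z/p$).
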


\begin{proof}
There is the following chain of (functorial) isomorphisms: 
$$\Hh^1(k,\widehat F)\simeq \text{Pic}(F)\simeq \text{Ext}_k(F,\mathbb G_m),$$
where the first isomorphism is from \cite[Section 4.3]{MR0262251} and the second one is from \cite[Corollary 5.7]{MR2404747}. The cohomology class $z$ corresponds to an extension of $F$ by $\mathbb G_m$ of order $p$ that does not split over any degree $p$ extension of $k$. The exact sequence
\begin{equation*}
        \begin{tikzpicture}[baseline= (a).base]
        \node[scale=1] (a) at (0,0){
        \begin{tikzcd}
       1 \arrow[r] & \mu_p \arrow[r, "\chi"] & \mathbb G_m \arrow[r, "(-)^p"] & \mathbb G_m \arrow[r] & 1 
       \end{tikzcd}};
        \end{tikzpicture}
\end{equation*}
induces the following exact sequence:
\begin{equation*}
        \begin{tikzpicture}[baseline= (a).base]
        \node[scale=1] (a) at (0,0){
        \begin{tikzcd}
       \text{Ext}_k(F,\mu_p) \arrow[r, "{\chi}_*"] & \text{Ext}_k(F,\mathbb G_m)\arrow[r, "(-)^p"] & \text{Ext}_k(F,\mathbb G_m). 
       \end{tikzcd}};
        \end{tikzpicture}
\end{equation*}
From this, we can deduce the existence of an extension
\begin{equation}
\tag{e}
        \begin{tikzpicture}[baseline= (a).base]
        \node[scale=1] (a) at (0,0){
        \begin{tikzcd}
       1 \arrow[r] & \mu_p\arrow[r] & R \arrow[r]& F\arrow[r] & 1
       \end{tikzcd}};
        \end{tikzpicture}
\end{equation}
such that the pushforward by the canonical embedding $\chi:\mu_p\rightarrow \mathbb G_m$ does not split over any degree $p$ extension of $k$. The linear $k$-algebraic group $R$ is of multiplicative type, as it is an extension of an algebraic torus by a group of multiplicative type (see \cite[Proposition 4.1.4.5]{MR0302656}). Since $k$ is $p$-special, the greatest common divisor
$$\text{gcd}\Bigl\{[k':k]\ |\ k' \text{ splits } {\chi}_*(e)\Bigr\}.$$
is a minimum. This implies that $I(e,\chi)$ is greater than $p$ because there is no degree $p$ extension that splits ${\chi}_*(e)$. Let $K$ be a finitely generated extension of $k$ such that there exists a generic torsor $\mathcal E$ under $F$ defined over $K$ (see the construction after Remark \ref{riferimento}). By Theorem \ref{teoremakm}, the image of the class of $\mathcal E$ under the connecting morphism $\Hh^1(K,F)\rightarrow \Hh^2(K,\mu_p)$ has index $I(\chi,e)$. In particular, it is greater than $p$. 
\end{proof}

\subsection{Construction of the flasque torus}
\label{sub2}

The main goal of this subsection is to construct a flasque $\Gamma_k$-module $\widehat {F}$ and an element $z\in \Hh^1(k,\widehat {F})$ as in the statement of Proposition \ref{propkm}. 

First, we present two preliminary results. Consider a $\Gamma_k$-lattice $\widehat {F}$. We denote by $k_F$ the splitting field of $\widehat {F}$ (over $k$), i.e. the field that makes the following sequence exact:
$$1 \rightarrow \Gamma_{k_F} \rightarrow \Gamma_k \rightarrow \text{Aut}(\widehat {F}).$$

\begin{oss}
The field $k_F$ is a finite Galois extension of $k$. 
\end{oss}

We prove the following lemma:

\begin{lemma}
\label{tdg}
Let $z \in \emph{H}^1(k,\widehat {F})$ and let $k\subset l\subset \overline{k}$ be a finite Galois extension that splits $z$. Then $z$ already splits over $k_F\cap l$. 
\end{lemma}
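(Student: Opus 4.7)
The plan is to reduce the computation to finite group cohomology via inflation--restriction and then exploit the classical Galois-theoretic isomorphism $\mathrm{Gal}(k_F\cdot l/l)\simeq\mathrm{Gal}(k_F/k_F\cap l)$.

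The first step is to verify that $\Hh^1(k_F,\widehat F)=0$. By definition of $k_F$ the group $\Gamma_{k_F}$ acts trivially on $\widehat F$, so $\Hh^1(k_F,\widehat F)=\mathrm{Hom}_{\mathrm{cts}}(\Gamma_{k_F},\widehat F)$; since $\widehat F$ is a torsion-free finitely generated abelian group with the discrete topology and $\Gamma_{k_F}$ is profinite, every continuous homomorphism has finite image and hence is zero. The same argument applied to $N\coloneqq k_F\cdot l$ (whose absolute Galois group sits inside $\Gamma_{k_F}$) yields $\Hh^1(N,\widehat F)=0$.

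Setting $M\coloneqq k_F\cap l$, the extensions $k_F/k$, $k_F/M$ and $N/l$ are all Galois (the last one because $k_F/k$ is). Plugging the vanishings above into the five-term inflation--restriction sequences associated to these Galois extensions turns the three inflation maps $\Hh^1(\mathrm{Gal}(k_F/k),\widehat F)\to \Hh^1(k,\widehat F)$, $\Hh^1(\mathrm{Gal}(k_F/M),\widehat F)\to \Hh^1(M,\widehat F)$ and $\Hh^1(\mathrm{Gal}(N/l),\widehat F)\to \Hh^1(l,\widehat F)$ into isomorphisms. Since $k_F/k$ is Galois, the restriction map induces an isomorphism of finite Galois groups $\mathrm{Gal}(N/l)\simeq\mathrm{Gal}(k_F/M)$, so the group-cohomological restriction $\Hh^1(\mathrm{Gal}(k_F/M),\widehat F)\to\Hh^1(\mathrm{Gal}(N/l),\widehat F)$ is itself an isomorphism.

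Assembling this into the commutative diagram of inflation and restriction maps, the Galois-cohomological restriction $\Hh^1(M,\widehat F)\to\Hh^1(l,\widehat F)$ is identified with an isomorphism; hence the vanishing of the image of $z$ in $\Hh^1(l,\widehat F)$ forces its vanishing already in $\Hh^1(k_F\cap l,\widehat F)$, as desired. I do not expect a real obstacle here: the only care required is the routine commutativity check of the inflation--restriction squares, which is automatic from functoriality of the five-term sequence.
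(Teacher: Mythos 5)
Your proof is correct and follows essentially the same route as the paper: both arguments reduce to finite Galois group cohomology via the vanishing of $\Hh^1$ over (the compositum with) the splitting field, and then invoke the isomorphism $\mathrm{Gal}(k_F l/l)\simeq\mathrm{Gal}(k_F/k_F\cap l)$ to identify the restriction map with an isomorphism. The only difference is organizational: the paper first reduces to the case $k_F\cap l=k$ (using that the splitting field over any finite extension $k'$ is $k'k_F$), whereas you work directly over $M=k_F\cap l$, which is equally valid.
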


\begin{proof}
We proceed in two steps:
\begin{itemize}
    \item Let $k\subset k'\subset \overline{k}$ be a finite field extension. The composite field of $k'$ and $k_F$ is the splitting field of $\widehat {F}$ over $k'$, that we denote by $k'_F$. Indeed, the field $k'_F$ contains $k'$ by definition. Moreover, since $\Gamma_{k'_F}$ is in the kernel of $\Gamma_k \rightarrow \text{Aut}(\widehat {F})$, the field $k'_F$ contains $k_F$. This proves that $k'_F$ contains $k'k_F$. For the opposite inclusion, we note that $\Gamma_{k'k_F}$ acts trivially on $\widehat {F}$, thus it is in the kernel of $\Gamma_{k'} \rightarrow \text{Aut}(\widehat {F})$. 
    \item By the first item, the splitting field of $\widehat {F}$ over $k_F\cap l$ is $(k_F\cap l)k_F=k_F$. Therefore, we can assume, without loss of generality, that $k_F\cap l =k$. Both extensions $k\subset l$ and $k\subset k_F$ are Galois and their intersection is $k$, so the canonical restriction map
    $$\text{Gal}(lk_F/l)\rightarrow \text{Gal}(k_F/k)$$
    is an isomorphism. Moreover, again by the first item, there is an identity $lk_F = l_F$. We have the following commutative diagram
    \begin{equation*}
        \begin{tikzpicture}[baseline= (a).base]
        \node[scale=1] (a) at (0,0){
        \begin{tikzcd}
      \text{H}^1(k,\widehat {F})\arrow[r] &  \text{H}^1(l,\widehat {F})\\
        \text{H}^1(\text{Gal}(k_F/k),\widehat {F})\arrow[r, "\sim"]\arrow[u, "\sim"] & \text{H}^1(\text{Gal}(lk_F/l),\widehat {F}).\arrow[u, "\sim"]
       \end{tikzcd}};
        \end{tikzpicture}
\end{equation*}
    Thus, the restriction map $\text{H}^1(k,\widehat {F})\rightarrow \text{H}^1(l,\widehat {F})$ is an isomorphism, which means $z_l=0$ if and only if $z=0$. 
\end{itemize}
\end{proof}

We proceed with the construction of the flasque module. Consider a finite non-trivial $p$-group $\Gamma$, and denote by $(\Lambda_i)_{i\in I}$ the family of its subgroups of index $p$. We assume that $\Lambda_i$ is not cyclic for all $i\in I$, for example $\Gamma=(\mathbb Z/ p \mathbb Z)^3$.

\begin{prop}
\label{esistenzaflasque}
There exists a flasque $\Gamma$-lattice $\widehat {F}$ that verifies the following property: there exists a $p$-torsion element $z \in \Hh^1(\Gamma,\widehat {F})$ such that, for all $i\in I$, the restriction of $z$ to $\Hh^1(\Lambda_i, \widehat F)$ is non-trivial.
\end{prop}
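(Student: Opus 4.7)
The strategy is to build $\widehat F$ via a flasque resolution $0\to N\to P\to \widehat F\to 0$ of a carefully chosen $\Gamma$-lattice $N$, and to obtain $z$ as the image under the connecting homomorphism of a class $\xi\in\Hh^2(\Gamma,N)$ whose restriction to every $\Lambda_i$ is non-trivial. A preliminary remark explains why the non-cyclic hypothesis is essential: for any flasque $\Gamma$-lattice $\widehat F$ and any cyclic subgroup $C\leq\Gamma$, the $2$-periodicity of Tate cohomology for cyclic groups identifies $\Hh^1(C,\widehat F)$ with $\widehat{\mathrm{H}}^{-1}(C,\widehat F)$, which vanishes by the very definition of flasqueness. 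Hence if some $\Lambda_i$ were cyclic, the restriction of any class in $\Hh^1(\Gamma,\widehat F)$ to $\Lambda_i$ would automatically vanish, and the conclusion of the proposition would fail.

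For the seed class, I would take $N=\bigoplus_{i\in I}\mathbb{Z}[\Gamma/\Lambda_i]$, so that Shapiro's lemma gives $\Hh^2(\Gamma,N)\simeq\bigoplus_{i\in I}\Lambda_i^\vee$ (a direct sum of $p$-torsion groups), and select $\xi=(\xi_i)_{i\in I}$ with each $\xi_i\in\Lambda_i^\vee$ of order $p$. Since every index-$p$ subgroup of a $p$-group is normal, the Mackey decomposition at a fixed $j\in I$ yields $\mathbb{Z}[\Gamma/\Lambda_j]|_{\Lambda_j}=\mathbb{Z}^p$ with trivial $\Lambda_j$-action; consequently the $j$-th component of $\mathrm{res}_{\Lambda_j}(\xi)$ sits in $(\Lambda_j^\vee)^p$ as the diagonal copy of $\xi_j$, which is non-zero. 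This forces $\mathrm{res}_{\Lambda_j}(\xi)\neq 0$ in $\Hh^2(\Lambda_j,N)$ for every $j\in I$, and $\xi$ is $p$-torsion by construction.

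Next, I would apply the Colliot-Thélène--Sansuc lemma to obtain a flasque resolution $0\to N\to P\to\widehat F\to 0$ with $P$ permutation and $\widehat F$ flasque. The long exact sequence
\[\Hh^1(\Gamma,P)\longrightarrow\Hh^1(\Gamma,\widehat F)\xrightarrow{\partial}\Hh^2(\Gamma,N)\longrightarrow\Hh^2(\Gamma,P),\]
combined with the vanishing $\Hh^1(\Gamma,P)=0$ given by Shapiro, shows that $\partial$ is injective; the same holds at each $\Lambda_i$. If the chosen $\xi$ lies in $\ker(\Hh^2(\Gamma,N)\to\Hh^2(\Gamma,P))$, then $\xi=\partial(z)$ for a unique $p$-torsion class $z\in\Hh^1(\Gamma,\widehat F)$, and functoriality of $\partial$ with respect to restriction forces $\mathrm{res}_{\Lambda_i}(z)\neq 0$ in $\Hh^1(\Lambda_i,\widehat F)$, as required.

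The principal obstacle is the compatibility condition between the flasque resolution and the seed class: one must simultaneously guarantee that $\widehat F$ is flasque and that $\xi$ is killed by the map $\Hh^2(\Gamma,N)\to\Hh^2(\Gamma,P)$. I expect to resolve this either by enlarging $N$ with additional permutation summands tailored to kill the image of $\xi$ in $\Hh^2(\Gamma,P)$ (which preserves all the non-vanishing restrictions), or by replacing $N$ above with a better-behaved sub-lattice such as the kernel of the total augmentation $\bigoplus_i\mathbb{Z}[\Gamma/\Lambda_i]\to\mathbb{Z}$, which affords finer control on the kernel of $\Hh^2(\Gamma,N)\to\Hh^2(\Gamma,P)$. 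In either approach, the non-cyclicity of the $\Lambda_i$ is precisely what makes the balancing possible, by ensuring that the groups $\Hh^2(\Lambda_i,-)$ are rich enough to sustain a single universally non-vanishing class.
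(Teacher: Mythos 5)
Your reduction is set up correctly as far as it goes: with $N=\bigoplus_{i}\mathbb{Z}[\Gamma/\Lambda_i]$ the class $\xi=(\xi_i)$ does restrict non-trivially to each $\Lambda_j$ (strictly speaking the Mackey components are the conjugates of $\xi_j$ rather than a literal diagonal unless $\Gamma$ is abelian, but they are non-zero in any case), and if $\xi$ lifts through the connecting map of an exact sequence $0\to N\to P\to\widehat F\to 0$ with $P$ permutation and $\widehat F$ flasque, then functoriality of $\partial$ indeed produces the desired $z$. The genuine gap is exactly the step you yourself flag as ``the principal obstacle'': you never exhibit an embedding $N\hookrightarrow P$ into a permutation lattice with flasque cokernel which kills $\xi$ in $\mathrm{H}^2(\Gamma,P)$, and this is where the entire difficulty of the proposition is concentrated. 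The tension is concrete. From the Tate long exact sequence, and since $\widehat{\mathrm{H}}^{-1}(\Lambda,P)=0$ for permutation $P$ while $\widehat{\mathrm{H}}^{-1}(\Lambda,F)$ is Pontryagin dual to $\mathrm{H}^1(\Lambda,F^0)$, flasqueness of $F=P/N$ is \emph{equivalent} to injectivity of $\widehat{\mathrm{H}}^{0}(\Lambda,N)\to\widehat{\mathrm{H}}^{0}(\Lambda,P)$ for every $\Lambda\leq\Gamma$. The natural ways to kill $\xi$ violate this: the embedding of $N$ into the cohomologically trivial module $N\otimes\mathbb{Z}[\Gamma]$ kills everything in degree $2$ but its cokernel $C$ has $\widehat{\mathrm{H}}^{-1}(\Lambda,C)\simeq\widehat{\mathrm{H}}^{0}(\Lambda,N)\neq 0$, so $C$ is not flasque; and the ``sum over fibres'' map $\mathbb{Z}[\Gamma/\Lambda_i]\to\mathbb{Z}[\Gamma/\ker\xi_i]$, which induces restriction and so kills $\xi_i$, already fails $\widehat{\mathrm{H}}^{0}$-injectivity at $\Lambda=\Gamma$ (it sends generator to generator, i.e.\ it is the non-injective map $\mathbb{Z}/|\Lambda_i|\to\mathbb{Z}/|\ker\xi_i|$). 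Conversely, any embedding containing $\mathrm{id}_N$ as a summand keeps the cokernel under control but cannot kill $\xi$. Your two proposed repairs are not carried out and do not obviously resolve this: enlarging $N$ by permutation summands (with $\xi$ extended by zero) does not by itself change the image of $\xi$, and replacing $N$ by the augmentation kernel changes $\mathrm{H}^2(\Gamma,N)$, so both the existence of a suitable seed and its vanishing in $\mathrm{H}^2(\Gamma,P)$ would have to be re-established. As written, you have reduced the proposition to an unproved existence statement of essentially the same depth.

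For comparison, the paper's proof avoids any such compatibility issue: for each $i$ it invokes Colliot-Th\'el\`ene--Sansuc (Corollary 2 of the $R$-\'equivalence paper, and this is precisely where the non-cyclicity of $\Lambda_i$ enters) to obtain a flasque $\Lambda_i$-lattice $\widehat F_i'$ with $\mathrm{H}^1(\Lambda_i',\widehat F_i')\neq 0$ for some subgroup $\Lambda_i'<\Lambda_i$, coinduces it to $\Gamma$ (this preserves flasqueness, gives $\mathrm{H}^1(\Gamma,\widehat F_i)\simeq \mathrm{H}^1(\Lambda_i',\widehat F_i')$ by Shapiro, and makes the restriction to $\Lambda_i$ an injective diagonal map), picks $z_i$ of order $p$, and takes $\widehat F=\bigoplus_i\widehat F_i$ with $z=(z_i)$. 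Your preliminary remark on cyclic subgroups is correct and nicely explains why non-cyclicity is necessary, but it does not supply the constructive input that the coinduction argument (or a completed version of your killing step) provides.
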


\begin{proof}
By \cite[Corollary 2]{requiv} we can find a flasque $\Lambda_i$-module $\widehat{F}'_i$ for which $\Hh^1(\Lambda_i', \widehat{F}'_i)\neq 0$ holds for some subgroup $\Lambda_i'<\Lambda_i$. We denote the coinduced modules $\text{CoInd}_{\Lambda_i'}^{\Gamma}(\widehat{F}'_i)$ by $\widehat F_i$. The $\Gamma$-module $\widehat F_i$ has the following properties:
    \begin{itemize}
        \item It is flasque (see \cite[Remark 2 and Remark 3]{requiv}).
        \item The group $\Hh^1(\Gamma,\widehat F_i)\simeq \Hh^1(\Lambda_i', \widehat{F}'_i)$ is non-zero.
        \item The restriction map 
        $$\Hh^1(\Gamma,\widehat F_i)\simeq \Hh^1\Bigl(\Lambda_i, \text{CoInd}_{\Lambda_i'}^{\Lambda_i}(\widehat{F}'_i)\Bigr)\overset{\Delta}{\longrightarrow} \Hh^1\Bigl(\Lambda_i, \text{CoInd}_{\Lambda_i'}^{\Lambda_i}(\widehat{F}'_i)\Bigr)^p\simeq \Hh^1(\Lambda_i,\widehat F_i)$$
        is the diagonal map. In particular, it is injective.
    \end{itemize}
Now, let $z_i$ be an element of order $p$ in $\Hh^1(\Gamma,\widehat F_i)$. We denote by $\widehat F$ the direct sum of all $\widehat F_i$, and let $z=(z_i)_{i\in I} \in \bigoplus_{i\in I} \Hh^1(\Gamma,\widehat F_i)=\Hh^1(\Gamma,\widehat F)$. By construction, the order of $z$ is $p$ and $z$ does not belong to the union:
$$\bigcup_{i\in I} \text{ker}\Bigl(\Hh^1(\Gamma,\widehat F)\rightarrow \Hh^1(\Lambda_i,\widehat F)\Bigr).$$
\end{proof}

\begin{esempio}
We provide a concrete example using the smallest possible group $\Gamma=(\mathbb Z/ p\mathbb Z)^3$, indeed all the group of cardinality dividing $p^2$ have index $p$ subgroups that are cyclic. All of its index $p$ subgroups are isomorphic to $\Lambda=(\mathbb Z/p \mathbb Z)^2$. Consider the following exact sequence:
\begin{equation*}
        \begin{tikzpicture}[baseline= (a).base]
        \node[scale=1] (a) at (0,0){
        \begin{tikzcd}
       0 \arrow[r]& \Tilde{F}^0\arrow[r] & \mathbb Z[\Lambda]^2\arrow[r, "\phi"] & \mathbb Z[\Lambda]\arrow[r, "\epsilon"] & \mathbb Z\arrow[r] & 0, 
       \end{tikzcd}};
        \end{tikzpicture}
\end{equation*}
where $\epsilon (\sum_{\lambda\in \Lambda} n_{\lambda}\lambda)=\sum_{\lambda\in \Lambda} n_{\lambda}$ and $\phi(\sum_{\lambda\in \Lambda}n_{\lambda}^1\lambda,\sum_{\lambda\in \Lambda}n_{\lambda}^2\lambda )=\sum_{\lambda\in \Lambda}n_{\lambda}^1\lambda(\lambda_1-1)+n_{\lambda}^2\lambda(\lambda_2-1)$ for two fixed generators $\lambda_1$ and $\lambda_2$ of $\Lambda$. According to \cite[Proposition 1]{requiv}, the $\Lambda$-module $\Tilde{F}$ is flasque. Additionally, there exists an isomorphism $\Hh^1(\Lambda, \Tilde{F})\simeq \Hh^3(\Lambda,\mathbb Z)\simeq \mathbb Z/p \mathbb Z$. This implies that for all $i\in I$ we can take $\Tilde{F}_i\coloneqq \Tilde{F}$. 
\end{esempio}

Fix a Galois field extension $k_0\subset l_0$ with group $\Gamma$ as above. Let $k$ be the fixed field of a pro-$p$-Sylow subgroup of $\Gamma_{k_0}$. Let $l$ be the composite field of $k$ and $l_0$. Let $\widehat F$ be a flasque $\Gamma$-module as in the statement of Proposition \ref{esistenzaflasque}. 

\begin{oss}
The extension $k\subset l$ is Galois with group $\Gamma$, $k$ is $p$-special and there is an isomorphism $\Hh^1(k,\widehat F)\simeq \Hh^1(\Gamma,\widehat F)$. Furthermore, we note that $k_F$ contains $l$. 
\end{oss}

We can state the main result of this section:

\begin{prop}
\label{finnale}
There exist a $p$-special field $k$, a flasque $\Gamma_k$-module $\widehat F$ and an element $z\in \Hh^1(k,\widehat F)$ of order $p$, such that for all degree $p$ extensions $k\subset k'\subset \overline{k}$ the restriction $z_{k'}$ is non-trivial.
\end{prop}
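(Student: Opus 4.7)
The plan is to take the field $k$, flasque module $\widehat{F}$ and class $z$ assembled by the preceding constructions, and verify the property through a short contradiction combining Lemma \ref{tdg} with Proposition \ref{esistenzaflasque}.

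First I would dispose of the formal properties. The field $k$ is $p$-special because it is the fixed field of a pro-$p$-Sylow subgroup of $\Gamma_{k_0}$. The $\Gamma_k$-module $\widehat F$ is flasque because it is flasque as $\Gamma$-module by Proposition \ref{esistenzaflasque}, and the $\Gamma_k$-action on it factors through $\Gamma=\text{Gal}(l/k)$, so flasqueness is inherited. Finally, $z$ has order exactly $p$ in $\Hh^1(k,\widehat F)\simeq \Hh^1(\Gamma,\widehat F)$ by the same proposition.

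For the non-splitting claim, fix a degree-$p$ extension $k\subseteq k'\subseteq \overline{k}$. Since $\Gamma_k$ is a pro-$p$-group, every open subgroup of index $p$ is normal (its quotient is the abelian simple group $\mathbb{Z}/p\mathbb{Z}$), so $k'/k$ is automatically cyclic Galois of order $p$ and Lemma \ref{tdg} applies. Arguing by contradiction, suppose $z_{k'}=0$; the lemma then forces $z$ to split already over $k_F\cap k'$. Because $[k':k]=p$ is prime, the intermediate field $k_F\cap k'$ is either $k$ or $k'$; the first case is excluded by $z\neq 0$, so we obtain $k'\subseteq k_F$.

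The last step is to pin $k'$ to one of the $\Lambda_i$. The inclusion $k_F\subseteq l$ holds because the $\Gamma_k$-action on $\widehat F$ factors through $\Gamma$, so $\Gamma_l\subseteq \ker(\Gamma_k\to \Aut(\widehat F))$; combined with the reverse inclusion from the preceding remark this gives $k_F=l$. Therefore $k\subseteq k'\subseteq l$ is an intermediate subfield of the Galois extension $l/k$ with Galois group $\Gamma$, and $k'$ corresponds to an index-$p$ subgroup, i.e.\ to some $\Lambda_i$. Under the inflation isomorphism $\Hh^1(k',\widehat F)\simeq \Hh^1(\Lambda_i,\widehat F)$ the class $z_{k'}$ is identified with $\text{Res}^{\Gamma}_{\Lambda_i}(z)$, which is non-zero by the second bullet of Proposition \ref{esistenzaflasque}, yielding the desired contradiction. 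The main point to keep straight is the compatibility between the Galois-theoretic restriction $\Hh^1(k,\widehat F)\to \Hh^1(k',\widehat F)$ and the group-cohomological restriction $\Hh^1(\Gamma,\widehat F)\to \Hh^1(\Lambda_i,\widehat F)$ along $k\subseteq k'\subseteq l$; once these identifications are in place the proof needs no further computation.
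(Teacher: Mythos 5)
Your proposal is correct and follows essentially the same route as the paper: it invokes the construction preceding the statement (Proposition \ref{esistenzaflasque} together with the field setup and the inflation identification $\Hh^1(k,\widehat F)\simeq \Hh^1(\Gamma,\widehat F)$) and then uses Lemma \ref{tdg} to pass from degree-$p$ subextensions of $l/k$ to arbitrary degree-$p$ extensions, exactly as the paper does, only with the case analysis on $k_F\cap k'$ and the Galois/normality points spelled out in more detail. The only cosmetic slip is the reference to a ``second bullet'' of Proposition \ref{esistenzaflasque}, whose statement has no itemized list; and note that only the inclusion $k_F\subseteq l$ (which you derive yourself) is actually needed, not the equality $k_F=l$.
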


\begin{proof}
By the previous construction, there exist a flasque module over a $p$-special field $k$ and $z\in \Hh^1(k,\widehat F)$ of order $p$ such that $z_{k'}$ is non-trivial for all degree $p$ extensions $k\subset k'$ that are subextensions of $k\subset l$. By Lemma \ref{tdg}, this is sufficient to deduce that $z$ is non-trivial over all degree $p$ extensions of $k$.
\end{proof}

The previous proposition ends the construction of the homogeneous space over a finitely generated field extension of $k$. In the next section, we prove that this example can be realised over a field of cohomological dimension $2$.

\subsection{Reduction to a field of cohomological dimension $2$}
\label{sub3}

In the previous subsections, we have constructed an example of homogeneous space, defined over a field $K$, for which the evaluation map induced by a flasque quasi-resolution is not surjective. Now, we want to find a field extension $K^{(\infty)}$ of $K$ that satisfies the following properties:
\begin{itemize}
    \item the field $K^{(\infty)}$ has cohomological dimension at most $2$,
    \item for all central simple algebras $A$ over $K$, there is the equality $\text{ind}(A)=\text{ind}(A\otimes_K K^{(\infty)})$.
\end{itemize}
This is enough to enough to find the searched homogeneous space over a field of cohomological dimension $2$. Indeed, we can apply the field extension $K\subset K^{(\infty)}$ to the field  $K$ that appears in Proposition \ref{propkm}, that is the field of definition of a generic torsor. Indeed, by Proposition \ref{finnale} and Proposition \ref{propkm}, we have an exact sequence of groups of multiplicative type over $k$
    \begin{equation*}
        \begin{tikzpicture}[baseline= (a).base]
        \node[scale=1] (a) at (0,0){
        \begin{tikzcd}
       1 \arrow[r] & \mu_p\arrow[r] & R \arrow[r]& F\arrow[r] & 1,
       \end{tikzcd}};
        \end{tikzpicture}
    \end{equation*}
    where $F$ is a flasque torus, and we also have a finitely generated field extension $k\subset K$, such that the image of the connecting morphism $\Hh^1(K,F)\rightarrow \Hh^2(K,\mu_p)$ contains the class of a central simple algebra of index greater than $p^2$. By replacing $K$ with $K^{(\infty)}$, we obtain a field of cohomological dimension $2$ and, because of the second property mentioned earlier, the image of the connecting morphism $\Hh^1(K^{(\infty)},F)\rightarrow \Hh^2(K^{(\infty)},\mu_p)$ still contains the class of a central simple algebra of index greater than $p^2$. Now, we can use Proposition \ref{main1} and Proposition \ref{lemmafin1} to conclude the construction and to prove Theorem \ref{thm1}.

\begin{oss}
Following this construction we obtain a homogeneous space $X=G/H$ for which the evaluation map is not surjective. Furthermore, the group $H$ is the extension of a flasque torus by the group $SL_p$. This implies that if we choose $G$ in a way that $G(K)/R$ is trivial, for example, by choosing $G$ as $GL_n$, then the evaluation map is injective (see \cite[Proposition 7.11]{finto}).
\end{oss}

The idea is to use a construction of Ducros (see \cite[pp. 351-352]{MR1622783}) and to apply a theorem of Panin (see \cite[Corollary 5.2]{MR1618404}). In the rest of the subsection we only assume that $K$ has characteristic zero.

Let $\mathcal F$ be a collection of representatives of birational classes of non-trivial $R_{L/K}(SL_1(A))$-torsors over $K$, where $L$ ranges over finite extensions of $K$ and $A$ varies over central simple algebras over $L$. 

For each finite non-empty subset $\mathcal U=\{X_1, \cdots, X_r\}$ of $\mathcal F$ we define $K_{\mathcal U}$ to be the fraction field of the product $X_1\times_K \cdots \times_K X_r$. If $\mathcal U$ is the empty set, then $K_{\mathcal U}\coloneqq K$.

\begin{oss}
The field $K_{\mathcal U}$ is well defined, indeed $X_1\times_K \cdots \times_K X_r$ is a (geometrically) integral $K$-algebraic variety. 
\end{oss}

For each inclusion $\mathcal U\subset \mathcal V$ of subsets of $\mathcal F$, there is an inclusion of fields $K_{\mathcal U}\subset  K_{\mathcal V}$. Then we can take the following colimit:
$$K^{(1)}\coloneqq\lim_{\longrightarrow } K_{\mathcal U}.$$
The construction can be iterated, defining
$$K^{(n+1)}\coloneqq (K^{(n)})^{(1)}.$$
We denote by $K^{(\infty)}$ the union of all such fields:
$$K\subset K^{(1)}\subset K^{(2)}\subset K^{(3)}\subset ...$$
In the following part of this section, we will prove that $K^{(\infty)}$ satisfies the aforementioned properties.

\begin{lemma}
\label{lemmacohomo2}
Let $D$ be a division algebra over $K$. The central simple algebra $D\otimes_K K^{(\infty)}$ is also a division algebra. Consequently, if $A$ is a central simple algebra over $K$, then $\emph{ind} (A)=\emph{ind} (A\otimes_K K^{(\infty)})$.
\end{lemma}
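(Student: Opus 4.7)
The plan is to exploit the filtered colimit structure of $K^{(\infty)}$ and reduce the statement to a single application of Panin's theorem. Since $K^{(\infty)} = \bigcup_n K^{(n)}$ and tensor products commute with filtered colimits, any zero divisor of $D \otimes_K K^{(\infty)}$ would already appear in some $D \otimes_K K^{(n)}$; hence by induction on $n$ it is enough to show that if $D$ is a division algebra over a field $K$ then $D \otimes_K K^{(1)}$ is also a division algebra. Using $K^{(1)} = \varinjlim_{\mathcal{U}} K_{\mathcal{U}}$, the same argument further reduces the problem to proving that $D \otimes_K K_{\mathcal{U}}$ remains a division algebra for each finite subset $\mathcal{U} = \{X_1, \ldots, X_r\} \subset \mathcal{F}$.

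Next, I would build the tower of fields
$$K \subset \mathrm{Frac}(X_1) \subset \mathrm{Frac}(X_1 \times_K X_2) \subset \cdots \subset K_{\mathcal{U}},$$
noting that at each step one is passing to the function field of the base change of the next $X_i$ to the previous field. By induction on $r$, the problem reduces to the following single step: if $K'$ is a field extension of $K$, $D'$ is a division algebra over $K'$, and $X'$ is a torsor over $K'$ under a group of the form $R_{L/K'}(SL_1(A))$ (with $L/K'$ a finite extension and $A$ a central simple algebra over $L$), then $D' \otimes_{K'} K'(X')$ is still a division algebra. This is precisely where Panin's theorem \cite[Corollary 5.2]{MR1618404} enters: it asserts that for $X'$ as above, the base-change map $\mathrm{Br}(K') \to \mathrm{Br}(K'(X'))$ preserves the index of every Brauer class. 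Applied to $[D']$, this yields $\mathrm{ind}(D' \otimes_{K'} K'(X')) = \mathrm{ind}(D')$, so the tensor product is division, as required.

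The consequence for a general central simple algebra $A$ over $K$ is then immediate: writing $A \sim M_n(D)$ with $D$ division, since $D \otimes_K K^{(\infty)}$ is again division one obtains $\mathrm{ind}(A \otimes_K K^{(\infty)}) = \mathrm{ind}(D \otimes_K K^{(\infty)}) = \mathrm{ind}(D) = \mathrm{ind}(A)$. The main obstacle in this plan is to check carefully that the cited form of Panin's theorem covers torsors under the Weil restriction $R_{L/K'}(SL_1(A))$, and not only $SL_1(A)$-torsors over $L$; the passage between the two is essentially a Shapiro-type identification on both $H^1$ and on function fields, but it is precisely the point at which one must match the setup against the precise statement of the reference.
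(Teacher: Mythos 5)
Your argument is correct and follows the same strategy as the paper's proof: reduce through the filtered colimit to a finite stage $K_{\mathcal U}$, and then conclude with Panin's theorem \cite[Corollary 5.2]{MR1618404}. The only real difference is structural: instead of climbing the tower $K\subset \mathrm{Frac}(X_1)\subset \mathrm{Frac}(X_1\times_K X_2)\subset\cdots$ and applying Panin once per factor, each time to a torsor under a single Weil restriction $R_{L/K'}(SL_1(A))$ over the previously constructed field (which requires the small check that $K_i\otimes_K K'$ remains a field -- true because $K'$ is the function field of a geometrically integral $K$-variety, so $K$ is algebraically closed in it), the paper applies Panin exactly once, to the function field of $X_1\times_K\cdots\times_K X_r$ viewed as a torsor under the product group $R_{K_1/K}(SL_1(A_1))\times_K\cdots\times_K R_{K_r/K}(SL_1(A_r))$. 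This also disposes of the obstacle you flag at the end: the hypothesis in Panin's corollary is that the group be semisimple and simply connected, and both a single Weil restriction $R_{L/K}(SL_1(A))$ and a product of such groups satisfy it, since over $\overline{K}$ they become products of groups $SL_{n_j}$ (Galois descent); no Shapiro-type identification between torsors over $L$ and over $K$ is needed. The deduction of $\mathrm{ind}(A)=\mathrm{ind}(A\otimes_K K^{(\infty)})$ from the division-algebra statement is the same in both arguments.
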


\begin{proof}
If $D\otimes_K K^{(\infty)}$ is not a division algebra then, for sufficiently large $n$, $D\otimes_K K^{(n)}$ is not a division algebra. Therefore, it suffices to show that $D\otimes_K K^{(1)}$ is a division algebra. Let us suppose, by contraction, that $D\otimes_K K^{(1)}$ is not a division algebra. This implies the existence of a finite subset $\mathcal U \subset \mathcal F$ such that $D\otimes_K K_{\mathcal U}$ is not a division algebra. We can further assume that $\mathcal U$ is not empty; otherwise, the contradiction is immediate. The field $K_{\mathcal U}$ is the quotient field of a principal homogeneous space under $R_{K_1/K}(SL_1(A_1))\times_K \cdots \times_K R_{K_r/K}(SL_1(A_r))$ for some finite extensions $K\subset K_i$ and some central simple algebras $A_i$ over $K_i$. Over the algebraic closure $\overline{K}$, this group is isomorphic to a product of $SL_{n_j}$, thus by Galois descent, it is semi-simple and simply connected. According to  Panin's theorem (see \cite[Corollary 5.2]{MR1618404}), the central simple algebra $D\otimes_K K_{\mathcal U}$ must be a division algebra. This contradiction shows that $D\otimes_K K^{(1)}$ is indeed a division algebra. Consequently, $\text{ind}(A)=\text{ind}(A\otimes_K K^{(\infty)})$ as required.
\end{proof}

The following proposition was already stated in \cite{MR1622783}.

\begin{prop}
\label{propcohomo2}
The field $K^{(\infty)}$ has cohomological dimension at most $2$. 
\end{prop}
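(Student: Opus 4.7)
The plan is to establish the stronger property that for every finite extension $F/K^{(\infty)}$ and every central simple algebra $A$ over $F$, the reduced norm $\mathrm{Nrd}: A^\times \to F^\times$ is surjective, and then to deduce the cohomological dimension bound from this property via the Merkurjev--Suslin/Bloch--Kato machinery.

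For the reduced norm surjectivity, I would fix $F/K^{(\infty)}$ finite, $A$ a central simple $F$-algebra, and $\lambda \in F^\times$. Because this is a finite collection of algebraic data, the colimit description $K^{(\infty)} = \bigcup_n K^{(n)}$ allows one to choose an integer $n$, a finite extension $L/K^{(n)}$ with $L \otimes_{K^{(n)}} K^{(\infty)} \simeq F$, a central simple algebra $A_0/L$ base changing to $A$, and $\lambda_0 \in L^\times$ mapping to $\lambda$. The class of $\lambda_0$ in $H^1(L, SL_1(A_0)) \simeq L^\times / \mathrm{Nrd}(A_0^\times)$ corresponds by Shapiro's lemma to a torsor $Y$ over $K^{(n)}$ under $R_{L/K^{(n)}}(SL_1(A_0))$. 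If $Y$ is non-trivial, it is birationally equivalent to an element of the family $\mathcal F$ used to construct $K^{(n+1)}$, so its function field $K^{(n)}(Y)$ embeds into $K^{(n+1)}$; since $Y$ has a tautological rational point over its own function field, it becomes split already over $K^{(n+1)}$. Unwinding Shapiro, $\lambda_0$ is a reduced norm in $L \otimes_{K^{(n)}} K^{(n+1)}$, and hence $\lambda$ is a reduced norm in $F$.

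To deduce $\mathrm{cd}(K^{(\infty)}) \leq 2$, I would proceed prime by prime. Fix a prime $p$; since $[K^{(\infty)}(\mu_p) : K^{(\infty)}]$ divides $p - 1$ and is therefore coprime to $p$, this extension preserves $\mathrm{cd}_p$, and I may assume $\mu_p \subset K^{(\infty)}$. Under this assumption, for any finite extension $F/K^{(\infty)}$, the norm residue theorem (Bloch--Kato, proven by Voevodsky and Rost) identifies $H^3(F, \mu_p^{\otimes 3})$ with $K_3^M(F)/p$, a group generated by symbols $\{a, b, c\}$; the projection formula shows that each such symbol vanishes modulo $p$ whenever $c$ is a reduced norm from the cyclic algebra $(a, b)_{p, \zeta}$, which is exactly what the first step guarantees. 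Hence $H^3(F, \mu_p^{\otimes 3}) = 0$ for all finite $F/K^{(\infty)}$, and since $\mu_p \simeq \mathbb Z/p$ as Galois modules under our assumption, this gives $\mathrm{cd}_p(K^{(\infty)}) \leq 2$.

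The main obstacle is the descent step: one must verify rigorously that an arbitrary finite extension $F/K^{(\infty)}$, together with an algebra and element defined over $F$, really descends as a base change from a finite extension $L/K^{(n)}$ of some intermediate stage. This ultimately rests on the principle that any finite piece of algebraic information (the minimal polynomial of a primitive element, finitely many structure constants of the algebra, and a single element $\lambda$) can be specified at a sufficiently large $K^{(n)}$, provided one enlarges $n$ until the relevant minimal polynomial becomes irreducible already over $K^{(n)}$; it also requires keeping track, through the Weil restriction and Shapiro's lemma, of how the birational class of the torsor over $K^{(n)}$ matches up with an element of the family $\mathcal F$ used to build $K^{(n+1)}$.
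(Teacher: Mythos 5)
Your proof is correct, and its core is the same as the paper's: you descend $F$, $A$ and $\lambda$ to a finite stage $K^{(n)}$, identify the class of $\lambda_0$ in $\Hh^1(L,SL_1(A_0))\simeq L^\times/\mathrm{Nrd}(A_0^\times)$ with an $R_{L/K^{(n)}}(SL_1(A_0))$-torsor over $K^{(n)}$ via Shapiro and Weil restriction, and split it over $K^{(n+1)}$ because its function field is by construction contained in $K^{(n+1)}$; the paper runs exactly this argument, phrased as the vanishing of $\Hh^1(L^{(\infty)},SL_1(A))$, and the descent step you worry about is not a genuine gap --- the paper handles it just as you suggest (only finitely many algebraic data are involved, and one enlarges $n$ until the primitive element keeps its degree, so that $[L:K^{(n)}]=[F:K^{(\infty)}]$). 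Where you genuinely differ is the bridge from reduced-norm surjectivity to $\mathrm{cd}\leq 2$: the paper simply cites Suslin's theorem (Gille--Szamuely, Theorem 4.7.1), while you re-derive the needed implication prime by prime from the degree-$3$ norm residue isomorphism theorem together with the projection formula. That deduction is valid: after adjoining $\mu_p$ (a prime-to-$p$ extension, harmless for $\mathrm{cd}_p$, and finite extensions of $K^{(\infty)}(\mu_p)$ are still finite over $K^{(\infty)}$, so the reduced-norm property persists), $\Hh^3(F,\mu_p^{\otimes 3})$ is generated by cup products $(a)\cup(b)\cup(c)$; when $(a,b)_\zeta$ is split the class already dies in degree $2$, and when it is division a reduced norm $c$ is either a $p$-th power or a norm from a degree-$p$ maximal subfield splitting the algebra, so corestriction and the projection formula kill the class --- a small case distinction you should make explicit, since an element of a split algebra need not generate a field. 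The trade-off between the two routes: quoting Suslin keeps the cohomological input at the level of the Merkurjev--Suslin theorem, whereas your argument is self-contained at the cost of invoking the much deeper Voevodsky--Rost theorem, but it has the merit of making the mechanism (symbols killed by reduced norms) completely transparent.
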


\begin{proof}
According to a theorem of Suslin (see \cite[Theorem 4.7.1]{MR3972198}), the field $K^{(\infty)}$ has cohomological dimension at most $2$ if and only if for all finite extensions $K^{(\infty)}\subset L^{(\infty)}$ and for all central simple algebras $A$ over $L^{(\infty)}$, the reduced norm $\text{Nrd}:A^*\rightarrow (L^{(\infty)})^*$ is surjective. In particular, proving that $\text{H}^1(L^{(\infty)}, SL_1(A))\simeq \text{H}^1(K^{(\infty)}, R_{L^{(\infty)}/K^{(\infty)}}(SL_1(A)))$ is trivial will establish our claim because then we will have the following exact sequence:
\begin{equation*}
        \begin{tikzpicture}[baseline= (a).base]
        \node[scale=1] (a) at (0,0){
        \begin{tikzcd}
       1 \arrow[r]& SL_1(A)\arrow[r] & A^*\arrow[r, "\text{Nrd}"] & (L^{(\infty)})^*\arrow[r] & 1. 
       \end{tikzcd}};
        \end{tikzpicture}
\end{equation*}
Let $Y\in \text{H}^1(L^{(\infty)}, SL_1(A))$. Select $\alpha \in L^{(\infty)}$ such that $L^{(\infty)}= K^{(\infty)}(\alpha)$. We can, without loss of generality, replace $K$ with $K^{(n)}$ for a sufficiently large $n$ to assume that $A$ and $Y$ are defined over $L\coloneqq K(\alpha)$ (i.e. $A=\Tilde{A}\otimes_L L^{(\infty)}$ and $Y=\Tilde{Y}\times_L L^{(\infty)}$) and that $[L^{(\infty)}:K^{(\infty)}]=[L:K]$. Suppose, by contraction, that $Y$ has no $L^{(\infty)}$-points. The Weil restriction $R_{L/K}\Tilde{Y}$ is a $R_{L/K}(SL(\Tilde{A}))$-torsor over $K$. Consequently, by construction, the quotient field $\Tilde{K}\coloneqq k(R_{L/K}\Tilde{Y})$ is contained in $K^{(1)}\subset K^{(\infty)}$. The isomorphism between $L\otimes_K \Tilde{K}$ and $\Tilde{L}\coloneqq \Tilde{K}(\alpha)$ induces an isomorphism between $(R_{L/K}\Tilde{Y})_{\Tilde{K}}$ and $R_{\Tilde{L}/\Tilde{K}}(\Tilde{Y}_{\Tilde{L}})$. By construction, $R_{L/K}\Tilde{Y}$ has a $\Tilde{K}$-point. Consequently, $R_{\Tilde{L}/\Tilde{K}}(\Tilde{Y}_{\Tilde{L}})$ also has a $\Tilde{K}$-point. This implies that $\Tilde{Y}$ has a $\Tilde{L}$-point, leading to a contradiction since $\Tilde{L}\subset L^{(\infty)}$.
\end{proof}

\begin{oss}
The cohomological dimension of $K^{(\infty)}$ could be strictly smaller than $2$. For example, if $K$ is algebraically closed, then $K=K^{(1)}=K^{(\infty)}$, so the cohomological dimension is $0$. We note that $K=K^{(1)}=K^{(\infty)}$ holds for all fields of cohomological dimension $\leq 2$, by Suslin's Theorem (see \cite[Theorem 4.7.1]{MR3972198}). 

However, if we assume that $\text{Br}(K)$ is not trivial, then Lemma \ref{lemmacohomo2} implies that $\text{Br}(K^{(\infty)})$ is not trivial. According to \cite[Theorem 6.1.8]{gille_szamuely_2006}, this implies that the cohomological dimension of $K^{(\infty)}$ is at least $2$ and, therefore, it is precisely $2$.
\end{oss}

\section{Construction of the homogeneous space when $K=k((t))$ for some local field $k$}
\label{localt}

In this section we prove Theorem \ref{thm2}. We are going to construct a homogeneous space over $K=k((t))$, for some local field $k$. Thanks to the observation made at the end of Section \ref{sezionefla}, it is sufficient to find a suitable connected, reductive, linear $K$-algebraic group $H$ as in Proposition \ref{commuta}, that is:
\begin{itemize}
    \item the maximal toric quotient $H^{\text{tor}}$ is flasque;
    \item the map on cohomology $\Hh^1(K, H^{\text{tor}})\rightarrow \Hh^2(K,Z^{\text{(ss)}})\rightarrow \Hh^3(K,\mu)$ is non-trivial, where the first map is the connecting morphism induced by:
    \begin{equation*}
        \begin{tikzpicture}[baseline= (a).base]
        \node[scale=1] (a) at (0,0){
        \begin{tikzcd}
      1 \arrow[r] & Z^{\text{(ss)}} \arrow[r] & Z \arrow[r] & H^{\text{tor}} \arrow[r] & 1.
       \end{tikzcd}};
        \end{tikzpicture}
    \end{equation*}
    and the second one is the connecting map induced by:
    \begin{equation*}
        \begin{tikzpicture}[baseline= (a).base]
        \node[scale=1] (a) at (0,0){
        \begin{tikzcd}
      1 \arrow[r] & \mu \arrow[r] & Z^{\text{(sc)}} \arrow[r] & Z^{\text{(ss)}} \arrow[r] & 1.
       \end{tikzcd}};
        \end{tikzpicture}
\end{equation*}
\end{itemize}

The section is organised as follows. First of all, we reduce the construction of $H$ to the existence of an extension with `nice' properties in a similar way of Section \ref{dim2}, without assuming any condition on the field $K$. In the following subsections, we are going to prove the existence of such extension.

\begin{prop}
\label{extensione}
Consider an extension of groups of multiplicative type over $K$:
\begin{equation*}
        \begin{tikzpicture}[baseline= (a).base]
        \node[scale=1] (a) at (0,0){
        \begin{tikzcd}
       1 \arrow[r] & \mu_p\arrow[r] & R \arrow[r]& F\arrow[r] & 1
       \end{tikzcd}};
        \end{tikzpicture}
\end{equation*}
that satisfies the following properties: 
\begin{itemize}
    \item $F$ is an algebraic torus,
    \item the composition of connecting morphisms $\Hh^1(K,F)\rightarrow \Hh^2(K,\mu_p)\rightarrow \Hh^3(K,\mu_p)$ is non-trivial, where the second map is induced in cohomology by the Bockstein exact sequence:
    \begin{equation*}
        \begin{tikzpicture}[baseline= (a).base]
        \node[scale=1] (a) at (0,0){
        \begin{tikzcd}
      1 \arrow[r] & \mu_{p} \arrow[r] & \mu_{p^2} \arrow[r, "(-)^{p}"] & \mu_{p} \arrow[r] & 1.
       \end{tikzcd}};
        \end{tikzpicture}
\end{equation*}
\end{itemize}
Then, there exists a connected, reductive, linear $K$-algebraic group $H$ such that:
\begin{itemize}
    \item the maximal toric quotient of $H$ is $F$,
    \item the map on cohomology $\Hh^1(K,F){\longrightarrow} \Hh^2(K,Z^{(\emph{ss})}){\longrightarrow} \Hh^3(K,\mu)$
is non-trivial.
\end{itemize}
\end{prop}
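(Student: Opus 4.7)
The plan is to mimic the construction of Proposition~\ref{main1}, but with $SL_p$ replaced by $SL_{p^2}$ and with the anti-diagonal embedding twisted by the Bockstein, so that the simply connected cover $H^{\text{sc}}$ acquires a non-trivial fundamental kernel $\mu = \mu_p$, and the four-term sequence encoding $(\delta_1)_* \circ (\delta_2)_*$ becomes precisely the Yoneda splice of the Bockstein with the given extension of $F$ by $\mu_p$.

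The first step would be to fix a morphism $\phi : \mu_{p^2} \to R$ as the composition of the Bockstein surjection $\mu_{p^2} \xrightarrow{(-)^p} \mu_p$ with the given inclusion $i_1 : \mu_p \hookrightarrow R$; in particular $\ker \phi = \mu_p \subset \mu_{p^2}$. Then I would define $H$ by the central extension
$$1 \to \mu_{p^2} \xrightarrow{i_3} SL_{p^2} \times_K R \xrightarrow{p_3} H \to 1,$$
where $i_3(a) = (a, \phi(a)^{-1})$ and $a$ is viewed as a central element of $SL_{p^2}$ via the identification of its centre with $\mu_{p^2}$. As in Proposition~\ref{main1}, the group $H$ is connected and reductive, and the same diagram chase yields $H^{\text{tor}} = F$.

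The second step is to identify the remaining structural data. Since $\ker \phi = \mu_p$, the canonical morphism $SL_{p^2} \to H$ has kernel $\mu_p$, so $H^{\text{ss}} = SL_{p^2}/\mu_p$, and therefore $H^{\text{sc}} = SL_{p^2}$ with $\mu = \mu_p$ and $Z^{\text{(ss)}} = \mu_{p^2}/\mu_p \simeq \mu_p$. A short computation with the anti-diagonal equivalence shows that $Z(H) = (\mu_{p^2} \times R)/\mu_{p^2}$ is isomorphic to $R$ via $r \mapsto [1,r]$, and under this identification the inclusion $Z^{\text{(ss)}} \hookrightarrow Z$ becomes the given $i_1 : \mu_p \hookrightarrow R$. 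Thus the extension $1 \to Z^{\text{(ss)}} \to Z \to H^{\text{tor}} \to 1$ is the given $1 \to \mu_p \to R \to F \to 1$, while $1 \to \mu \to Z^{\text{(sc)}} \to Z^{\text{(ss)}} \to 1$ is the Bockstein.

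Finally, invoking \cite[Proposition 3.2.5]{AST_1996__239__R1_0} exactly as in the proof of Proposition~\ref{commuta}, the composite $(\delta_1)_* \circ (\delta_2)_* : \Hh^1(K,F) \to \Hh^3(K,\mu_p)$ coincides, up to sign, with the connecting map of the four-term exact sequence
$$1 \to \mu_p \to \mu_{p^2} \xrightarrow{\phi} R \to F \to 1,$$
which is by construction the Yoneda splice of the Bockstein and the given extension. Hence this composite equals, up to sign, the composition $\Hh^1(K,F) \to \Hh^2(K,\mu_p) \to \Hh^3(K,\mu_p)$ that is non-trivial by hypothesis, and the conclusion follows. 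The main subtlety I expect is the bookkeeping for the identification $Z(H) \simeq R$ and for locating $Z^{\text{(ss)}}$ and $Z^{\text{(sc)}}$ inside the centre: unlike in Proposition~\ref{main1}, the non-injectivity of $\phi$ means the images and kernels inside the anti-diagonal quotient are genuinely twisted by the Bockstein and must be tracked carefully to ensure the two rows $1 \to Z^{\text{(ss)}} \to Z \to F \to 1$ and the Bockstein $1 \to \mu \to Z^{\text{(sc)}} \to Z^{\text{(ss)}} \to 1$ are identified correctly.
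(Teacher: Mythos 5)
Your proposal is correct and is essentially the paper's own argument: the paper defines $H$ as the quotient of $(SL_{p^2}/\mu_p)\times_K R$ by the anti-diagonally embedded $\mu_p$, which is exactly the group you construct as $(SL_{p^2}\times_K R)/\mu_{p^2}$ with the Bockstein-twisted embedding, and it makes the same identifications $H^{\text{sc}}=SL_{p^2}$, $H^{\text{ss}}=SL_{p^2}/\mu_p$, $\mu=\mu_p$, $Z\simeq R$, $Z^{(\text{sc})}=\mu_{p^2}$, $Z^{(\text{ss})}=\mu_p$, so that the two relevant short exact sequences are the given extension and the Bockstein sequence. The only difference is cosmetic: since the desired conclusion is literally the composite of the two connecting maps, the paper concludes directly from these identifications, so your final detour through the Yoneda splice of the four-term sequence is unnecessary (that compatibility is only needed later, in Proposition~\ref{commuta}).
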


\begin{proof}
We consider the following central exact sequence:
\begin{equation*}
        \begin{tikzpicture}[baseline= (a).base]
        \node[scale=1] (a) at (0,0){
        \begin{tikzcd}
       1 \arrow[r] & \mu_p\arrow[r] & SL_{p^2}/\mu_p \arrow[r]& PSL_{p^2}\arrow[r] & 1.
       \end{tikzcd}};
        \end{tikzpicture}
\end{equation*}
We define the linear $K$-algebraic group $H$ to fit into the following exact sequence:
\begin{equation*}
        \begin{tikzpicture}[baseline= (a).base]
        \node[scale=1] (a) at (0,0){
        \begin{tikzcd}
       1 \arrow[r] & \mu_p\arrow[r] & SL_{p^2}/\mu_p\times_K R \arrow[r]& H\arrow[r] & 1,
       \end{tikzcd}};
        \end{tikzpicture}
\end{equation*}
as in the proof of Proposition \ref{main1}. The group $H$ is connected, reductive and its maximal toric quotient is $F$. By construction $H^{(\text{ss})}$ is $SL_{p^2}/\mu_p$, $H^{(\text{sc})}$ is $SL_{p^2}$ and $\mu$ is $\mu_p$. The centers $Z^{(\text{sc})}$ and $Z^{(\text{ss})}$ are $\mu_{p^2}$ and $\mu_p$, respectively. The extension given by 
\begin{equation*}
        \begin{tikzpicture}[baseline= (a).base]
        \node[scale=1] (a) at (0,0){
        \begin{tikzcd}
       1 \arrow[r] & \mu\arrow[r] & Z^{(\text{sc})} \arrow[r]&Z^{(\text{ss})} \arrow[r] & 1
       \end{tikzcd}};
        \end{tikzpicture}
\end{equation*}
coincides with the Bockstein exact sequence. This leads to the conclusion.
\end{proof}

Furthermore, for the next example we would like that the algebraic torus $F$ satisfy the following properties:
\begin{itemize}
    \item $F$ is a flasque torus,
    \item the group $\Hh^1(K,F)$ is isomorphic to $\mathbb Z/ p \mathbb Z$.
\end{itemize}
This is sufficient to obtain a homogeneous space whose evaluation map is zero by Proposition \ref{commuta}.

\subsection{Duality theorems on higher local fields}

A prerequisite for constructing an extension, as described in Proposition \ref{extensione}, is the requirement that the Bockstein connecting map from $\Hh^2(K,\mu_p)$ to $\Hh^3(K,\mu_p)$ is non-trivial. Therefore, we will select a suitable $K$ in order to ensure that this condition is satisfied. Furthermore, the connecting map will be surjective and $\Hh^3(K,\mu_p)$ will be non-zero.

In this subsection, we assume that $K$ is a $2$-local field, that means there is a chain of fields $K_2=K$, $K_1$ and $K_0$ such that $K_{i+1}$ is a complete discrete valuation field with residue field $K_i$ and $K_0$ is finite, and the characteristic of $K_1$ is not $p$. First of all we recall some notations and facts about duality, for a more detailed discussion see \cite[p. 37-39]{milne2006}.

Firstly, we recall some notations. The $\Gamma_K$-module $\mu_{p^{\infty}}$ is the colimit $\varinjlim\mu_{p^n}$ of the $p^n$-th roots of unity (i.e. the union for of all $n$ of the $p^n$-th roots of unity). For a finite and $p$-primary $\Gamma_K$-module $\mathcal M$, we denote by $\mathcal M^{\otimes 2}$ the tensor product ${\mathcal M}\otimes_{\mathbb Z} {\mathcal M}$ with the diagonal action of $\Gamma_K$, and ${\mathcal M}^*$ denotes the group $\Hom_{\mathbb Z}({\mathcal M}, \mu_{p^{\infty}}^{\otimes 2})$ with the following $\Gamma_K$ action: $\sigma \cdot f\coloneqq \sigma \circ f \circ \sigma^{-1}$. By combining well-known results, we obtain the following theorem:

\begin{thm}
\label{kato}
We have the following properties:
\begin{itemize}
    \item There exists an isomorphism $\Hh^3(K, \mu_{p^n}^{\otimes 2})\simeq \mathbb Z/p^n \mathbb Z$.
    \item The cup-product pairing
    \begin{equation*}
        \begin{tikzpicture}[baseline= (a).base]
        \node[scale=1] (a) at (0,0){
        \begin{tikzcd}
       \Hh^r(K, {\mathcal M}) \times \Hh^{3-r}(K, {\mathcal M}^*) \arrow[r] & \Hh^3(K, \mu_{p^{\infty}}^{\otimes 2})\simeq \mathbb Q_p/\mathbb Z_p
       \end{tikzcd}};
        \end{tikzpicture}
\end{equation*}
    is a non-degenerate pairing of finite groups for all $r=0,1,2,3$.
\end{itemize}
\end{thm}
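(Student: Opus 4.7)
The plan is to deduce both statements from Kato's duality theorem for $2$-dimensional local fields, applied to our $K=K_2$ with $p$ invertible in the middle residue field $K_1$. Under this hypothesis the cohomological $p$-dimension of $K$ equals $3$, and the dualizing module at the top degree is the Tate twist $\mu_{p^\infty}^{\otimes 2}$, one twist coming from each of the two successive residue fields.

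For the first bullet, I would compute $\Hh^3(K,\mu_{p^n}^{\otimes 2})$ by iterating a residue isomorphism. For any complete discretely valued field $L$ with residue field $\kappa$ of characteristic coprime to $p$, the long exact sequence of tame cohomology (equivalently, Hochschild--Serre for the unramified quotient of $\Gamma_L$) provides, in top $p$-primary degree $i=\text{cd}_p(L)$, an isomorphism
\begin{equation*}
\Hh^{i}(L,\mu_{p^n}^{\otimes j}) \xrightarrow{\sim} \Hh^{i-1}(\kappa,\mu_{p^n}^{\otimes j-1}).
\end{equation*}
Applying this twice along the chain $K\supset K_1 \supset K_0$, and starting from the local class field theory identification $\Hh^2(K_0,\mu_{p^n})\simeq \mathbb Z/p^n\mathbb Z$ for the finite field $K_0$, yields the desired cyclic isomorphism. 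Passing to the colimit over $n$ then identifies $\Hh^3(K,\mu_{p^\infty}^{\otimes 2})$ with $\mathbb Q_p/\mathbb Z_p$.

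For the second bullet, I would invoke Kato's local duality theorem for higher local fields, in the form presented in Milne's \emph{Arithmetic Duality Theorems} (cf.\ Theorem I.2.17 and its extension to the higher-dimensional setting) or in Kato's original paper. Once the top-degree cohomology is identified by the first bullet, Kato's theorem asserts that for any finite $p$-primary $\Gamma_K$-module $\mathcal M$ the cup-product pairs $\Hh^r(K,\mathcal M)$ perfectly with $\Hh^{3-r}(K,\mathcal M^*)$, both groups being finite. The main obstacle is purely bibliographic rather than mathematical: one has to locate a formulation of Kato's theorem with the precise dualizing module $\mu_{p^\infty}^{\otimes 2}$ (and with the non-standard $\Gamma_K$-action on $\mathcal M^*$ used here, namely $\sigma\cdot f = \sigma\circ f\circ \sigma^{-1}$). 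Once this is matched with a reference, both assertions follow directly.
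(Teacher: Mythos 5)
Your handling of the second bullet coincides with the paper, which simply cites Kato's duality theorem in the form of Milne's \emph{Arithmetic Duality Theorems}, Theorem I.2.17 (the paper likewise cites Kato's Theorem III for the first bullet rather than reproving it). For the first bullet you attempt a self-contained residue computation, which is a reasonable alternative route, but as written it contains a genuine error at the anchor: for the finite field $K_0$ one has $\mathrm{cd}(K_0)=1$ and $\mathrm{Br}(K_0)=0$, so $\Hh^2(K_0,\mu_{p^n})=0$; there is no ``local class field theory identification $\Hh^2(K_0,\mu_{p^n})\simeq \mathbb Z/p^n\mathbb Z$''. Your own bookkeeping shows that after applying the residue isomorphism twice the degree drops $3\to 2\to 1$ and the twist drops $2\to 1\to 0$, so the correct endpoint is $\Hh^1(K_0,\mathbb Z/p^n\mathbb Z)\simeq \mathbb Z/p^n\mathbb Z$ (evaluation at Frobenius), not an $\Hh^2$ of the finite field. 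Alternatively, stop after one residue step and use $\Hh^2(K_1,\mu_{p^n})=\mathrm{Br}(K_1)[p^n]\simeq \mathbb Z/p^n\mathbb Z$, which is where local class field theory actually enters, since $K_1$ is the local field.

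A second, related caveat: your second residue step from $K_1$ to $K_0$ requires $\mathrm{char}(K_0)\neq p$, but the standing hypothesis of this subsection is only that $\mathrm{char}(K_1)\neq p$; the field $K_1$ may be $p$-adic, in which case $\mathrm{char}(K_0)=p$ and $\mu_{p^n}$ is not tamely ramified data over $K_1$, so the second step fails. (In the paper's eventual application $\mathrm{char}(K_0)=l\neq p$, but the theorem is stated in the more general setting.) The robust versions of your argument are therefore: one residue step plus $\mathrm{Br}(K_1)[p^n]\simeq \mathbb Z/p^n\mathbb Z$, or simply the citation of Kato's Theorem III as in the paper. With that correction, passing to the colimit to get $\Hh^3(K,\mu_{p^\infty}^{\otimes 2})\simeq \mathbb Q_p/\mathbb Z_p$ and invoking Kato--Milne duality for the pairing is exactly the paper's argument; also note that the action $\sigma\cdot f=\sigma\circ f\circ\sigma^{-1}$ on $\mathcal M^*$ is just the usual contragredient action, so no special matching of conventions is needed.
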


\begin{proof}
The first item is \cite[Theorem III]{10.3792/pjaa.54.250}, and the second one is \cite[Theorem 2.17]{milne2006}.
\end{proof}

As a direct consequence, we can now proceed to prove the following lemma:

\begin{lemma}
\label{pulizia}
Let $K$ be a $2$-local field. Assume that $K$ contains a primitive $p$-th root of unity and it does not contain a  primitive $p^2$-th root of unity. Then, the connecting map $\delta: \Hh^2(K, \mu_p)\rightarrow \Hh^3(K,\mu_p)$, induced by the Bockstein extension
\begin{equation*}
        \begin{tikzpicture}[baseline= (a).base]
        \node[scale=1] (a) at (0,0){
        \begin{tikzcd}
      1 \arrow[r] & \mu_{p} \arrow[r, "i"] & \mu_{p^2} \arrow[r, "(-)^{p}"] & \mu_{p} \arrow[r] & 1,
       \end{tikzcd}};
        \end{tikzpicture}
\end{equation*}
is non-zero.
\end{lemma}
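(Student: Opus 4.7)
The strategy is to reduce the non-vanishing of $\delta$ in degree $2$ to a short calculation in degree $0$, and then invoke Kato duality to promote the resulting Bockstein class to a witness for $\delta \neq 0$.

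First, I would inspect the long exact sequence of Galois cohomology associated to the Bockstein extension in low degree:
\begin{equation*}
\Hh^0(K, \mu_{p^2}) \xrightarrow{(-)^p} \Hh^0(K, \mu_p) \xrightarrow{\delta^0} \Hh^1(K, \mu_p).
\end{equation*}
By hypothesis $\mu_{p^2}(K) = \mu_p(K)$, so the $p$-th power map on $\mu_{p^2}(K)$ is identically trivial. Hence $\delta^0$ is injective, and fixing a primitive $p$-th root of unity $\zeta \in K$, the class $\beta := \delta^0(\zeta) \in \Hh^1(K, \mu_p)$ is non-zero.

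Next, I would recognize $\delta$ in higher degrees as cup product by $\beta$. The choice of $\zeta$ provides $\Gamma_K$-equivariant identifications $\mu_p \simeq \mathbb{Z}/p\mathbb{Z}$ and $\mu_p^{\otimes n} \simeq \mu_p$ for every $n$. The extension class of the Bockstein sequence lives in $\text{Ext}^1_{\Gamma_K}(\mu_p, \mu_p) \cong \Hh^1(K, \Hom(\mu_p, \mu_p)) = \Hh^1(K, \mathbb{Z}/p\mathbb{Z})$, and under our identification corresponds to $\beta$. By the standard formula relating a connecting homomorphism to cup product by the extension class, one has
\begin{equation*}
\delta(\alpha) = \beta \cup \alpha \qquad \text{for all } \alpha \in \Hh^2(K, \mu_p),
\end{equation*}
where the cup product is formed using the evaluation pairing $\Hom(\mu_p, \mu_p)\otimes \mu_p \to \mu_p$.

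Finally, I would apply Theorem \ref{kato}. Since $K$ contains $\mu_p$, the dual module $(\mu_p)^* = \Hom(\mu_p, \mu_{p^\infty}^{\otimes 2})$ is canonically $\mu_p^{\otimes 2}$, and our choice of $\zeta$ further identifies this with $\mu_p$. The resulting cup product pairing
\begin{equation*}
\Hh^1(K, \mu_p) \times \Hh^2(K, \mu_p) \longrightarrow \Hh^3(K, \mu_p^{\otimes 2}) \cong \mathbb{Z}/p\mathbb{Z}
\end{equation*}
is non-degenerate by Kato duality. Since $\beta \neq 0$, some $\alpha \in \Hh^2(K, \mu_p)$ satisfies $\beta \cup \alpha \neq 0$, equivalently $\delta(\alpha) \neq 0$, proving the lemma. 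The only mild subtlety is the bookkeeping of Tate twists $\mu_p^{\otimes n}$; these become isomorphic once $\mu_p \subset K$, and a single choice of $\zeta$ rigidifies all the relevant identifications, so no genuine obstacle remains.
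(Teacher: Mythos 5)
Your first and third ingredients (the degree-zero computation and Kato duality) are exactly the ones the paper uses, but your second step contains a genuine error. The rule ``connecting homomorphism $=$ cup product with the extension class'' applies only when the extension splits as an extension of abelian groups, so that its class lies in $\Hh^1(K,\Hom(\mu_p,\mu_p))$. The Bockstein sequence does not split over $\mathbb Z$ (its middle term is $\mathbb Z/p^2\mathbb Z$ as a group), so its class in $\mathrm{Ext}^1_{\Gamma_K}(\mu_p,\mu_p)$ has non-trivial image in $\Hh^0\bigl(K,\mathrm{Ext}^1_{\mathbb Z}(\mu_p,\mu_p)\bigr)$ and is \emph{not} an element of $\Hh^1(K,\mathbb Z/p\mathbb Z)$; the asserted isomorphism $\mathrm{Ext}^1_{\Gamma_K}(\mu_p,\mu_p)\cong\Hh^1(K,\Hom(\mu_p,\mu_p))$ fails. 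Writing the Galois action on $\mu_{p^2}$ as multiplication by $1+pc(\sigma)$ after trivializing via $\zeta$ and a chosen $\xi$ with $\xi^p=\zeta$, a cocycle computation gives $\delta=\delta_{\mathrm{triv}}+\beta\cup(-)$, where $\delta_{\mathrm{triv}}$ is the Bockstein of the constant sequence $0\to\mathbb Z/p\mathbb Z\to\mathbb Z/p^2\mathbb Z\to\mathbb Z/p\mathbb Z\to 0$ and $\beta=\delta^0(\zeta)$ is the Kummer class of $\zeta$. Your formula already fails in degree one: by Kummer theory the map $\Hh^1(K,\mu_{p^2})=K^*/K^{*p^2}\to K^*/K^{*p}=\Hh^1(K,\mu_p)$ is surjective, so $\delta$ vanishes identically on $\Hh^1$, whereas $\beta\cup(-)$ does not (for $K=\mathbb Q_p(\zeta_p)$ the symbol $\{\zeta_p,a\}$ is non-trivial whenever $a$ is not a norm from $K(\zeta_{p^2})$).

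This gap is not cosmetic: in the situation of the lemma one checks (again by duality) that $\delta_{\mathrm{triv}}$ is also a non-zero map $\Hh^2(K,\mu_p)\to\Hh^3(K,\mu_p)\simeq\mathbb Z/p\mathbb Z$, so knowing $\beta\neq 0$ and non-degeneracy of the $\Hh^1\times\Hh^2$ pairing cannot exclude a cancellation between the two terms of $\delta$. The correct way to use your degree-zero observation is the paper's: the Bockstein sequence is its own dual under $\Hom(-,\mu_{p^\infty}^{\otimes 2})$, and connecting maps of a sequence and of its dual are adjoint for the Kato pairings; hence $\langle\delta x,\zeta\rangle=\pm\langle x,\delta^0(\zeta)\rangle=\pm\langle x,\beta\rangle$, and non-degeneracy of $\Hh^2(K,\mu_p)\times\Hh^1(K,\mu_p)\to\mathbb Q_p/\mathbb Z_p$ together with $\beta\neq 0$ yields $x$ with $\delta x\neq 0$. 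Equivalently, as in the paper: $(-)^p$ is zero on $\mu_{p^2}(K)$, so its adjoint $i_*:\Hh^3(K,\mu_p)\to\Hh^3(K,\mu_{p^2})$ is zero, and by exactness $\delta$ surjects onto $\Hh^3(K,\mu_p)\simeq\mathbb Z/p\mathbb Z\neq 0$.
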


\begin{proof}
The functor $\text{Hom}_{\mathbb Z}(-, \mu_{p^{\infty}}^{\otimes 2})$ defines an anti-equivalence of category from the category of finite $p$-primary $\Gamma_K$-modules to itself, that preserves exact sequences. In fact, this functor operates as $\text{Hom}_{\mathbb Z}(-, \mathbb Q_p/\mathbb Z_p)$ over the group structure. The dual sequence of the extension in the statement is itself. Consequently, we have the following commutative diagram:
\begin{equation*}
        \begin{tikzpicture}[baseline= (a).base]
        \node[scale=1] (a) at (0,0){
        \begin{tikzcd}
       0\ \ \ \ \ \ \ 0 \arrow[d,shift right=3.3ex] \\ 
      \mu_p \times \mu_p \arrow[r]\arrow[d,shift right=3.3ex, "i"]\arrow[u, shift left=-3.3ex] & \mu_{p^{\infty}}^{\otimes 2}\arrow[d, "\id", "\wr"']\\
      \mu_{p^2}\times \mu_{p^2} \arrow[r]\arrow[d,shift right=3.3ex, "(-)^p"]\arrow[u, shift left=-3.3ex, "(-)^p"'] & \mu_{p^{\infty}}^{\otimes 2}\arrow[d, "\id", "\wr"']\\
       \mu_p \times  \mu_p \arrow[r]\arrow[d,shift right=3.3ex]\arrow[u, shift left=-3.3ex, "i"'] & \mu_{p^{\infty}}^{\otimes 2}\\
      0\ \ \ \ \ \ \ 0. \arrow[u, shift left=-3.3ex]
       \end{tikzcd}};
        \end{tikzpicture}
\end{equation*}
There is the following commutative diagram, up to a sign:
\begin{equation*}
        \begin{tikzpicture}[baseline= (a).base]
        \node[scale=1] (a) at (0,0){
        \begin{tikzcd}
       \Hh^2(K, \mu_p) \times \Hh^1(K,  \mu_p) \arrow[d,shift right=6.50ex, "\delta"]\arrow[r] & \Hh^3(K, \mu_{p^{\infty}}^{\otimes 2})\simeq \mathbb Q_p / \mathbb Z_p\arrow[d, "\id", "\wr"']\\ 
      \Hh^3(K,  \mu_p) \times \Hh^0(K,  \mu_p) \arrow[r]\arrow[d,shift right=6.50ex, "i_*"]\arrow[u, shift left=-7.75ex] & \Hh^3(K, \mu_{p^{\infty}}^{\otimes 2})\simeq \mathbb Q_p / \mathbb Z_p\arrow[d, "\id", "\wr"']\\
      \Hh^3(K, \mu_{p^2}) \times \Hh^0(K, \mu_{p^2}) \arrow[r]\arrow[u, shift left=-7.75ex, "(-)^p_*"'] & \Hh^3(K, \mu_{p^{\infty}}^{\otimes 2})\simeq \mathbb Q_p / \mathbb Z_p,
       \end{tikzcd}};
        \end{tikzpicture}
\end{equation*}
where the top square commutes because of \cite[Proposition 3.4.9]{gille_szamuely_2006}, while the second one commutes because of the functoriality of the construction of the cup-product. By Theorem \ref{kato}, the cup-product pairing is non-degenerate of finite $p$-groups. Furthermore, the pairing is perfect in view of the isomorphism $\Hh^3(K,\mu_{p^{\infty}}^{\otimes 2})\simeq \mathbb Q_p / \mathbb Z_p$. Since $K$ does not contain a primitive $p^2$-th root of unity the map $(-)^p:\mu_{p^2}(K)\rightarrow \mu_p(K)$ is zero. Since the pairing is perfect, the map $i_*:\Hh^3(K,\mu_p)\rightarrow \Hh^3(K,\mu_{p^2})$ is zero as well, and therefore, the map $\delta$ is surjective. Since $\mu_p\subset K$ there is a chain of isomorphism $\Hh^3(K,\mu_p)=\Hh^3(K,\mu_p^{\otimes2})\simeq \mathbb Z/ p \mathbb Z$, so we can conclude that $\delta$ is non-zero.
\end{proof}

\subsection{Construction of the flasque torus}

In this subsection we construct the flasque torus $F$ and establish that, for certain specific fields, the firt cohomology group is isomorphic to $\mathbb Z/p \mathbb Z$. We begin with a lemma.

\begin{lemma}
\label{esatta}
Let $\mathcal F$ be the family of maximal proper subgroups of a non-cyclic abelian group $\Gamma$. Let $Q$ be a coflasque $\Gamma$-module, and let $\psi:\mathbb Z \rightarrow Q$ be a morphism of $\Gamma$-modules. Consider the following exact sequence of $\Gamma$-modules:
\begin{equation*}
        \begin{tikzpicture}[baseline= (a).base]
        \node[scale=1] (a) at (0,0){
        \begin{tikzcd}
      0 \arrow[r]& \mathbb Z \arrow[r, "(\Delta{,}  \psi )"] \arrow[r]& \bigoplus_{\Lambda \in \mathcal F} \mathbb Z[\Gamma/\Lambda] \oplus Q \arrow[r] & N \arrow[r] & 0
       \end{tikzcd}};
        \end{tikzpicture}
\end{equation*}
where $\Delta$ is the diagonal embedding. Then $N$ is coflasque. 
\end{lemma}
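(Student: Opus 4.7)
The plan is to apply the long exact sequence in cohomology to the given short exact sequence. Write $M := \bigoplus_{\Lambda \in \mathcal F} \mathbb Z[\Gamma/\Lambda] \oplus Q$ for the middle term. Each permutation module $\mathbb Z[\Gamma/\Lambda]$ is coflasque by Shapiro's Lemma (as already observed in the preliminaries), and coflasqueness is stable under finite direct sums, so $M$ is coflasque. Thus for every subgroup $\Lambda' \leq \Gamma$, the vanishing $\Hh^1(\Lambda', M) = 0$ reduces the claim $\Hh^1(\Lambda', N) = 0$ to proving injectivity of the connecting map $\alpha_{\Lambda'} : \Hh^2(\Lambda', \mathbb Z) \to \Hh^2(\Lambda', M)$. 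Since enlarging the codomain only strengthens injectivity, it suffices to verify injectivity after discarding the $Q$-summand; in particular the auxiliary morphism $\psi$ should play no role in the argument.

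I would then split into two cases. For $\Lambda' \lneq \Gamma$, I pick a maximal proper subgroup $\Lambda_0 \in \mathcal F$ with $\Lambda' \subseteq \Lambda_0$; since $\Lambda'$ fixes the trivial coset $\Lambda_0 \in \Gamma/\Lambda_0$, the projection $\mathbb Z[\Gamma/\Lambda_0] \to \mathbb Z$ onto the $\Lambda_0$-coefficient is $\Lambda'$-equivariant and retracts the diagonal embedding. Passing to cohomology produces a retraction of the $\Lambda_0$-component of $\alpha_{\Lambda'}$, so injectivity is automatic here.

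The remaining, more substantive, case is $\Lambda' = \Gamma$, where the non-cyclicity hypothesis enters. By Shapiro's Lemma, $\Hh^2(\Gamma, \mathbb Z[\Gamma/\Lambda]) \cong \Hh^2(\Lambda, \mathbb Z)$, and under this identification the corresponding component of $\alpha_\Gamma$ is the restriction map. Via the canonical isomorphism $\Hh^2(\Gamma, \mathbb Z) \cong \Hom(\Gamma, \mathbb Q/\mathbb Z)$ coming from $0 \to \mathbb Z \to \mathbb Q \to \mathbb Q/\mathbb Z \to 0$ (using that $\Gamma$ is finite, so $\Hh^n(\Gamma, \mathbb Q) = 0$ for $n \geq 1$), a class in $\ker \alpha_\Gamma$ corresponds to a character $\chi$ of $\Gamma$ whose kernel contains every $\Lambda \in \mathcal F$, hence the subgroup generated by $\bigcup_{\Lambda \in \mathcal F} \Lambda$. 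The key elementary fact (essentially the only non-routine ingredient) is that in a non-cyclic finite abelian group the maximal subgroups generate the whole group: two distinct maximal subgroups always generate by maximality, and a non-cyclic finite abelian $\Gamma$ has at least two maximal subgroups, since $\Gamma/p\Gamma$ has $\mathbb F_p$-dimension at least $2$ for some prime $p$, producing at least $p+1 \geq 3$ subgroups of index $p$. Hence $\chi = 0$.

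Combining the two cases gives $\Hh^1(\Lambda', N) = 0$ for every subgroup $\Lambda' \leq \Gamma$, i.e., $N$ is coflasque. The main obstacle is the last case, which reduces to the elementary group-theoretic observation about maximal subgroups of non-cyclic finite abelian groups; everything else is standard long-exact-sequence bookkeeping together with Shapiro's Lemma.
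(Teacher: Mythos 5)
Your proof is correct and follows essentially the same route as the paper: the long exact sequence together with coflasqueness of the middle term reduces everything to injectivity of $\Hh^2(\Lambda',\mathbb Z)\rightarrow\bigoplus_{\Lambda\in\mathcal F}\Hh^2(\Lambda',\mathbb Z[\Gamma/\Lambda])$, proved by a retraction when $\Lambda'$ is proper and by the character argument via Shapiro when $\Lambda'=\Gamma$ (the paper uses that the maximal subgroups cover $\Gamma$, you that they generate it --- both suffice). The only point the paper adds that you omit is the one-line observation that the sequence splits over $\mathbb Z$, so $N$ is torsion-free and hence genuinely a $\Gamma$-lattice, as required by the definition of coflasque.
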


\begin{proof}
First of all, we note that $N$ is torsion-free since the sequence splits as abelian groups. For all $\Tilde{\Gamma} < \Gamma$, there is the following long exact sequence:
\begin{equation*}
        \begin{tikzpicture}[baseline= (a).base]
        \node[scale=1] (a) at (0,0){
        \begin{tikzcd}[column sep=large]
     \Hh^1(\Tilde{\Gamma},\bigoplus_{\Lambda \in \mathcal F} \mathbb Z[\Gamma/\Lambda] \oplus Q ) \arrow[r] & \Hh^1(\Tilde{\Gamma}, N) \arrow[r]& \Hh^2(\Tilde{\Gamma}, \mathbb Z) \arrow[r,  "(\Delta_*{,} \psi_*)"]& \bigoplus_{\Lambda \in \mathcal F} \Hh^2(\Tilde{\Gamma}, \mathbb Z[\Gamma/\Lambda] \oplus Q).
       \end{tikzcd}};
        \end{tikzpicture}
\end{equation*}
The $\Tilde{\Gamma}$-module $\bigoplus_{\Lambda \in \mathcal F} \mathbb Z[\Gamma/\Lambda] \oplus Q$ is coflasque, thus, to prove the claim, it is sufficient to demonstrate that the map
\begin{equation*}
        \begin{tikzpicture}[baseline= (a).base]
        \node[scale=1] (a) at (0,0){
        \begin{tikzcd}[column sep=large]
     \Hh^2(\Tilde{\Gamma}, \mathbb Z) \arrow[r,  "\Delta_*"]& \bigoplus_{\Lambda \in \mathcal F} \Hh^2(\Tilde{\Gamma}, \mathbb Z[\Gamma/\Lambda]) 
       \end{tikzcd}};
        \end{tikzpicture}
\end{equation*}
    is injective. 
    \begin{itemize}
        \item If $\Tilde{\Gamma}$ is a proper subgroup of $\Gamma$, there exists a maximal proper subgroup $\Lambda$ of $\Gamma$ that contains $\Tilde{\Gamma}$. Since the subgroup $\Tilde{\Gamma}$ acts trivially on $\mathbb Z[\Gamma/\Lambda]$, the $\Gamma$-module $\mathbb Z[\Gamma/\Lambda]$ is isomorphic to $\mathbb Z^{[\Gamma:\Lambda]}$ as a $\Tilde{\Gamma}$-module, and the induced map on cohomology
        $$\Hh^2(\Tilde{\Gamma}, \mathbb Z)\overset{\Delta_*}{\longrightarrow} \Hh^2(\Tilde{\Gamma}, \mathbb Z)^{[\Gamma:\Lambda]}$$
        is injective.
        \item In the case of $\Gamma$ itself, we can rewrite 
        $$\Hh^2(\Gamma, \mathbb Z) \overset{\Delta_*}{\longrightarrow} \bigoplus_{\Lambda \in \mathcal F} \Hh^2(\Gamma, \mathbb Z[\Gamma/\Lambda]) $$
        as
        $$\Hom(\Gamma, \mathbb Q / \mathbb Z)= \Hh^1(\Gamma, \mathbb Q / \mathbb Z) \overset{\text{Res}}{\longrightarrow} \bigoplus_{\Lambda \in \mathcal F} \Hh^1(\Lambda, \mathbb Q /\mathbb Z) = \bigoplus_{\Lambda \in \mathcal F}\text{Hom}(\Lambda, \mathbb Q / \mathbb Z).$$
        An element $\chi$ belongs to the kernel of this map if and only if $\chi_{|\Lambda}$ is trivial for all $\Lambda\in \mathcal F$. Since $\Gamma$ is not cyclic, the union of subgroups in $\mathcal F$ covers the entire group $\Gamma$. In particular, $\text{Res}$ must be injective.
    \end{itemize}
\end{proof}

\begin{oss}
Previous lemma holds even in the case where $\Gamma$ is non-cyclic nilpotent, since all the elements of $\mathcal F$ are normal subgroups (see \cite[Proposition 5.2.4]{robinson1996course}). 
\end{oss}

Our main application of previous lemma is the following proposition:

\begin{prop}
\label{piatto}
Suppose that $\Gamma=(\mathbb Z/ p \mathbb Z)^2$. Let $K\subset K'$ be a Galois extension with group $\Gamma$. Let $\{K_i\}_{i=1}^{p+1}$ be the family of degree $p$ subextensions of $K\subset K'$. Consider the following exact sequence:
\begin{equation*}
        \begin{tikzpicture}[baseline= (a).base]
        \node[scale=1] (a) at (0,0){
        \begin{tikzcd}
      1 \arrow[r] & \mu_p \arrow[r] & \prod_{i=1}^{p+1} R_{K_i/K}(\mathbb G_{m,K_i}) \arrow[r] & F \arrow[r] & 1
       \end{tikzcd}};
        \end{tikzpicture}
\end{equation*}
where the first map is the diagonal embedding. Then $F$ is a flasque torus.
\end{prop}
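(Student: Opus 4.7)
The plan is to pass to character modules, identify the dual lattice as an appropriate cokernel, and apply Lemma~\ref{esatta}. Write $M = \bigoplus_{i=1}^{p+1}\mathbb{Z}[\Gamma/\Lambda_i]$, where $\Lambda_i = \gal(K'/K_i)$ is the order-$p$ subgroup of $\Gamma$ corresponding to $K_i$; the $\Lambda_i$ are precisely the maximal proper subgroups of the non-cyclic group $\Gamma = (\mathbb{Z}/p\mathbb{Z})^2$. Since the anti-equivalence between groups of multiplicative type and finitely generated $\Gamma$-modules is exact, dualizing the given sequence yields
\begin{equation*}
0 \to \widehat{F} \to M \xrightarrow{\pi} \mathbb{Z}/p\mathbb{Z} \to 0,
\end{equation*}
where $\pi$ is the sum of the augmentation maps followed by reduction modulo $p$. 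To show that $F$ is a flasque torus is equivalent to showing that $\widehat{F}$ is a flasque $\Gamma$-lattice, which in turn amounts to verifying that the dual $\widehat{F}^0 = \Hom(\widehat{F}, \mathbb{Z})$ is a coflasque $\Gamma$-lattice.

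The main step is an explicit description of $\widehat{F}^0$. Every $\mathbb{Z}$-linear functional $f \colon \widehat{F} \to \mathbb{Z}$ extends uniquely to a $\mathbb{Q}$-linear functional $\tilde{f}$ on $M \otimes \mathbb{Q}$, which I write as $\tilde{f} = \sum d_{i,\gamma}\, e_{i,\gamma}^{*}$ in the dual basis. Testing $\tilde{f}$ on the generators $e_{i,\gamma}-e_{j,\gamma'} \in \widehat{F}$ and on $p\, e_{i,\gamma} \in \widehat{F}$ shows that $f$ lands in $\mathbb{Z}$ if and only if every $d_{i,\gamma}$ lies in $\tfrac{1}{p}\mathbb{Z}$ and all of $p\, d_{i,\gamma}$ share the same residue modulo $p$. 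Setting $w = \tfrac{1}{p}\sum_{i,\gamma} e_{i,\gamma}^{*}$, and using the self-duality of permutation lattices (so that $\sigma \cdot e_{i,\gamma}^{*} = e_{i,\sigma\gamma}^{*}$), one sees that $w$ is $\Gamma$-invariant, and the computation above gives
\begin{equation*}
\widehat{F}^0 \;=\; M + \mathbb{Z}\cdot w, \qquad p\cdot w \;=\; \sum_{i=1}^{p+1} N_{\Lambda_i},
\end{equation*}
where $N_{\Lambda_i} = \sum_{\gamma \in \Gamma/\Lambda_i}\gamma$ is the norm element. Equivalently, via $(m,n) \mapsto m + nw$, one obtains the presentation
\begin{equation*}
\widehat{F}^0 \;\cong\; \bigl(M \oplus \mathbb{Z}\bigr) \big/ \, \mathbb{Z}\cdot\bigl(\Delta(1), -p\bigr),
\end{equation*}
where $\Delta \colon \mathbb{Z} \to M$ sends $1$ to $(N_{\Lambda_1}, \ldots, N_{\Lambda_{p+1}})$, i.e.\ to the diagonal embedding in the sense of Lemma~\ref{esatta}.

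Finally, Lemma~\ref{esatta} applies with $Q = \mathbb{Z}$ endowed with the trivial action (which is coflasque since $\Hh^1(\Lambda,\mathbb{Z}) = \Hom(\Lambda,\mathbb{Z}) = 0$ for every finite subgroup $\Lambda$) and $\psi(1) = -p$; it follows that $\widehat{F}^0$ is coflasque. Hence $\widehat{F}$ is flasque and $F$ is a flasque torus. The main obstacle is the explicit identification of $\widehat{F}^0$ as the cokernel above, especially the verification that the extension class lifts to the diagonal element $\sum_i N_{\Lambda_i}$; once this is in hand the conclusion is a direct invocation of Lemma~\ref{esatta}.
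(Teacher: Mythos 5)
Your proof is correct and follows essentially the same route as the paper: both dualize to realize $\widehat F$ as the kernel of the mod-$p$ total augmentation on $\bigoplus_i \mathbb Z[\Gamma/\Lambda_i]$, present $\widehat F^0$ as the cokernel of $(\Delta,\pm p)\colon\mathbb Z\to\bigoplus_i\mathbb Z[\Gamma/\Lambda_i]\oplus\mathbb Z$, and conclude by Lemma~\ref{esatta} with $Q=\mathbb Z$. The only (immaterial) difference is how that presentation is obtained: you compute $\widehat F^0=M+\mathbb Z\cdot w$ explicitly inside $M\otimes\mathbb Q$, whereas the paper gets the same exact sequence via a cartesian square and then dualizes.
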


\begin{proof}
The character module of $R_{K_i/K}(\mathbb G_{m,K_i})$ is $\mathbb Z[\Gamma_K/\Gamma_{K_i}]\simeq \mathbb Z[\Gamma/\Lambda_i]$, where $\Lambda_i$ is the subgroup of $\Gamma$ that fixes $K_i$. The group $F$ is the quotient of a $K$-algebraic torus, so it is a $K$-algebraic torus as well. There is a dual exact sequence of $\Gamma_K$-modules (see \cite[Theorem 7.3]{Affinegroup}):
 \begin{equation*}
        \begin{tikzpicture}[baseline= (a).base]
        \node[scale=1] (a) at (0,0){
        \begin{tikzcd}
      0 \arrow[r] & \widehat F \arrow[r] &\bigoplus_{i=1}^{p+1} \mathbb Z[\Gamma_K/\Gamma_{K_i}] \arrow[r, "\epsilon"]& \mathbb Z/p\mathbb Z \arrow[r] & 0,
       \end{tikzcd}};
        \end{tikzpicture}
\end{equation*}
where $\epsilon(\gamma \Gamma_{K_i})=1$ for all $\gamma\in \Gamma_K$ and $i=1,\cdots, p+1$. The action of $\Gamma_K$ on the elements of the exact sequence factors trough the action of $\Gamma$, so we can consider it as a sequence of $\Gamma$-modules. The following diagram is cartesian 
\begin{equation*}
        \begin{tikzpicture}[baseline= (a).base]
        \node[scale=1] (a) at (0,0){
        \begin{tikzcd}
      \bigoplus_{i=1}^{p+1} \mathbb Z[\Gamma/\Lambda_i] \arrow[r, "\epsilon"]& \mathbb Z/p\mathbb Z \\
      \bigoplus_{i=1}^{p+1} \mathbb Z[\Gamma/\Lambda_i] \oplus \mathbb Z \arrow[r, "(\epsilon{,} p\cdot)"]\arrow[u, "\pi_1"]& \mathbb Z  \arrow[u, "\pi_2"]
       \end{tikzcd}};
        \end{tikzpicture}
\end{equation*}
where $p\cdot$ is the multiplication by $p$. Thus, we have the following exact sequence of $\Gamma$-modules:
\begin{equation*}
        \begin{tikzpicture}[baseline= (a).base]
        \node[scale=1] (a) at (0,0){
        \begin{tikzcd}
      0 \arrow[r]& \widehat F \arrow[r]& \bigoplus_{i=1}^{p+1} \mathbb Z[\Gamma/\Lambda_i] \oplus \mathbb Z \arrow[r, "(\epsilon{,} p\cdot)"]& \mathbb Z \arrow[r]& 0.
       \end{tikzcd}};
        \end{tikzpicture}
\end{equation*}
We can take the dual sequence of $\Gamma$-lattices:
\begin{equation*}
        \begin{tikzpicture}[baseline= (a).base]
        \node[scale=1] (a) at (0,0){
        \begin{tikzcd}
      0 \arrow[r]&  \mathbb Z\arrow[r, "(\Delta{,} p\cdot)"]& \bigoplus_{i=1}^{p+1} \mathbb Z[\Gamma/\Lambda_i] \oplus \mathbb Z \arrow[r]& \widehat F^0 \arrow[r]& 0.
       \end{tikzcd}};
        \end{tikzpicture}
\end{equation*}
By Lemma \ref{esatta}, the $\Gamma$-module $\widehat F^0$ is coflasque and thus $F$ is a flasque torus.
\end{proof}

For certain specific fields $K$ we can completely determine $\Hh^1(K,F)$. Let us assume that $K=k((t))$ for some field $k$ of characteristic $0$. We begin with a lemma that summarises well known results on central simple algebras.

\begin{lemma}
\label{accorpamento}
With the above notations, the following properties hold: 
\begin{itemize}
    \item There is a split exact sequence
    \begin{equation*}
        \begin{tikzpicture}[baseline= (a).base]
        \node[scale=1] (a) at (0,0){
        \begin{tikzcd}
      0 \arrow[r]&  \emph{Br}(k)\arrow[r]& \emph{Br}(K) \arrow[r]& \Hom_{\emph{cont}}(\Gamma_{k}, \mathbb Q/\mathbb Z) \arrow[r]\arrow[l, bend left=30, start anchor={[xshift=-6.5ex]}]& 0,
       \end{tikzcd}};
        \end{tikzpicture}
    \end{equation*}
    where the first map is the scalar extension, and the section associates the cyclic algebra $(\chi,t)$ to a continuous character $\chi:\Gamma_{k}\rightarrow \mathbb Q /\mathbb Z$.
    \item The period of the cyclic algebra $(\chi,t)$ coincides with the cardinality of the (finite) image of $\chi$.
\end{itemize}
\end{lemma}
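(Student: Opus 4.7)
The plan is to recognise this as a form of Witt's theorem describing the Brauer group of a complete discretely valued field in residue characteristic zero. I would first construct the exact sequence using the Hochschild--Serre spectral sequence attached to $1\to I\to \Gamma_K\to \Gamma_k\to 1$, where $I$ denotes the absolute inertia. Since $\chara(k)=0$ the wild inertia vanishes, and Lang's $C_1$ theorem yields $\text{Br}(K^{\text{ur}})=0$ because the residue field of $K^{\text{ur}}$ is algebraically closed of characteristic zero. Thus $\text{Br}(K)\simeq \Hh^2(\Gamma_k,(K^{\text{ur}})^*)$. Combining the valuation short exact sequence $1\to \mathcal{O}^*_{K^{\text{ur}}}\to (K^{\text{ur}})^*\to \mathbb{Z}\to 0$ with the cohomological triviality of the principal units $1+t\overline{k}[[t]]$ (which are uniquely $\mathbb{Z}$-divisible via the logarithm in characteristic zero, hence $\Gamma_k$-cohomologically trivial) produces the long exact sequence
$$0\longrightarrow \Hh^2(\Gamma_k,\overline{k}^*)\longrightarrow \Hh^2(\Gamma_k,(K^{\text{ur}})^*)\xrightarrow{\partial}\Hh^2(\Gamma_k,\mathbb{Z})\longrightarrow 0,$$
which is the desired sequence once one identifies $\Hh^2(\Gamma_k,\overline{k}^*)=\text{Br}(k)$ and $\Hh^2(\Gamma_k,\mathbb{Z})\simeq \Hom_{\text{cont}}(\Gamma_k,\mathbb{Q}/\mathbb{Z})$ via the connecting map of $0\to\mathbb{Z}\to\mathbb{Q}\to\mathbb{Q}/\mathbb{Z}\to 0$ (using that $\mathbb{Q}$ is uniquely divisible).

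For the splitting, given a continuous character $\chi:\Gamma_k\to \mathbb{Q}/\mathbb{Z}$ of order $n$, I pull it back to $\Gamma_K$ and let $L/K$ be the associated cyclic extension of degree $n$. Since $\chi$ factors through $\Gamma_k$, we have $L=l((t))$ for the corresponding extension $l/k$, so $L/K$ is unramified. The cyclic algebra $(\chi,t)$ is then $L\langle y\rangle/(y^n-t)$ with $y\cdot x=\chi(\sigma)(x)\cdot y$ for $x\in L$ and $\sigma$ a chosen generator of $\text{Gal}(L/K)$. A direct cocycle computation, using that $v_K(t)=1$ and $L/K$ is unramified, gives $\partial(\chi,t)=\chi$; the formula is standard, see \cite[Proposition 6.8.7]{gille_szamuely_2006}. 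As $\chi\mapsto (\chi,t)$ is additive in $\chi$ by the bilinearity of the cup product, this defines a group-theoretic section of $\partial$.

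For the period statement, writing $n$ for the cardinality of the image of $\chi$, the relation $n\chi=0$ forces $n\cdot[(\chi,t)]=0$, so the period of $(\chi,t)$ divides $n$. Conversely, since $\partial(\chi,t)=\chi$ has order $n$ in $\Hom_{\text{cont}}(\Gamma_k,\mathbb{Q}/\mathbb{Z})$ and $\partial$ is a group homomorphism, the period of $(\chi,t)$ is at least $n$; this yields equality. The main technical obstacles are the two inputs from characteristic zero, namely $\text{Br}(K^{\text{ur}})=0$ via Lang's theorem and the cohomological triviality of the principal units via the logarithm; both are classical and I would invoke them from the literature rather than reprove, leaving the essential content of the proof to be the explicit residue computation of $\partial(\chi,t)$.
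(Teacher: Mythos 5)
Your argument is correct, but it takes a genuinely different route from the paper's. For the exact sequence, the paper simply cites Witt's theorem in the form of \cite[Corollary 6.3.5 and Remark 6.3.6]{gille_szamuely_2006}, whereas you reprove it via Hochschild--Serre, Lang's theorem and the cohomological triviality of the principal units --- essentially reproducing the standard proof of the cited result (two small points of hygiene: the principal units of $\mathcal O_{K^{\mathrm{ur}}}$ are $\bigcup_{l}(1+t\,l[[t]])$, the union over finite subextensions $k\subset l$, rather than $1+t\overline k[[t]]$; and surjectivity of the residue map in your short exact sequence needs either the $\Gamma_k$-equivariant splitting $1\mapsto t$ of the valuation sequence or your later verification that $\chi\mapsto(\chi,t)$ is a section). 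For the period statement the difference is more substantive: the paper computes the norm group $N_{k'((t))/K}(k'((t))^*)$ explicitly, observes that the powers of $t$ lying in it are exactly the $t^{ni}$, and concludes by \cite[Corollary 4.7.5]{gille_szamuely_2006}; you instead deduce the period purely formally from the residue formula $\partial((\chi,t))=\chi$ together with additivity of $\chi\mapsto(\chi,t)$ in $\chi$. Your derivation is shorter and exhibits the period statement as a formal consequence of the splitting (the residue formula is precisely the content of the cited Remark 6.3.6, so this is not circular), while the paper's norm computation is elementary and self-contained at the level of Laurent series and does not use the residue map at all; the precise reference number 6.8.7 you give may not match the edition cited in the paper, but the formula you invoke is standard and is exactly what the paper's citation supplies.
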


\begin{proof}
The first item is the combination of \cite[Corollary 6.3.5]{gille_szamuely_2006} and \cite[Remark 6.3.6]{gille_szamuely_2006}. We denote by $n$, the cardinality of the image of $\chi$. Let $k\subset k'$ be a cyclic finite Galois extension of degree $n$ that induces an isomorphism $\text{Gal}(k'/k)\rightarrow \mathbb Z/ n \mathbb Z$. We can determine the powers of $t$ that are contained within $N_{k'((t))/k}(k'((t))^*)$; they are precisely $t^{ni}$ for $i \in \mathbb Z$. To see this, consider an element $r=at^i\sum_{j=0}^{\infty} a_jt^j$, where $a_j \in k'$ and $a_0=1$, in $k'((t))$. Then,
$$N(r)=N(a)t^{ni}\prod_{\sigma \in \text{Gal}(k'/k)}\left(\sum_{j=0}^{\infty} \sigma(a_j)t^j\right)=N(a)t^{ni}(1+\dots).$$
By \cite[Corollary 4.7.5]{gille_szamuely_2006}, we can conclude that the period of $(\chi, t)$ is $n$. 
\end{proof}

We assume that $k$ contains a primitive $p$-th root of unity. Let $k\subset k'$ be a Galois extension of degree $p$ and let $\chi: \text{Gal}(k'/k)\rightarrow \mathbb Z/ p \mathbb Z$ be an isomorphism.

\begin{lemma}
\label{lemmasplit}
Using the above notations, consider a central simple algebra $A$ over $K$. If $A$ is split by $K(t^{1/p})$ and by $k'((t))$, then $A$ is Brauer equivalent to a power of the cyclic algebra $(\chi,t)$.  
\end{lemma}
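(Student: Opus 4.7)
The plan is to decompose $[A]\in \text{Br}(K)$ via the split exact sequence of Lemma \ref{accorpamento}, writing it uniquely as $[A]=[A_0]+(\psi,t)$ with $[A_0]\in \text{Br}(k)$ and $\psi:\Gamma_k\to \mathbb{Q}/\mathbb{Z}$ a continuous character, and then to translate each splitting hypothesis into a constraint on the pair $(A_0,\psi)$.

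First, I would set $s:=t^{1/p}$, so that $K(t^{1/p})=k((s))$ and Lemma \ref{accorpamento} applies equally well to $\text{Br}(k((s)))$ with uniformizer $s$. Base-changing along $K\hookrightarrow k((s))$ preserves the summand $[A_0]$, while the bimultiplicative identity $(\psi,s^p)=p\cdot(\psi,s)=(p\psi,s)$ for cyclic algebras shows that $(\psi,t)$ is carried to $(p\psi,s)$. By uniqueness of the Witt decomposition of $\text{Br}(k((s)))$, the vanishing of $[A\otimes_K k((s))]$ forces simultaneously $[A_0]=0$ in $\text{Br}(k)$ and $p\psi=0$ in $\Hom_{\text{cont}}(\Gamma_k,\mathbb{Q}/\mathbb{Z})$; in particular $\psi$ takes values in $\tfrac{1}{p}\mathbb{Z}/\mathbb{Z}$. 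Next, for the base change $K\hookrightarrow k'((t))$, the Brauer summand becomes $[A_0\otimes_k k']$ (which is automatically zero by the previous step) while $(\psi,t)$ is carried to $(\psi|_{\Gamma_{k'}},t)$, so the splitting hypothesis forces $\psi|_{\Gamma_{k'}}=0$, that is, $\psi$ factors through $\gal(k'/k)$.

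Combining these constraints, $\psi$ is a homomorphism $\gal(k'/k)\to \tfrac{1}{p}\mathbb{Z}/\mathbb{Z}$; since $\chi$ is an isomorphism $\gal(k'/k)\xrightarrow{\sim} \mathbb{Z}/p\mathbb{Z}\simeq \tfrac{1}{p}\mathbb{Z}/\mathbb{Z}$, I obtain $\psi=i\chi$ for some $i\in\mathbb{Z}$. Substituting back yields $[A]=(i\chi,t)=i\cdot (\chi,t)$, as required. The only mildly delicate step is justifying the Brauer-equivalence $(\psi,s^p)=(p\psi,s)$ and matching it against the Witt decomposition relative to the new uniformizer $s$; once this compatibility is pinned down, the rest is a direct application of uniqueness of the decomposition together with the elementary classification of homomorphisms from a cyclic group of order $p$ into $\tfrac{1}{p}\mathbb{Z}/\mathbb{Z}$.
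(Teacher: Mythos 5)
Your proof is correct and takes essentially the same route as the paper: decompose $[A]$ via the split exact sequence of Lemma \ref{accorpamento}, use the splitting over $K(t^{1/p})=k((t^{1/p}))$ to kill the constant summand (and the $p$-torsion condition on the character), and use the splitting over $k'((t))$ to force the character to factor through $\mathrm{Gal}(k'/k)$, hence to be a multiple of $\chi$. The only cosmetic difference is that where you argue by uniqueness of the decomposition with respect to the new uniformizer $s=t^{1/p}$, the paper instead uses injectivity of $\mathrm{Br}(k)\rightarrow \mathrm{Br}(K(t^{1/p}))$ together with the period formula of Lemma \ref{accorpamento} and a short contradiction argument.
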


\begin{proof}
By Lemma \ref{accorpamento}, we can deduce that there exists a central simple algebra 
$C$ defined over $k$ and a continuous character $\psi: \Gamma_{k}\rightarrow \mathbb Q / \mathbb Z$ such that $A$ is Brauer equivalent to the tensor product $(C\otimes_{k} K) \otimes_{K} (\psi, t) $. Furthermore, the condition for $A$ to be split over a field extension is equivalent to both $C\otimes_{k}K $ and $(\psi,t)$ being split over the same field extension. Since $A$ is split over a degree $p$ extension, its order must divide $p$, and the same holds for $(\psi,t)$. By Lemma \ref{accorpamento}, the extension map $\text{Br}(k)\rightarrow \text{Br}(K(t^{1/p}))$ is an embedding. In particular $C\otimes_{k}K(t^{1/p})$ is split if and only if $C$ is split. It remains to demonstrate that $(\psi,t)$ is a power of $(\chi,t)$. In other words, we need to show that the field extension associated with $\psi$ is contained within the field extension associated with $\chi$. Suppose that this is not true, then by Lemma \ref{accorpamento} the field extension associated to $\psi$ has degree $p$, and the composition of the two field extensions has degree $p^2$. The algebra $(\psi,t)\otimes_{K}k'((t))$ is isomorphic to $(\psi', t)$, where $\psi':\Gamma_{k'}\rightarrow \mathbb Q/ \mathbb Z$ is a non-trivial character. However, Lemma \ref{accorpamento} implies that the cyclic algebra $(\psi',t)$ has order $p$. Consequently, it cannot be split, contradicting the initial hypothesis of the statement.
\end{proof}

Let $K'$ be the field $k'((t^{1/p}))$. The field extension $K\subset K'$ is Galois of group $(\mathbb Z/p \mathbb Z)^2$. The next lemma constructs `nice' splitting fields of the chosen cyclic algebra $(\chi,t)$.

\begin{lemma}
\label{annullamento}
Assume that $p\neq 2$. Then, the $K$-cyclic algebra $(\chi, t)$ splits over all degree $p$ subextensions of $K\subset k'((t^{1/p}))$.
\end{lemma}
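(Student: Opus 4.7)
The plan is to invoke Kummer theory together with the symbol algebra formalism, and reduce the statement to a short computation with Steinberg-type relations.

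Since $k$ contains a primitive $p$-th root of unity $\zeta$, Kummer theory lets me write $k' = k(a^{1/p})$ for some $a \in k^*$. Then $K' = k'((t^{1/p})) = K(a^{1/p}, t^{1/p})$ is a Kummer $(\mathbb{Z}/p\mathbb{Z})^2$-extension of $K$, and its $p+1$ degree $p$ subextensions correspond to the $\mathbb{F}_p$-lines in $\langle \bar a, \bar t\rangle \subset K^*/K^{*p}$, giving explicitly $K(a^{1/p}) = k'((t))$, $K(t^{1/p}) = k((t^{1/p}))$, and $L_i := K((a^i t)^{1/p})$ for $i = 1, \ldots, p-1$. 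Using $\zeta$ to match $\chi$ with the Kummer class of $a$, the cyclic algebra $(\chi, t)$ represents the same class in $\text{Br}(K)[p]$ as the symbol algebra $(a, t)_p$.

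Splitting over $k'((t))$ and $k((t^{1/p}))$ is then immediate: in the first case $a$ becomes a $p$-th power, in the second $t$ becomes a $p$-th power, so $(a, t)$ vanishes in either. The substance of the proof is the case $L = L_i$ with $1 \le i \le p-1$. Fix $s \in L$ with $s^p = a^i t$, so that $t = a^{-i} s^p$ in $L^*$. Bilinearity of the symbol yields
\[
(a, t)_L = (a, a^{-i})_L + (a, s^p)_L = -i(a, a)_L + p(a, s)_L = -i(a, a)_L
\]
in $\text{Br}(L)[p]$. The Steinberg relation $(a, -a) = 0$ gives $(a, a) = (a, -1)$, which is $2$-torsion; since $p \neq 2$, it is zero in $\text{Br}(L)[p]$. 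Hence $i \cdot (a, t)_L = 0$, and as $i$ is invertible modulo $p$, this forces $(a, t)_L = 0$.

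The only genuine obstacle is the bookkeeping of the setup in the first two paragraphs---identifying the cyclic algebra $(\chi, t)$ of the statement with the symbol $(a, t)_p$ in $\text{Br}(K)[p]$ via Kummer theory, and verifying that the explicit list above exhausts the degree $p$ subextensions of $K'/K$. Once these identifications are in place, the rest is a routine Milnor symbol calculation, crucially exploiting the hypothesis $p \neq 2$ to eliminate the $(a, -1)$ contribution.
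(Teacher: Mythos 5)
Your proposal is correct and follows essentially the same route as the paper: Kummer theory to identify the $p+1$ degree-$p$ subextensions, identification of $(\chi,t)$ with a symbol algebra via the $p$-th root of unity, and a symbol computation in which the hypothesis $p\neq 2$ kills the term $(a,a)=(a,-1)$ (the paper writes $-1=(-1)^p$ instead of invoking $2$-torsion, which is the same trick). The only blemish is the final phrasing ``hence $i\cdot(a,t)_L=0$'': your displayed computation already gives $(a,t)_L=-i(a,a)_L=0$ directly, so the invertibility of $i$ is not needed.
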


\begin{proof}
The extension $K\subset k'((t^{1/p}))$ is Galois with group $(\mathbb Z/ p\mathbb Z)^2$. By Kummer's Theorem (see \cite[Theorem 6.2]{lang02}), there exists $a \in k'$ such that $a^p\in k$ and $k'=k(a)$. By Kummer theory, all the intermediate subextensions of $K\subset k'((t^{1/p}))=K(a, t^{1/p})$ are of the form $K_{[i,j]}\coloneqq K(a^i t^{j/p})$ for $[i,j]\in \mathbb P^1(\mathbb Z/p\mathbb Z)$. Fix a primitive $p$-th root of unity $\omega$. By \cite[Corollary 2.5.5]{gille_szamuely_2006}, there is an isomorphism of $K$-algebras $(a^p,t)_{\omega}\simeq (\chi,t)$. By \cite[Proposition 4.7.1]{gille_szamuely_2006}, there is a functorial morphism (it is an isomorphism by Merkurjev-Suslin Theorem, but it not necessary for this proof):
$$K^M_2(K)/p{\longrightarrow}  \Hh^2(K, \mu_p^{\otimes 2})=\Hh^2(K, \mu_p)$$
that sends the symbol $\{a^p,t\}$ to the class of cyclic algebra $(a^p, t)_{\omega}$. We prove the statement by working with symbols. There are the following equalities over $K$: 
$$\{a^p, a^{ip}t^{j}\}= \{a^p, a^{ip}\}+ \{a^p, t^j\}=i\{a^p, a^{p}\}+ j\{a^p, t\} = i\{a^p, -1\}+ j\{a^p, t\}=$$
$$=i\{a^p, (-1)^p\}+ j\{a^p, t\}=ip\{a^p, -1\}+ j\{a^p, t\}=j\{a^p, t\},$$
If $j$ is coprime to $p$, then there exist $m$ and $n$ such that $nj+mp=1$, and so:
$$\{a^p, t\}= nj\{a^p, t\}+mp\{a^p,t\}=n\{a^p, a^{ip}t^{j}\}.$$
If we extend the scalar to $K_{[i,j]}$, then we get:
$$\{a^p, t\}=n\{a^p, a^{ip}t^{j}\}=np\{a^p, a^{i}t^{j/p}\}=0.$$
If $[i,j]= [i,0]$, then $K_{[i,0]}= k'((t))$ and thus $\{a^p, t\}= p\{a,t\}=0$.
\end{proof}

\begin{oss}
\label{p2}
A couple of comments about the case $p=2$:
\begin{itemize}
    \item If we assume that $k$ contains a primitive $4$-th root of unity $\zeta$, then Lemma \ref{annullamento} holds also for $p=2$. The proof is the same, except for the equality $\{a^p,-1\}=\{a^p,(-1)^p \}$, which needs to be replaced with $\{a^p,-1 \}=\{a^p, {\zeta}^p\}$. 
    \item On the contrary, if $-1$ is not a square in $k$, then the lemma is no longer true. For example we can take $k=\mathbb R$ and $k'=\mathbb C$. If we choose $(-1,t)$ as cyclic algebra over $ K$, then $K((-t)^{1/2})$ does not split $(-1,t)$. Indeed over $K((-t)^{1/2})$ the algebra $(-1,t)$ is isomorphic to $(-1,-1)$. Now, it is not difficult to show that $(-1,-1)$ is a division algebra over $K((-t)^{1/2})$, for example (see \cite[Proposition 1.1.7]{gille_szamuely_2006}) because the equation $x^2+y^2+z^2+w^2=0$ has only the trivial solution over $K((-t)^{1/2})=\mathbb R(((-t)^{1/2}))$.
\end{itemize}
\end{oss}

We can consider a flasque torus $F$ as in Proposition \ref{piatto}. In the following proposition, we will determinate the cohomology group $\Hh^1(K,F)$.

\begin{prop}
\label{pz}
Given the fields as described above, the group $\Hh^1(K, F)$ is either trivial or isomorphic to $\mathbb Z/p\mathbb Z$. In particular, if $p\neq 2$, then $\Hh^1(K,F)$ is isomorphic to $\mathbb Z/p\mathbb Z$.
\end{prop}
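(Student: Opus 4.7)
The plan is to compute $\Hh^1(K,F)$ directly from the defining exact sequence
\[1 \to \mu_p \to \prod_{i=1}^{p+1} R_{K_i/K}(\mathbb G_{m,K_i}) \to F \to 1\]
by taking Galois cohomology. By Shapiro's lemma and Hilbert 90, $\Hh^1(K, R_{K_i/K}(\mathbb G_{m,K_i})) = \Hh^1(K_i, \mathbb G_m) = 0$, so the long exact sequence yields an injection
\[0 \to \Hh^1(K, F) \to \Hh^2(K, \mu_p) \to \bigoplus_{i=1}^{p+1} \Hh^2(K_i, \mu_p).\]
Composing with $\Hh^2(-,\mu_p) \hookrightarrow \mathrm{Br}(-)$ identifies $\Hh^1(K, F)$ with the subgroup of $\mathrm{Br}(K)[p]$ consisting of classes split by every $K_i$.

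Next, I would observe that among the $p+1$ degree-$p$ subextensions $K_i = K_{[i,j]}$ of $K \subset K' = k'((t^{1/p}))$ described in Lemma \ref{annullamento}, two of them are precisely $K(t^{1/p}) = K_{[0,1]}$ and $k'((t)) = K_{[1,0]}$. Hence any class in $\Hh^1(K, F)$ is, in particular, a $p$-torsion element of $\mathrm{Br}(K)$ split by both $K(t^{1/p})$ and $k'((t))$. By Lemma \ref{lemmasplit}, such a class is Brauer equivalent to a power of the cyclic algebra $(\chi, t)$. Combined with Lemma \ref{accorpamento}, which gives that $(\chi, t)$ has period exactly $p$, this shows that $\Hh^1(K, F)$ is a subgroup of the cyclic group $\langle (\chi, t)\rangle$ of order $p$, so it is either trivial or isomorphic to $\mathbb Z/p\mathbb Z$.

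For the second statement, assume $p \neq 2$. Lemma \ref{annullamento} then applies and guarantees that $(\chi, t)$ is split by every $K_{[i,j]}$, i.e.\ by every $K_i$. Therefore $(\chi, t)$ itself belongs to $\Hh^1(K, F)$, forcing the inclusion $\langle (\chi, t)\rangle \subset \Hh^1(K, F)$. Together with the reverse inclusion from the previous paragraph, we conclude $\Hh^1(K, F) \simeq \mathbb Z/p\mathbb Z$.

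I expect no serious obstacle here: the argument is essentially bookkeeping with the long exact sequence, combined with a clean application of Lemmas \ref{accorpamento}, \ref{lemmasplit} and \ref{annullamento}. The only mildly subtle point is the identification of $\Hh^1(K, F)$ with the subgroup of $\mathrm{Br}(K)[p]$ split by all the $K_i$, which relies on verifying that the connecting map $\Hh^1(K, F) \to \Hh^2(K, \mu_p)$ corresponds (after composing with the embedding into $\mathrm{Br}(K)$) to the restriction maps $\Hh^2(K, \mu_p) \to \Hh^2(K_i, \mu_p)$ via Shapiro's isomorphism; this is standard functoriality of the connecting morphism.
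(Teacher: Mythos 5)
Your proposal is correct and follows essentially the same route as the paper: take the long exact cohomology sequence of the defining extension, use Shapiro plus Hilbert 90 to identify $\Hh^1(K,F)$ with the $p$-torsion Brauer classes split by all the $K_i$, invoke Lemma \ref{lemmasplit} to bound this by $\langle(\chi,t)\rangle$, and use Lemma \ref{annullamento} to get nontriviality when $p\neq 2$. Your explicit remark that $K(t^{1/p})$ and $k'((t))$ occur among the $K_i$ only spells out a step the paper leaves implicit.
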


\begin{proof}
We have the following exact sequence of cohomology groups:
\begin{equation*}
        \begin{tikzpicture}[baseline= (a).base]
        \node[scale=0.99] (a) at (0,0){
        \begin{tikzcd}
        \Hh^1\left(K, \prod_{i=1}^{p+1} R_{K_i/K}(\mathbb G_{m,K_i})\right) \arrow[r] & \Hh^1(K,F) \arrow[r] & \Hh^2(K,\mu_p)\arrow[r] & \Hh^2\left(K, \prod_{i=1}^{p+1} R_{K_i/K}(\mathbb G_{m,K_i})\right). 
       \end{tikzcd}};
        \end{tikzpicture}
\end{equation*}
The first group is trivial by Shapiro's Lemma and Hilbert `90. The group $\Hh^2(K,\mu_p)$ is the $p$-torsion of the Brauer group, and the group $\Hh^2\left(K, \prod_{i=1}^{p+1} R_{K_i/K}(\mathbb G_{m,K_i})\right)$ is isomorphic to $ \prod_{i=1}^{p+1}\text{Br}(K_i)$. Then, $\Hh^1(K,F)$ is the group of division algebras of order dividing $p$ that are split over all the $K_i$. Let $A$ be a central simple algebra of order dividing $p$ over $K$, which also splits over all the field extensions $K_i$. By Lemma \ref{lemmasplit}, such an algebra is Brauer equivalent to a power of the cyclic algebra $(\chi,t)$. This implies that $\Hh^1(K,F)$ is a subgroup of the group $\guilsinglleft(\chi,t)\guilsinglright\simeq \mathbb Z/p\mathbb Z$. Therefore, it can be either trivial or isomorphic to $\mathbb Z/p\mathbb Z$. If $p\neq 2$, then $(\chi,t)$ splits over all the extensions $K_i$ by Lemma \ref{annullamento}, so $\Hh^1(K,F)$ is not trivial.
\end{proof}

\begin{oss}
If we assume that $k$ contains a primitive $4$-th root of unity, then in Proposition \ref{pz}, even for $p=2$, we get that $\Hh^1(K,F)$ is isomorphic to $\mathbb Z/2\mathbb Z$, in view of Remark \ref{p2}. 
\end{oss}

\subsection{Conclusion}

In this subsection, we prove the exact sequence of Proposition \ref{piatto} verifies the conditions of the beginning of Section \ref{localt}. This will finish the construction of the homogeneous space. In the first part, we only assume that $K$ is the field $k((t))$, for some characteristic $0$ field $k$. We assume that $k$ contains a primitive $p$-th root of unity, for some prime $p\neq 2$.

\begin{prop}
\label{quasifine}
Assume that $k$ has $p$-cohomological dimension at most $2$. Let $A$ be a central simple algebra of period $p$ over $K$, such that the Brauer class of $A$ does not belong to the image of $\emph{Br}(k)\rightarrow \emph{Br}(K)$. Then, there exists a cyclic algebra $(\chi,t)$ of period $p$ over $K$ satisfying the following:
\begin{itemize}
    \item Given the connecting map $\delta:\Hh^2(K,\mu_p)\rightarrow \Hh^3(K,\mu_p) $, induced by the Bockstein exact sequence, there is an identity $\delta([A])=\delta ([\chi,t])$.
    \item The Brauer class of $(\chi,t)$ belongs to the image of the connecting map $\Hh^1(K,F)\rightarrow \Hh^2(K,\mu_p)$ induced by an extension like the one in Proposition \ref{piatto}.    
\end{itemize}
\end{prop}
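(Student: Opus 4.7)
The plan is to split the Brauer class of $A$ using the direct-sum decomposition of Lemma \ref{accorpamento}, use the non-trivial character component to build the cyclic algebra, and verify both conclusions separately.

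By Lemma \ref{accorpamento}, I can write $[A] = [C\otimes_k K] + [(\chi,t)]$ for a central simple algebra $C$ over $k$ and a continuous character $\chi:\Gamma_k\to \mathbb Q/\mathbb Z$. The hypothesis that $[A]$ does not lie in the image of $\text{Br}(k)\to \text{Br}(K)$ forces $\chi$ to be non-trivial. Applying the projection $\text{Br}(K)\to \Hom_{\text{cont}}(\Gamma_k,\mathbb Q/\mathbb Z)$ to $p[A]=0$ gives $p\chi = 0$, so $\chi$ has order exactly $p$; consequently $[(\chi,t)]$ is $p$-torsion, which combined with injectivity of $\text{Br}(k)\to\text{Br}(K)$ forces $[C]$ to have period dividing $p$ in $\text{Br}(k)$.

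I then take $k'$ to be the cyclic degree-$p$ extension of $k$ cut out by $\chi$, and set $K' = k'((t^{1/p}))$. This is a $(\mathbb Z/p\mathbb Z)^2$-Galois extension of $K$ of the precise type considered in Proposition \ref{piatto}, which provides a flasque torus $F$ fitting in the advertised exact sequence. As recalled in the proof of Proposition \ref{pz}, the image of the connecting map $\Hh^1(K,F)\to \Hh^2(K,\mu_p)$ consists exactly of those classes in $\text{Br}(K)[p]$ that are split by every degree-$p$ subextension $K_i$ of $K\subset K'$. Since $p\neq 2$, Lemma \ref{annullamento} asserts precisely that the cyclic algebra $(\chi,t)$ is split by every such $K_i$, so $[(\chi,t)]$ lies in this image; this gives the second bullet.

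For the first bullet I would establish that $\delta([C\otimes_k K]) = 0$, so that $\delta([A]) = \delta([(\chi,t)])$. By naturality of the Bockstein connecting map under base change, $\delta([C\otimes_k K])$ is the restriction to $K$ of $\delta([C])\in \Hh^3(k,\mu_p)$. Since $[C]$ is $p$-torsion it lives in the $p$-primary part of the Brauer group, so $\delta([C])$ sits in the $p$-primary summand of $\Hh^3(k,-)$, which vanishes by the hypothesis $\text{cd}_p(k)\leq 2$. The most delicate step, in my view, will be the bookkeeping in the first paragraph: cleanly extracting that $\chi$ has order exactly $p$ and that $[C]$ is $p$-torsion from the two hypotheses on $[A]$ (period $p$ and not coming from $k$), since everything else is a direct appeal to the preceding lemmas.
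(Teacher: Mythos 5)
Your proposal is correct and follows essentially the same route as the paper: the same decomposition $[A]=[C\otimes_k K]+[(\chi,t)]$ via Lemma \ref{accorpamento}, the same use of Lemma \ref{annullamento} and the exact sequence from Propositions \ref{piatto}--\ref{pz} to place $[(\chi,t)]$ in the image of the connecting map. The only (harmless) difference is in the first bullet, where you invoke naturality of the Bockstein under restriction together with $\Hh^3(k,\mu_p)=0$, while the paper equivalently lifts $[C]$ to $\Hh^2(k,\mu_{p^2})$ using surjectivity of the $p$-power map and cancels it inside $\text{Br}(K)$; both arguments rest on $\text{cd}_p(k)\leq 2$ in the same way.
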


\begin{proof}
According to Lemma \ref{accorpamento}, there exists a central simple algebra $C$ defined over $k$ and a continuous character $\chi: \Gamma_{k}\rightarrow \mathbb Q / \mathbb Z$ such that $A=[(C\otimes_{k} K) \otimes_{K} (\chi, t) ]$. Furthermore, since $A$ has period $p$ and its Brauer class is not in the image of $\text{Br}(k)\rightarrow \text{Br}(K)$, the algebra $(\chi,t)$ has period $p$ and $C\otimes_{k}K$ has period dividing $p$. The Bockstein exact sequence induces the following surjective morphism:
\begin{equation*}
        \begin{tikzpicture}[baseline= (a).base]
        \node[scale=1] (a) at (0,0){
        \begin{tikzcd}
     \Hh^2(k,\mu_{p^2}) \arrow[r, "(-)^p"] & \Hh^2(k,\mu_{p}) \arrow[r, "\delta"] & \Hh^3(k,\mu_{p})=0
       \end{tikzcd}};
        \end{tikzpicture}
\end{equation*}
because $k$ has $p$-cohomological dimension at most $2$. Consequently, there exists a central simple algebra $D$ of period $p^2$ over $k$, such that $[D]^{-p}=C$, hence 
$$\Bigl[A\otimes_K (D\otimes_{k}K)^{\otimes p}\Bigr]=\Bigl[(\chi,t)\Bigr]$$
and $\delta([\chi,t])=\delta([A])$. By Lemma \ref{accorpamento}, the map $\chi$ factors trough an isomorphism $\text{Gal}(k'/k)\rightarrow \mathbb Z / p \mathbb Z$ for some cyclic degree $p$ Galois extensions $k \subset k'$. By Lemma \ref{annullamento}, all degree $p$ field subextensions of $K\subset k'((t^{1/p}))$ splits $(\chi,t)$. Let $\{K_i\}_{i=1}^{p+1}$ denote the family of degree $p$ subextensions of $K\subset k'((t^{1/p}))$. We consider the following exact sequence
\begin{equation*}
        \begin{tikzpicture}[baseline= (a).base]
        \node[scale=1] (a) at (0,0){
        \begin{tikzcd}
      1 \arrow[r] & \mu_p \arrow[r] & \prod_{i=1}^{p+1} R_{K_i/K}(\mathbb G_{m,K_i}) \arrow[r] & F \arrow[r] & 1
       \end{tikzcd}};
        \end{tikzpicture}
\end{equation*}
as in Proposition \ref{piatto}. By Lemma \ref{annullamento}, the Brauer class of $[(\chi,t)]$ is in the image of the connecting map in view of the following exact sequence:
\begin{equation*}
        \begin{tikzpicture}[baseline= (a).base]
        \node[scale=1] (a) at (0,0){
        \begin{tikzcd}
      \Hh^1(K,F)\arrow[r] & \Hh^2(K,\mu_p)\arrow[r] & \prod_{i=1}^{p+1} \text{Br}(K_i).
       \end{tikzcd}};
        \end{tikzpicture}
\end{equation*}
This completes the proof.
\end{proof}

We can conclude the construction. Assume that $k$ is a local field and that $k$ does not contain a primitive $p^2$-th root of unity. By Lemma \ref{pulizia}, we can choose an element in $\Hh^2(K,\mu_p)$ that is not mapped to zero by the connecting map induced by the Bockstein sequence. By Proposition \ref{quasifine}, there is a cyclic algebra that is also not mapped to zero and it is in the image of the connecting map induced by an extension like the one in Proposition \ref{piatto}. By Proposition \ref{pz}, the group $\Hh^1(K,F)$ is isomorphic to $\mathbb Z/p\mathbb Z$. We have covered all the requested properties, thus we have just proved Theorem \ref{thm2}.

\printbibliography[
heading=bibintoc,
title={References}
]

@misc{article,
    title={Splitting families in Galois cohomology},
    author={Cyril Demarche and Mathieu Florence},
    year={2017},
    eprint={1711.06585},
    archivePrefix={arXiv},
    primaryClass={math.AG}
}

@book {MR1269324,
    AUTHOR = {Weibel, Charles A.},
     TITLE = {An introduction to homological algebra},
    SERIES = {Cambridge Studies in Advanced Mathematics},
    VOLUME = {38},
 PUBLISHER = {Cambridge University Press, Cambridge},
      YEAR = {1994},
     PAGES = {xiv+450},
      ISBN = {0-521-43500-5; 0-521-55987-1},
   MRCLASS = {18-01 (16-01 17-01 20-01 55Uxx)},
  MRNUMBER = {1269324},
MRREVIEWER = {Kenneth A. Brown},
       DOI = {10.1017/CBO9781139644136},
       URL = {https://doi.org/10.1017/CBO9781139644136},
}

@book {Affinegroup,
    AUTHOR = {Waterhouse, William C.},
     TITLE = {Introduction to affine group schemes},
    SERIES = {Graduate Texts in Mathematics},
    VOLUME = {66},
 PUBLISHER = {Springer-Verlag, New York-Berlin},
      YEAR = {1979},
     PAGES = {xi+164},
      ISBN = {0-387-90421-2},
   MRCLASS = {14-01 (14Lxx 20G99)},
  MRNUMBER = {547117},
MRREVIEWER = {M. Kh. Gizatullin},
}

@book {milne2006,
    AUTHOR = {Milne, James S.},
     TITLE = {Arithmetic duality theorems},
   EDITION = {Second},
 PUBLISHER = {BookSurge, LLC, Charleston, SC},
      YEAR = {2006},
     PAGES = {viii+339},
      ISBN = {1-4196-4274-X},
   MRCLASS = {14F20 (11G25 11G35 11R34 12G10)},
  MRNUMBER = {2261462},
}

@book {gille_szamuely_2006,
    AUTHOR = {Gille, Philippe and Szamuely, Tam\'{a}s},
     TITLE = {Central simple algebras and {G}alois cohomology},
    SERIES = {Cambridge Studies in Advanced Mathematics},
    VOLUME = {165},
      NOTE = {Second edition of [ MR2266528]},
 PUBLISHER = {Cambridge University Press, Cambridge},
      YEAR = {2017},
     PAGES = {xi+417},
      ISBN = {978-1-316-60988-0; 978-1-107-15637-1},
   MRCLASS = {16K20 (14C35 14F22 19C30)},
  MRNUMBER = {3727161},
}

@article {MR2404747,
    AUTHOR = {Colliot-Th\'{e}l\`ene, Jean-Louis},
     TITLE = {R\'{e}solutions flasques des groupes lin\'{e}aires connexes},
   JOURNAL = {J. Reine Angew. Math.},
  FJOURNAL = {Journal f\"{u}r die Reine und Angewandte Mathematik. [Crelle's
              Journal]},
    VOLUME = {618},
      YEAR = {2008},
     PAGES = {77--133},
      ISSN = {0075-4102},
   MRCLASS = {11E72 (12G05 20G15)},
  MRNUMBER = {2404747},
MRREVIEWER = {B. Sury},
       DOI = {10.1515/CRELLE.2008.034},
       URL = {https://doi.org/10.1515/CRELLE.2008.034},
}

@article {MR2237268,
    AUTHOR = {Colliot-Th\'{e}l\`ene, Jean-Louis and Kunyavski\u{i}, Boris \`E.},
     TITLE = {Groupe de {P}icard et groupe de {B}rauer des compactifications
              lisses d'espaces homog\`enes},
   JOURNAL = {J. Algebraic Geom.},
  FJOURNAL = {Journal of Algebraic Geometry},
    VOLUME = {15},
      YEAR = {2006},
    NUMBER = {4},
     PAGES = {733--752},
      ISSN = {1056-3911},
   MRCLASS = {11E72 (14F22 20G10)},
  MRNUMBER = {2237268},
MRREVIEWER = {Tam\'{a}s Szamuely},
       DOI = {10.1090/S1056-3911-06-00427-9},
       URL = {https://doi.org/10.1090/S1056-3911-06-00427-9},
}

@article {sketch,
    AUTHOR = {Colliot-Th\'{e}l\`ene, Jean-Louis and Sansuc, Jean-Jacques},
     TITLE = {La descente sur les vari\'{e}t\'{e}s rationnelles. {II}},
   JOURNAL = {Duke Math. J.},
  FJOURNAL = {Duke Mathematical Journal},
    VOLUME = {54},
      YEAR = {1987},
    NUMBER = {2},
     PAGES = {375--492},
      ISSN = {0012-7094},
   MRCLASS = {11G35 (14G25)},
  MRNUMBER = {899402},
MRREVIEWER = {Daniel Coray},
       DOI = {10.1215/S0012-7094-87-05420-2},
       URL = {https://doi.org/10.1215/S0012-7094-87-05420-2},
}

@article {requiv,
    AUTHOR = {Colliot-Th\'{e}l\`ene, Jean-Louis and Sansuc, Jean-Jacques},
     TITLE = {La {$R$}-\'{e}quivalence sur les tores},
   JOURNAL = {Ann. Sci. \'{E}cole Norm. Sup. (4)},
  FJOURNAL = {Annales Scientifiques de l'\'{E}cole Normale Sup\'{e}rieure. Quatri\`eme
              S\'{e}rie},
    VOLUME = {10},
      YEAR = {1977},
    NUMBER = {2},
     PAGES = {175--229},
      ISSN = {0012-9593},
   MRCLASS = {14G05 (12F20)},
  MRNUMBER = {450280},
MRREVIEWER = {Daniel Coray},
       URL = {http://www.numdam.org/item?id=ASENS_1977_4_10_2_175_0},
}

@article {MR878473,
    AUTHOR = {Colliot-Th\'{e}l\`ene, Jean-Louis and Sansuc, Jean-Jacques},
     TITLE = {Principal homogeneous spaces under flasque tori: applications},
   JOURNAL = {J. Algebra},
  FJOURNAL = {Journal of Algebra},
    VOLUME = {106},
      YEAR = {1987},
    NUMBER = {1},
     PAGES = {148--205},
      ISSN = {0021-8693},
   MRCLASS = {14L40 (11E04 11E81 11G99 12E15 12F99 14D25 16A39)},
  MRNUMBER = {878473},
MRREVIEWER = {David J. Saltman},
       DOI = {10.1016/0021-8693(87)90026-3},
       URL = {https://doi.org/10.1016/0021-8693(87)90026-3},
}

@book {MR833513,
    AUTHOR = {Manin, Yuri I.},
     TITLE = {Cubic forms},
    SERIES = {North-Holland Mathematical Library},
    VOLUME = {4},
   EDITION = {Second},
      NOTE = {Algebra, geometry, arithmetic,
              Translated from the Russian by M. Hazewinkel},
 PUBLISHER = {North-Holland Publishing Co., Amsterdam},
      YEAR = {1986},
     PAGES = {x+326},
      ISBN = {0-444-87823-8},
   MRCLASS = {11Gxx (14Gxx 14J20)},
  MRNUMBER = {833513},
}

@article {MR0199184,
    AUTHOR = {Hironaka, Heisuke},
     TITLE = {Resolution of singularities of an algebraic variety over a
              field of characteristic zero. {I}, {II}},
   JOURNAL = {Ann. of Math. (2) {\bf 79} (1964), 109--203; ibid. (2)},
  FJOURNAL = {Annals of Mathematics. Second Series},
    VOLUME = {79},
      YEAR = {1964},
     PAGES = {205--326},
      ISSN = {0003-486X},
   MRCLASS = {14.18},
  MRNUMBER = {0199184},
MRREVIEWER = {J. Giraud},
       DOI = {10.2307/1970547},
       URL = {https://doi.org/10.2307/1970547},
}

@article {MR2054399,
    AUTHOR = {Borovoi, Mikhail V. and Kunyavski\u{i}, Boris \`E. and Gille, Philippe},
     TITLE = {Arithmetical birational invariants of linear algebraic groups
              over two-dimensional geometric fields},
   JOURNAL = {J. Algebra},
  FJOURNAL = {Journal of Algebra},
    VOLUME = {276},
      YEAR = {2004},
    NUMBER = {1},
     PAGES = {292--339},
      ISSN = {0021-8693},
   MRCLASS = {11E72 (14L40 20G25 20G30)},
  MRNUMBER = {2054399},
MRREVIEWER = {B. Sury},
       DOI = {10.1016/j.jalgebra.2003.10.024},
       URL = {https://doi.org/10.1016/j.jalgebra.2003.10.024},
}

@article {MR2034644,
    AUTHOR = {Colliot-Th\'{e}l\`ene, Jean-Louis and Gille, Philippe and Parimala, Raman},
     TITLE = {Arithmetic of linear algebraic groups over 2-dimensional
              geometric fields},
   JOURNAL = {Duke Math. J.},
  FJOURNAL = {Duke Mathematical Journal},
    VOLUME = {121},
      YEAR = {2004},
    NUMBER = {2},
     PAGES = {285--341},
      ISSN = {0012-7094},
   MRCLASS = {11E76 (14G35 20G35)},
  MRNUMBER = {2034644},
       DOI = {10.1215/S0012-7094-04-12124-4},
       URL = {https://doi.org/10.1215/S0012-7094-04-12124-4},
}

@book {MR3972198,
    AUTHOR = {Gille, Philippe},
     TITLE = {Groupes alg\'{e}briques semi-simples en dimension cohomologique
              {$\le$} 2},
    SERIES = {Lecture Notes in Mathematics},
    VOLUME = {2238},
      NOTE = {With a comprehensive introduction in English},
 PUBLISHER = {Springer, Cham},
      YEAR = {[2019] \copyright 2019},
     PAGES = {xxii+167},
      ISBN = {978-3-030-17271-8; 978-3-030-17272-5},
   MRCLASS = {20G10 (12G05 14L15 20G15)},
  MRNUMBER = {3972198},
       DOI = {10.1007/978-3-030-17272-5},
       URL = {https://doi.org/10.1007/978-3-030-17272-5},
}

@article {MR1618404,
    AUTHOR = {Panin, Ivan A.},
     TITLE = {Splitting principle and {$K$}-theory of simply connected
              semisimple algebraic groups},
   JOURNAL = {Algebra i Analiz},
  FJOURNAL = {Rossi\u{\i}skaya Akademiya Nauk. Algebra i Analiz},
    VOLUME = {10},
      YEAR = {1998},
    NUMBER = {1},
     PAGES = {88--131},
      ISSN = {0234-0852},
   MRCLASS = {19E08 (13A20 16K20 20G15)},
  MRNUMBER = {1618404},
MRREVIEWER = {B. Sury},
}

@article {MR3055773,
    AUTHOR = {Merkurjev, Alexander S.},
     TITLE = {Essential dimension: a survey},
   JOURNAL = {Transform. Groups},
  FJOURNAL = {Transformation Groups},
    VOLUME = {18},
      YEAR = {2013},
    NUMBER = {2},
     PAGES = {415--481},
      ISSN = {1083-4362},
   MRCLASS = {14L30 (12Fxx)},
  MRNUMBER = {3055773},
MRREVIEWER = {Zinovy Reichstein},
       DOI = {10.1007/s00031-013-9216-y},
       URL = {https://doi.org/10.1007/s00031-013-9216-y},
}

@article {MR2393078,
    AUTHOR = {Karpenko, Nikita A. and Merkurjev, Alexander S.},
     TITLE = {Essential dimension of finite {$p$}-groups},
   JOURNAL = {Invent. Math.},
  FJOURNAL = {Inventiones Mathematicae},
    VOLUME = {172},
      YEAR = {2008},
    NUMBER = {3},
     PAGES = {491--508},
      ISSN = {0020-9910},
   MRCLASS = {12F10 (14C35 16K20)},
  MRNUMBER = {2393078},
MRREVIEWER = {Zinovy Reichstein},
       DOI = {10.1007/s00222-007-0106-6},
       URL = {https://doi.org/10.1007/s00222-007-0106-6},
}

@article {MR3039772,
    AUTHOR = {L\"{o}tscher, Roland and MacDonald, Mark and Meyer, Aurel and Reichstein, Zinovy},
     TITLE = {Essential dimension of algebraic tori},
   JOURNAL = {J. Reine Angew. Math.},
  FJOURNAL = {Journal f\"{u}r die Reine und Angewandte Mathematik. [Crelle's
              Journal]},
    VOLUME = {677},
      YEAR = {2013},
     PAGES = {1--13},
      ISSN = {0075-4102},
   MRCLASS = {14L15 (11E72 20G15)},
  MRNUMBER = {3039772},
MRREVIEWER = {H. H. Andersen},
       DOI = {10.1515/crelle.2012.010},
       URL = {https://doi.org/10.1515/crelle.2012.010},
}

@book {MR3729270,
    AUTHOR = {Milne, James S.},
     TITLE = {Algebraic groups},
    SERIES = {Cambridge Studies in Advanced Mathematics},
    VOLUME = {170},
      NOTE = {The theory of group schemes of finite type over a field},
 PUBLISHER = {Cambridge University Press, Cambridge},
      YEAR = {2017},
     PAGES = {xvi+644},
      ISBN = {978-1-107-16748-3},
   MRCLASS = {14L15 (14-01 17B45 20-01 20G15)},
  MRNUMBER = {3729270},
MRREVIEWER = {Boris \`E. Kunyavski\u{\i}},
       DOI = {10.1017/9781316711736},
       URL = {https://doi.org/10.1017/9781316711736},
}

@book {MR0302656,
    AUTHOR = {Demazure, Michel and Gabriel, Pierre},
     TITLE = {Groupes alg\'{e}briques. {T}ome {I}: {G}\'{e}om\'{e}trie alg\'{e}brique,
              g\'{e}n\'{e}ralit\'{e}s, groupes commutatifs},
      NOTE = {Avec un appendice {{\i}t Corps de classes local} par Michiel
              Hazewinkel},
 PUBLISHER = {Masson \& Cie, \'{E}diteurs, Paris; North-Holland Publishing Co.,
              Amsterdam},
      YEAR = {1970},
     PAGES = {xxvi+700},
   MRCLASS = {14L15 (20G35)},
  MRNUMBER = {0302656},
MRREVIEWER = {J.-E. Bertin},
}

@article {MR1622783,
    AUTHOR = {Ducros, Antoine},
     TITLE = {Dimension cohomologique et points rationnels sur les courbes},
   JOURNAL = {J. Algebra},
  FJOURNAL = {Journal of Algebra},
    VOLUME = {203},
      YEAR = {1998},
    NUMBER = {2},
     PAGES = {349--354},
      ISSN = {0021-8693},
   MRCLASS = {14G05 (12G10)},
  MRNUMBER = {1622783},
       DOI = {10.1006/jabr.1997.7330},
       URL = {https://doi.org/10.1006/jabr.1997.7330},
}

@book{robinson1996course,
  title={A Course in the Theory of Groups},
  author={Robinson, Derek J. S.},
  isbn={9780387944616},
  lccn={95004025},
  series={Graduate Texts in Mathematics},
  url={https://books.google.fr/books?id=lqyCjUFY6WAC},
  year={1996},
  publisher={Springer New York}
}

@article{10.3792/pjaa.54.250,
author = {Kazuya Kato},
title = {{A generalization of local class field theory by using $K$-groups, II}},
volume = {54},
journal = {Proceedings of the Japan Academy, Series A, Mathematical Sciences},
number = {8},
publisher = {The Japan Academy},
pages = {250 -- 255},
year = {1978},
doi = {10.3792/pjaa.54.250},
URL = {https://doi.org/10.3792/pjaa.54.250}
}

@book{borovoi1998abelian,
  title={Abelian Galois Cohomology of Reductive Groups},
  author={Borovoi, Mikhail V.},
  isbn={9780821806500},
  lccn={97047116},
  series={American Mathematical Society: Memoirs of the American Mathematical Society},
  url={https://books.google.fr/books?id=44bTCQAAQBAJ},
  year={1998},
  publisher={American Mathematical Society}
}

@book{AST_1996__239__R1_0,
     author = {Verdier, Jean-Louis},
     editor = {Maltisiniotis Georges},
     title = {Des cat\'egories d\'eriv\'ees des cat\'egories ab\'eliennes},
     series = {Ast\'erisque},
     publisher = {Soci\'et\'e math\'ematique de France},
     number = {239},
     year = {1996},
     zbl = {0882.18010},
     mrnumber = {1453167},
     language = {fr},
     url = {http://www.numdam.org/item/AST_1996__239__R1_0/}
}

@inproceedings{Borovoi1992NonabelianHO,
  title={Non-abelian hypercohomology of a group with coefficients in a crossed module, and Galois cohomology},
  author={Mikhail V. Borovoi}, 
  year={1992},
address = {\url{www.tau.ac.il/~borovoi/papers/nonab.pdf}}
}

@book{Serre:1993,
  added-at = {2011-07-20T15:28:42.000+0200},
  author = {Serre, Jean-Pierre},
  biburl = {https://www.bibsonomy.org/bibtex/2b6175b7ae057e7b7187d5f067328645b/algebradresden},
  groups = {public},
  interhash = {3d050f2d79bcfb0669092d89237c2c9b},
  intrahash = {9540528e82d908b587109b3b7a0aa47e},
  keywords = {ganter handbib slub klassische-algebra regal16},
  publisher = {Springer},
  timestamp = {2013-04-26T12:52:18.000+0200},
  title = {A course in arithmetic},
  username = {algebradresden},
  year = 1993
}

@article {MR0262251,
    AUTHOR = {Voskresenski\u{i}, Valentin E.},
     TITLE = {Birational properties of linear algebraic groups},
   JOURNAL = {Izv. Akad. Nauk SSSR Ser. Mat.},
  FJOURNAL = {Izvestiya Akademii Nauk SSSR. Seriya Matematicheskaya},
    VOLUME = {34},
      YEAR = {1970},
     PAGES = {3--19},
      ISSN = {0373-2436},
   MRCLASS = {14.50},
  MRNUMBER = {0262251},
MRREVIEWER = {Andrzej Bialynicki-Birula},
}

@book {MR1634406,
    AUTHOR = {Voskresenski\u{i}, Valentin E.},
     TITLE = {Algebraic groups and their birational invariants},
    SERIES = {Translations of Mathematical Monographs},
    VOLUME = {179},
      NOTE = {Translated from the Russian manuscript by Boris Kunyavski
              [Boris \`E. Kunyavski\u{\i}]},
 PUBLISHER = {American Mathematical Society, Providence, RI},
      YEAR = {1998},
     PAGES = {xiv+218},
      ISBN = {0-8218-0905-9},
   MRCLASS = {20G30 (11E72 14G05 14G25)},
  MRNUMBER = {1634406},
MRREVIEWER = {Philippe Gille},
       DOI = {10.1090/mmono/179},
       URL = {https://doi.org/10.1090/mmono/179},
}

@book{lang02,
  abstract = {This book is intended as a basic text for a one-year course in Algebra at the graduate level, or as a useful reference for mathematicians and professionals who use higher-level algebra. It successfully addresses the basic concepts of algebra. For the revised third edition, the author has added exercises and made numerous corrections to the text. Comments on Serge Lang's Algebra: Lang's Algebra changed the way graduate algebra is taught, retaining classical topics but introducing language and ways of thinking from category theory and homological algebra. It has affected all subsequent graduate-level algebra books. April 1999 Notices of the AMS, announcing that the author was awarded the Leroy P. Steele Prize for Mathematical Exposition for his many mathematics books. The author has an impressive knack for presenting the important and interesting ideas of algebra in just the "right" way, and he never gets bogged down in the dry formalism which pervades some parts of algebra. MathSciNet's review of the first edition.},
  added-at = {2015-09-12T14:13:44.000+0200},
  address = {New York, NY},
  author = {Lang, Serge},
  biburl = {https://www.bibsonomy.org/bibtex/2a29d0a9dddd526d758757cd99a8609fc/ytyoun},
  interhash = {2575569df400d3a9e85814b5e2397c11},
  intrahash = {a29d0a9dddd526d758757cd99a8609fc},
  isbn = {9781461300410 146130041X},
  keywords = {textbook},
  publisher = {Springer},
  refid = {852792828},
  timestamp = {2015-09-12T14:14:10.000+0200},
  title = {Algebra},
  year = 2002
}

@article {MR2067129,
    AUTHOR = {Gille, Philippe},
     TITLE = {Sp\'{e}cialisation de la {$R$}-\'{e}quivalence pour les
              groupes r\'{e}ductifs},
   JOURNAL = {Trans. Amer. Math. Soc.},
  FJOURNAL = {Transactions of the American Mathematical Society},
    VOLUME = {356},
      YEAR = {2004},
    NUMBER = {11},
     PAGES = {4465--4474},
      ISSN = {0002-9947,1088-6850},
   MRCLASS = {20G15 (14L40)},
  MRNUMBER = {2067129},
MRREVIEWER = {Tihomir\ A.\ Petrov},
       DOI = {10.1090/S0002-9947-04-03443-9},
       URL = {https://doi.org/10.1090/S0002-9947-04-03443-9},
}

@article {MR0364203,
    AUTHOR = {End{o}, Shizuo and Miyata, Takehiko},
     TITLE = {On a classification of the function fields of algebraic tori},
   JOURNAL = {Nagoya Math. J.},
  FJOURNAL = {Nagoya Mathematical Journal},
    VOLUME = {56},
      YEAR = {1975},
     PAGES = {85--104},
      ISSN = {0027-7630,2152-6842},
   MRCLASS = {12F10 (14H05)},
  MRNUMBER = {364203},
MRREVIEWER = {T.\ Ono},
       URL = {http://projecteuclid.org/euclid.nmj/1118795274},
}

@book{finto,
  address = {In preparation},
  author = {Pirani, Mattia},
  title = {Introduction to flasque quasi-resolutions}
}

@book {MR0077480,
    AUTHOR = {Cartan, Henri and Eilenberg, Samuel},
     TITLE = {Homological algebra},
 PUBLISHER = {Princeton University Press, Princeton, NJ},
      YEAR = {1956},
     PAGES = {xv+390},
   MRCLASS = {09.0X},
  MRNUMBER = {77480},
MRREVIEWER = {G.\ Hochschild},
}

@book {MR1324577,
    AUTHOR = {Serre, Jean-Pierre},
     TITLE = {Cohomologie galoisienne},
    SERIES = {Lecture Notes in Mathematics},
    VOLUME = {5},
   EDITION = {Fifth},
 PUBLISHER = {Springer-Verlag, Berlin},
      YEAR = {1994},
     PAGES = {x+181},
      ISBN = {3-540-58002-6},
   MRCLASS = {12G05 (11R34)},
  MRNUMBER = {1324577},
       DOI = {10.1007/BFb0108758},
       URL = {https://doi.org/10.1007/BFb0108758},
}

\Addresses

\end{document}